\DeclarePairedDelimiter{\ceil}{\lceil}{\rceil}
\newcommand*\circled[2][1.6]{\tikz[baseline=(char.base)]{
    \node[shape=circle, draw, inner sep=1pt, 
        minimum height={\f@size*#1},] (char) {#2};}}
\newcommand{\deff}[1]{\textbf{\textit{#1}}}
\theoremstyle{definition}
\newtheorem{thm}{Theorem}[section]
\newtheorem{defn}[thm]{Definition}
\newtheorem{cor}[thm]{Corollary}
\newtheorem{prop}[thm]{Proposition}
\newtheorem{lemma}[thm]{Lemma}
\newtheorem{rmk}[thm]{Remark}
\newtheorem{eg}[thm]{Example}
\newcommand{\NN}{\mathbb{N}}
\DeclareMathOperator{\dom}{dom}
\title{Games with backtracking options corresponding to the ordinal analysis of $PA$}
\author{Eitetsu KEN}
\begin{document}

\maketitle

\begin{abstract}
We give another proof of ordinal analysis of $I\Sigma_{k}$-fragments of Peano Arithmetic which is free from cut-elimination of $\omega$-logic.
Our main tool is a direct witnessing argument utilizing game notion, motivated from the realm of proof complexity and bounded arithmetic.
\end{abstract}

\section{Introduction}
In \cite{FirstGentzenConsistencyProof}, Gentzen initiated so-called ordinal analysis of arithmetic.
In modern terminology, he determined proof-theoretical ordinals of $PA$.
The results include unprovability of transfinite induction up to $\epsilon_{0}$ in $PA$. 
(See also \cite{DetailedGentzenConsistencyProof} and \cite{TheCollectedPapersofGentzen}.)
One initial way to look at this was a corollary of Gentzen's consistency proof of $PA$ and G\"{o}del's second incompleteness theorem; $PA$ cannot prove transfinite induction up to $\epsilon_{0}$ since $PA$ could prove the consistency of itself otherwise.
In his latter paper \cite{DirectIndependenceofTI}, Gentzen gave a direct independence proof for it, a proof without using incompleteness theorem.
It was a modification of his consistency proof of $PA$; the argument known as cut-elimination of $\omega$-logic these days.
The ideas already sufficed to accomplish the ordinal analysis of $I\Sigma_{k}$-fragments of $PA$ introduced by \cite{fragmentsofarithmetic}.

On the other hand, game notion is a main tool in proof complexity.
Pebble game has been successful to characterize resolution width (\cite{ResolutionPebble}) and provability in $T^{1}_{2}(R)$ (\cite{reslogtonarrowresol}).
The technique can be regarded as a variation of the proof of completeness of (free-)cut-free sequent calculus, describing ways to climb up a given proof falsifying the formulae appearing in them. 

This witnessing technique was also shown to be useful to characterize provably total recursive functions in $I\Sigma_{k}$-fragments of $PA$ and their ordinal analysis (\cite{firstwitnessing} and \cite{Bussordinalanalysis}).
However, the arguments in this context have been treating only formulae having just one nontrivial block of $\exists$-quantifiers.
Here, by ``nontrivial block,'' we mean the block remaining after our modding out the part of formula which can be decided by oracles in particular settings.
For example, the proof of Theorem 7 in \cite{Bussordinalanalysis} and the proof of Theorem 5 in \cite{Buss}.
This suffices because the transfinite induction for $\Sigma_{n-1}$-formulae on $\omega_{m}$ is $\Pi_{n+1}$-conservative over the transfinite induction for $\Sigma_{n-2}$-formulae on $\omega_{m+1}$ (here, the base theory is $I\Delta_{0}$), and we can utilize it to reduce the quantifier complexity in concern. 
See \S 3.3 of \cite{Bussordinalanalysis} for details.
 
Viewing a number of works extending witnessing arguments to the formulae of arbitrary bounded-quantifier complexity in the realm of proof complexity and bounded arithmetics (see the first three sections of \cite{LiOliveira2023} for a comprehensive overview of the history and their game theoretic witnessing), it is natural to formulate a ``direct'' witnessing argument for the ordinal analysis of $PA$ and its fragments. 

In this article, we introduce a game notion $\mathcal{G}_{k}(\prec,h)$ for a given well-order $\prec$ and numbers $k, h \in \NN$. 
We prove that if $I\Sigma_{k}(X)$ proves transfinite induction for $\prec$ and a set $X$ with a (free-)cut-free sequent calculus proof of height $h$ and in free-variable normal form, then one player (called \textbf{Prover}) wins $\mathcal{G}_{k}(\prec,h)$ while the opponent (called \textbf{Delayer}) actually wins if the order type of $\prec$ is larger than an appropriate threshold below $\omega_{k+1}$, which depends on $h$.
The threshold converges from below to $\omega_{k+1}$, and hence the argument gives another proof of ordinal analysis of $I\Sigma_{k}$.
%We also observe that the proof technique can be used to reprove the characterization of provably total computable functions in $I\Sigma_{k}$ and dynamic ordinal analysis given in \cite{DOA}.
An interesting point here is how we obtain $\mathcal{G}_{k}$ for $k \geq 2$; $\mathcal{G}_{k}$ is defined as a game adding backtracking options to $\mathcal{G}_{k-1}$.
The notion can be regarded as a transfinite analogue of the game treated in \cite{GameforT22}, designed for analyzing the independence of the pigeonhole principle for $R$ over the bounded arithmetic $T^{2}_{2}(R)$, and the behavior of $\mathcal{G}_{k}$ we analyze in this article might be helpful for resolving the open questions in the paper.

The article is organized as follows:

\S \ref{Preliminaries} is for setup of notations and conventions.

In \S \ref{A game G1 for ISigma1(X) v.s. TI}, we introduce the game $\mathcal{G}_{1}(\prec,h)$, which is the base case of the whole of our game notion. 
We reprove ordinal analysis of $I\Sigma_{1}$ using $\mathcal{G}_{1}$.
Note that the argument in this section is just a transposition of the contents in \cite{firstwitnessing} and \cite{Bussordinalanalysis}; each formula we treat in this section is either $\Sigma_{1}(X)$, $\Pi_{1}(X)$, a subformula of transfinite induction, or an $X$-free false formula.

In \S \ref{A game Gk for ISigmak(X) v.s. TI}, we introduce the game $\mathcal{G}_{k}(\prec,h)$. 
The meat of the notion is that $\mathcal{G}_{k+1}$ is obtained by allowing \textbf{Prover} in $\mathcal{G}_{k}$ to backtrack a play and change a move in the past, bringing back the information obtained at the current position.
Based on $\mathcal{G}_{k}$, we reprove ordinal analysis of $I\Sigma_{k}$.
%We also observe that the proof technique can be used to understand the characterization of provably total computable functions in $I\Sigma_{k}$ and dynamic ordinal analysis in \S \ref{Dynamic ordinal analysis revisited}.
 
The last section is Appendix, considering the precise threshold of the order type of $|{\prec}|$ where the behavior of $\mathcal{G}_{1}(\prec,h)$ changes.

\section{Preliminaries}\label{Preliminaries}
In this article, we mainly work on first-order logic.
We adopt $\lnot, \land, \lor, \forall, \exists$ as logical symbols and consider only logical formulae of negation normal form.
Given a formula $\varphi$, $\overline{\varphi}$ denotes the canonical negation normal form of $\lnot \varphi$. It is called \deff{the complement of $\varphi$}.
Let $\mathcal{L}_{\NN}$ be the language collecting all the functions and predicates over the standard model $\NN$.
%Let $Th_{\forall}(\NN)$ be the universal $\mathcal{L}_{\NN}$-theory of $\NN$.
In particular, $\mathcal{L}_{\NN}$ includes predicate symbols for the equality $=$ and for the standard ordering $\leq$ of $\NN$, which we also denote by $=$ and $\leq$.
In particular, we can consider bounded formulae in terms of $\leq$.
Let $\Delta_0$ be the class of all the bounded $\mathcal{L}_\NN$-formulae.

For simplicity, in this article, we consider proper and strict versions of the classes $\Sigma_k$ and $\Pi_k$.
A class $p\Sigma_{i}$ is the collection of all the $\mathcal{L}_{\NN}$-formula $\varphi$ having the following form:
\[
\varphi \equiv \underbrace{\exists x_1 \forall x_2 \cdots}_{\mbox{exactly} \ i\  \mbox{-times}}\psi,
\]
 where $\psi$ is a maximal $\Delta_0$-subformula of $\varphi$.
 Similarly, the class $p\Pi_{i}$ is defined, switching the roles of $\exists$ and $\forall$.
 Note that $p\Sigma_{i}$ and $p\Sigma_{i'}$ are disjoint if $i \neq i'$, and similarly for $p\Pi_{i}$'s.
 We say $\varphi$ is $s\Sigma_{i}$ if it is $p\Sigma_{i^{\prime}}$ for some $0 \leq i^{\prime} \leq i$, or it is $p\Pi_{i^{\prime}}$ for some $0 \leq i^{\prime} < i$.
 Similarly for $s\Pi_{i}$.
 
 For a given formula $\varphi(\vec{x})$, its universal closure $\forall \vec{x}. \varphi(\vec{x})$ is denoted by $\forall \forall \varphi$.
For a formula $\varphi(\vec{x})$, an $\mathcal{L}_{\NN}$-structure $\mathcal{M}$, and an assignment $E$ in $\mathcal{M}$ covering all the variables in $\vec{x}$, $(\mathcal{M},E) \models \varphi(\vec{x})$ if and only if $\varphi(\vec{x})$ is true in $\mathcal{M}$ under $E$.

Let $Th(\NN)$ be the set of all the true $\mathcal{L}_{\NN}$-sentences in $\bigcup_{i=0}^{\infty}p\Sigma_{i}$.

We are interested in relativized arithmetics;
let $X$ be a fresh unary predicate symbol, and put $\mathcal{L}_\NN(X) := \mathcal{L}_{\NN} \cup \{X\}$.
Since $X$ is meant to represent an arbitrary subset of $\NN$, we denote $X(x)$ by ``$x \in X$'' for readability.

Let $\Delta_0(X)$ be the class of bounded formulae in the language $\mathcal{L}_{\NN}(X)$. 
The classes $p\Sigma_k(X)$, $p\Pi_k(X)$, $s\Sigma_k(X)$ and $s\Pi_k(X)$ are defined similarly as unrelativized ones.

Given $k \in \NN$, the theory $I\Sigma_k(X)$ is defined as:
\[I\Sigma_k(X) := Th(\NN)+p\Sigma_k(X)\mathchar`-IND.\]
Here, for a set $\Phi$ of $\mathcal{L}_{\NN}$-formulae, \textit{$\Phi$-IND} is the following axiom scheme:
\[\left(\varphi(0) \land \forall x. (\varphi(x) \rightarrow \varphi(x+1))\right) \rightarrow \forall x. \varphi(x).\]
Note that it is an extension of the usual $I\Sigma_k(X)$ although we restricted the definition of $\Sigma_k$ to $p\Sigma_k$ since $\mathcal{L}_\NN$ and $Th(\NN)$ include enough functions and their basic properties to transform general $\Sigma_k(X)$-formulae to $p\Sigma_k(X)$-formulae. 

Towards our descriptions of sequent calculus and a game, we clarify our treatment of \textit{trees}; to be precise, we are interested in edge-labeled rooted trees:
\begin{defn}\label{defofsequence}
 
 For $\sigma=(a_{1},\ldots, a_{h}) \in \omega^{h}$ and $k \geq 1$, set
 \begin{align*}
  \sigma_{k} := \begin{cases}
   a_{k} \quad &(k \leq h) \\
   -1 \quad &(k > h)
  \end{cases}.
 \end{align*}
 %We extend the poset $(\NN, \leq)$ to $(\NN \cup \{-\infty\}, \leq)$ in the following way: for $k \in \NN \cup \{-\infty\}$,
 %\[ -\infty < k \Longleftrightarrow k \in \NN.\]
 
 If $k \leq h$, then we define $\sigma_{\leq k}:=(\sigma_{1},\ldots, \sigma_{k})$.
 
  Given $\sigma \in \omega^{h}$ and $\tau \in \omega^{k}$, $\sigma*\tau$ denotes the concatenation $ (\sigma_{1}, \ldots, \sigma_{h}, \tau_{1}, \ldots, \tau_{k}) \in \omega^{h+k}$.

\end{defn}

\begin{rmk}
 We often identify $\omega$ as $\omega^{1}$ and abuse the notation.
 For example, we write $\sigma*k$ for $\sigma*(k)$.   
\end{rmk}

Note that we have used the ordinal notation $\omega$ instead of $\NN$ in Definition \ref{defofsequence}, although they could be identified as sets.
The purpose is to distinguish their \textit{types} or \textit{roles} in the arguments.
In this article, when we use $\NN$, we consider the standard model of the language $\mathcal{L}_{\NN}$.
For example, when we manipulate the values of closed $\mathcal{L}_{\NN}$-terms, we evaluate them in $\NN$. 
On the other hand, we use ordinals to describe (iterated sequences of) trees. 
For example, when we deal with a proof of sequent calculi, the underlying proof-tree is represented by a set of sequences of (finite) ordinals.

\begin{defn}
We set $\omega^{<\omega}:=\bigcup_{h =0}^{\infty} \omega^{h}$, that is, $\omega^{<\omega}$ is the set of all the finite sequences on $\omega$ (including the empty sequence $\emptyset$).
We equip it with \deff{the lexicographic order}:
For $v, w \in \omega^{<\omega}$,
\begin{align*}
v <_{lex} w : \Longleftrightarrow \exists k \in \omega.\ (v_{\leq k} = w_{\leq k} \ \& \ v_{k+1} < w_{k+1})
\end{align*}
%We fix a feasible bijection between $\omega^{<\omega}$ and $\NN$.
%For $\sigma \in \omega^{<\omega}$, $\ceil{\sigma}$ denotes the number corresponding to $\sigma$. 
\end{defn}

\begin{defn}
 For $\sigma, \tau \in \omega^{<\omega}$, $\sigma \subseteq \tau$ means that $\sigma$ is an initial segment of $\tau$ (or $\tau$ is an extension of $\sigma$).
 
 The length of $\sigma$ is denoted by $height(\sigma)$.
 
\end{defn}

\begin{defn}
\deff{A rooted tree} is a subset $T \subseteq \omega^{<\omega}$ such that:
\begin{enumerate}
\item $T \neq \emptyset$.
 \item $\sigma \subseteq \tau \in T \Longrightarrow \sigma \in T$.
 %\item $\sigma * k \in T, l \leq k \Longrightarrow \sigma*l \in T$.
\end{enumerate}
$\emptyset \in T$ is called \deff{the root of $T$}.

If $\sigma*k \in T$, then we say \deff{$\sigma*k$ is a child of $\sigma$}, and \deff{$\sigma$ is the parent of $\sigma*k$}. 
%We denote $\sigma$ by $P(\sigma*k)$.
If $\sigma \in T$ does not have a child in $T$, that is, $\subseteq$-maximal in $T$, they are called \deff{leaves}.
We denote the set of all the leaves of $T$ by $L(T)$.

If $\sigma*k, \sigma*l \in T$, then they are said to be \deff{siblings}.

When $\sigma, \tau \in T$ satisfy $height(\sigma) = height(\tau)$, then we say \deff{$\sigma$ is left to $\tau$} when $\sigma <_{lex} \tau$.

For a finite tree $T$, put
\[height(T) := \max_{v \in T} height(v).\]
\end{defn}

\begin{eg}
If $\sigma*0 \in T$, then $\sigma*0$ must be the leftmost child of $\sigma$ in $T$.
\end{eg}

\begin{eg}
$height(T) \leq h \Longleftrightarrow T \subseteq \omega^{\leq h}$.
\end{eg}

Intuitively, the games we will present below has a ``snapshot'' $(T,\rho) \in [1,\omega^{h}] \times \mathcal{R}$ ``focusing on'' the current frontier $c(T)$,  where $[1,\omega^{h}], \mathcal{R}$ and $c(T)$ are defined as follows:

\begin{defn}
For ordinals $\alpha \leq \beta$, set $[\alpha,\beta] := \{x \in ON \mid \alpha \leq x \leq \beta\}$.
Similarly for other types of intervals such as $\left[\alpha , \beta\right[$.
\end{defn}

\begin{defn}\label{comb}
Let $h \in \omega$.
For each ordinal $\alpha \leq \omega^{h}$, we associate the following finite tree $T(\alpha)$ with it:
if $\alpha=\omega^{h}$, then set $T(\alpha):=\{\emptyset\}$.
If 
\[\alpha= \omega^{h-1}\cdot c_{h-1} + \cdots + \omega^{0}\cdot c_{0} \quad (c_{i} < \omega),\]
then $T(\alpha)$ is the collection of $\sigma \in \omega^{<\omega}$ such that $height(\sigma) \leq h$ and $\sigma$ is either empty or of the form $\sigma=(0,\cdots, 0, d)$, where $d \leq c_{h-height(\sigma)}$.

Obviously, the map $\alpha \mapsto T(\alpha)$ is injective on $[1,\omega^{h}]$, and its range is the set of isomorphic types of trees of height $\leq h$ in which every parent of a leaf is on the leftmost path and whose leftmost leaf has a sibling.
We call such a tree \deff{comb} (of height $\leq h$).
The inverse mapping is:
\[T \mapsto \sum_{v \in L(T) \setminus\{c(T)\}} \omega^{h-height(v)},\]
where the sum is \deff{the natural sum of base $\omega$}.

Since we want to manipulate the associated tree $T(\alpha)$ much more heavily than the original ordinal $\alpha$, \deff{we identify $\alpha$ and the comb $T(\alpha)$}, and we use symbols $T$ etc. for ordinals in $[1,\omega^{h}]$ (when $h$ is fixed).
\end{defn}

\begin{defn}
For a comb $T$, let $c(T)$ be \deff{the $<_{lex}$-minimal leaf} (``leftmost leaf'').
\end{defn}

\begin{rmk}
For a comb $T$, if $p \in T$ is the parent of a leaf, then $p \subsetneq c(T)$. 
\end{rmk}

\begin{rmk}
    Note that $height(c(T)) = height(T)$.
\end{rmk}

%\begin{defn}
%Let $h \in \NN$.
%$[1,\omega^{h}]$ denotes the set of all the finite trees of height $\leq h$ in which every parent $p$ of a leaf satisfies $p \subsetneq c(T)$.
   %Let $[1,\omega^{h}]_{h} := [1,\omega^{h}] \cap \NN^{\leq h}$.
%\end{defn}

\begin{defn}\label{partialpredicate}
\deff{A partial predicate on $\NN$} is a function $D \rightarrow \{0,1\}$ whose domain $D$ is a subset of $\NN$.
  A partial predicate is \deff{finite} if and only if the domain is finite.
  Let $\mathcal{R}$ be the set of all the finite partial predicates on $\NN$.
  
  For $\rho,\rho' \in \mathcal{R}$, we say \deff{$\rho$ contradicts $\rho'$} when $\rho' \cup \rho$ is not a partial predicate.
  
  For $\rho \in \mathcal{R}$ and $Q \subseteq \NN$, \deff{$\rho$ covers $Q$} if and only if $Q \subseteq \dom(\rho)$.
\end{defn}

Besides, we define the following relatively strong notion of homomorphism to describe some winning strategies in the latter sections: 

\begin{defn}
Let $T_{1},T_{2}$ be trees. A map $h \colon T_{1} \rightarrow T_{2}$ is a \deff{homomorphism} if and only if the following holds:
if $v \in T_{1}$ is a child of $w \in T_{1}$, then $h(v)$ is a child of $h(w)$ in $T_{2}$.
Note that it automatically follows that $h$ preserves the relation $\subseteq$.
\end{defn}

Towards analysis of the game notions given in the following sections, we also set up another notation on ordinals: we denote the class of all the ordinals by $ON$.

\begin{defn}
For $\alpha,\beta \in ON$ and $k < \omega$, $\beta_{k}(\alpha) \in ON$ is inductively defined as follows:
\[\beta_{0}(\alpha) := \alpha, \ \beta_{k+1}(\alpha) := \beta^{\beta_{k}(\alpha)}.\]

\deff{We abbriviate $\omega_{k}(1)$ as $\omega_{k}$}.
\end{defn}

\section{A game $\mathcal{G}_1(h)$ for $I\Sigma_1(X)$ v.s. $TI(\prec)$}\label{A game G1 for ISigma1(X) v.s. TI}
In this section, we consider the base case of game-proof correspondence.
Following \cite{OrdinalAnalysis}, throughout this section, we fix a well-order (but not necessarily primitive recursive) $\prec$ on $\NN$.
Its order-type is denoted by $|{\prec}|$.
Note that there is a predicate symbol representing $\prec$ and $\preceq$ in $\mathcal{L}_\NN$.
We denote them by $\prec$ and $\preceq$, too.

Let $TI(\prec)$ be the following sequent consisting of $s\Sigma_2(X)$-formulae:
\[TI(\prec) :=\left\{ \forall y. \overline{y \in X}, \exists x_0\forall x. ( x_0 \in X \land (\overline{x \in X} \lor \overline{x \prec x_{0}}))\right\}.\]
%Furthermore, let $NMIN(\prec,x_0)$ be the $p\Sigma_1(X)$-formula such that $TI(\prec) \equiv \forall x_0. NMIN(\prec,x_0)$:
%\[NMIN(\prec, x_0) \equiv \exists x. (\lnot x_0 \in X \lor (x \in X \land \lnot x_0 \preceq x)).\]
Note that it is a natural prenex normal form of transfinite induction for $\prec$ and the complement of $X$ (in negation normal form).
%It is just for readability.
\subsection{The game $\mathcal{G}_{1}$ and who wins}\label{The game G1 and who wins}
We begin with the description of the game.
\begin{defn}\label{DefG1}
For a parameter $h\in \omega$,
 $\mathcal{G}_{1}(\prec,h)$ is the following game (for intuitions, see Remark \ref{IntuitiononDefG1}):
  
 \begin{enumerate}
  \item Played by two players. We call them \textit{\textbf{Prover}} and \textit{\textbf{Delayer}}. 
  
  \item \textit{\textbf{A possible position}} is a pair $(T,\rho) \in [1,\omega^{h}] \times \mathcal{R}$ (cf. Definition \ref{comb} and \ref{partialpredicate}).
  Note that we identify the ordinal $T$ and its associated comb in a way of Definition \ref{comb}.
  
  For future convenience, let $\mathcal{P}_{1}$ be \deff{the set of all possible positions}, that is, $\mathcal{P}_{1} := [1,\omega^{h}] \times \mathcal{R}$.

   \item \textbf{Delayer} chooses the \textit{\textbf{initial position}} $(T_{0},\rho_{0})$, where $T_{0}=\omega^{h}$, or the rooted tree of height $0$, that is, consists only of the root $\emptyset$, $\dom(\rho_0)$ is a singleton $\{m_{0}\}$, and $\rho_0(m_{0})=1$.
   \item\label{transitioninG1} Now, we describe transitions between positions together with each player's options and judgment of the winner:
   suppose the current position is $(T,\rho)$.
   
   \begin{enumerate}
   \item\label{query} First, \textbf{Prover} plays a finite subset $Q \subseteq \NN$, and send it to \textbf{Delayer}.   
   \item\label{answer} \textbf{Delayer} plays a finite partial predicate $\rho^\prime$ covering $Q$, and send it back to \textbf{Prover}.
   $\rho'$ must satisfy the following (note that such $\rho'$ exists; for example, a constant function $0$ on $Q$):
   \begin{itemize}
    \item For all $q \in Q \cap (\rho^\prime)^{-1}(1)$, there exists $p \in (\rho^\prime)^{-1}(1)$ such that $p \prec q$.
    \end{itemize}
   If $\rho^\prime \cup \rho$ is not a partial predicate, \textit{(that is, $\rho'$ ``contradicts'' $\rho$,)} then the play ends and \textbf{Prover} wins.
   Otherwise, proceed as follows.
   
   \item\label{nextmove} \textbf{Prover} plays a pair $\langle o,b\rangle$, where $o \in \{0,1\}$ and $b \in \omega^{<\omega}$.
   If $o=0$, $b$ must be in $\omega^{1}$, which is identified as $\omega$.
   If $o=1$, $b$ must satisfy $b \subsetneq c(T)$.
   \item Depending on $o$, the next position is determined as follows:
    
    \begin{enumerate}
     \item If $o=0$, set $T':=T \cup \{c(T)*i \mid i \in [0,b]\}$.
     If $height(T') > h$, the game ends, and \textbf{Prover} loses.
     Otherwise, $(T',\rho'\cup \rho) \in \mathcal{P}_{1}$, and it is the next position.
     
    \item If $o=1$, consider $c(T) = b*k*\sigma$. Note that $\sigma$ may be empty.
    
     \quad If $b*(k+1) \not\in T$, the play ends and \textbf{Prover} loses.
     
     \quad Consider the case when $b*(k+1) \in T$. 
     Note that $b*(k+1)$ is actually a leaf of $T$ by definition of possible positions.
     
     Cut off the descendents of $b*k$ (including itself) and let $T'$ be the resulting tree:
     \[T':=T \setminus \{b*k*\tau \mid \tau \in \omega^{<\omega}\}.\]
     Then $(T',\rho'\cup\rho) \in \mathcal{P}_{1}$, and it is the next position.
     
    \end{enumerate}
    \end{enumerate}
 \end{enumerate}
 In the course of the game, sequences $\vec{P}=(P_{0},\ldots,P_{l})$ are generated, where $P_{0}=(T_{0},\rho_{0})$, and $P_{i+1}=(T',\rho')$ is a next position of $P_{i}=(T, \rho)$.
  Each sequence is called \deff{a play of $\mathcal{G}_{1}(\prec,h)$}.
 Let $\mathcal{S}_{1}$ be \deff{the set of all the plays of $\mathcal{G}_{1}(\prec,h)$}.
\end{defn}

\begin{rmk}\label{IntuitiononDefG1}
The intuitions behind each item of Definition \ref{DefG1} is as follows (the numbering below respects that of the previous definition):
\begin{enumerate}
\item \textbf{Delayer} pretends to have a subset $X \subseteq \NN$ violating $TI(\prec)$, which is of course impossible, and \textbf{Prover} wants to disprove it by querying finitely many data of $X$.
\item $T$ serves as a clock of the game. 
If the height exceeds $h$, the game is over and \textbf{Delayer} wins.
If \textbf{Delayer} comes not to be able to answer anymore before that, \textbf{Prover} wins.
$\rho$ is the record of the answers \textbf{Delayer} made so far.
\item Since \textbf{Delayer} pretends to have $X \subseteq \NN$ which is nonempty but does not have a minimum, \textbf{Delayer} starts the play by claiming ``$m_{0} \in X$,'' assuring $X$ is non-empty.
\item
\begin{enumerate}
 \item $Q$ amounts to \textbf{Prover}'s queries in this particular turn, namely, ``$x \in X$?'' for each $x \in Q$.
 \item $\rho'$ amounts to \textbf{Delayer}'s answers to the previous queries of \textbf{Prover}: $\rho(x)=1$ corresponds to ``$x \in X$'' and $\rho(x)=0$ corresponds to ``$x \not\in X$.''
 
 Furthermore, \textbf{Delayer} also claims ``$m \in X$'' for some $m$ such that $m \prec q$ for every $q \in Q$ with the answer $q \in X$, assuring $X$ violates $TI[\prec]$. 
 \item $o$ denotes the type of \textbf{Prover}'s move, and $b$ designates the precise transformation of $T$ to be excuted.
 \item  
 \begin{enumerate}
  \item If $o=0$, add $(b+1)$-many children of $c(T)$ to $T$, and let $T'$ be the resulting new tree.
  \item If $o=1$, then, we go back to $b$ in $T$ and proceed to the right.

 \end{enumerate}
\end{enumerate}
\end{enumerate}
\end{rmk}
In the rest of this section, we fix a parameter $h$ in order to reduce indices showing dependency on $h$.

\begin{defn}\label{DefO1}
For each position $P=(T,\rho) \in \mathcal{P}_{1}$, let
\begin{align}\label{defofO1}
 O_1(P) := T \leq \omega^{h}.
\end{align}
\end{defn}

%\begin{prop}
 %   For $T,T' \in [1,\omega^{h}]$, if $O_1(T)=O_1(T')$, then $T \simeq T'$ as posets.

  %  Furthermore, for each $\alpha \in [1,\omega^h]$, there exists $T \in [1,\omega^{h}]$ such that $O_1(T)=\alpha$.
%\end{prop}

First, we observe that the game above is determined.

\begin{lemma}\label{clockismonotone}
Let $h \in \omega$.
Suppose $(T',\rho')$ is a next position of $(T,\rho)$.
Then $T' < T$.
\end{lemma}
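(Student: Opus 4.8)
The plan is to reduce the whole statement to a single ordinal invariant of the comb $T$ and to check that each of \textbf{Prover}'s two kinds of move strictly lowers it. Concretely, I would read the ordinal encoded by a comb $T$ directly off its leaves, as in Definition \ref{comb}: it is the natural (Hessenberg) sum of the contributions $\omega^{\,h-height(v)}$ ranging over the leaves $v$ of $T$ (with the degenerate top comb $\{\emptyset\}$ standing for $\omega^{h}$). Since the partial predicate $\rho$ and the order $\prec$ play no role in how the first coordinate is updated, it suffices to work with this sum. Before the case split I would isolate the only arithmetic input needed, namely that the natural sum is strictly monotone in each argument; in particular (i) replacing one summand by a strictly smaller ordinal strictly decreases the total, and (ii) deleting a nonzero summand strictly decreases the total. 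The proof then splits on the component $o \in \{0,1\}$ of \textbf{Prover}'s move $\langle o,b\rangle$.

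In the case $o=0$, the hypothesis that $(T',\rho')$ is a genuine next position (i.e. \textbf{Prover} does not lose) gives $height(T')\le h$, so the frontier leaf $c:=c(T)$ has height $m:=height(c)\le h-1$. The update $T'=T\cup\{c*i \mid i\in[0,b]\}$ promotes $c$ to an interior node and creates the $b+1$ fresh leaves $c*0,\dots,c*b$ at height $m+1$, leaving all other leaves in place. Thus the lone summand $\omega^{\,h-m}$ coming from $c$ is traded for $(b+1)\cdot\omega^{\,h-m-1}$; as $b+1<\omega$ we have $(b+1)\cdot\omega^{\,h-m-1}<\omega^{\,h-m}$, and monotonicity (i) yields $T'<T$.

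In the case $o=1$, write $c(T)=b*k*\sigma$, so that $b*k\subseteq c(T)$ and (since \textbf{Prover} does not lose) $b*(k+1)\in T$. The update removes precisely the subtree below $b*k$, i.e. all $v$ with $b*k\subseteq v$; the parent $b$ still carries the child $b*(k+1)$ and hence remains interior, so no new leaf appears. Therefore $L(T')=L(T)\setminus\{v\in L(T)\mid b*k\subseteq v\}$ is a proper subset of $L(T)$, proper because the frontier $c(T)$ itself is deleted. The ordinal of $T'$ is thus the ordinal of $T$ with one or more nonzero summands removed, so monotonicity (ii) gives $T'<T$.

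The genuine (if elementary) obstacle is the bookkeeping at the two ends of each move: I must confirm that the rewritten tree is again a comb, so that the leaf-sum legitimately names its ordinal, and I must correctly locate the new frontier $c(T')$ and the heights of the leaves that change. Two points deserve care here, both concerning the treatment of the frontier leaf: the degenerate top position $\{\emptyset\}=\omega^{h}$ must be handled separately from the leaf-sum formula, and one must be consistent about whether $c(T)$ is counted in that sum, since the $o=0$ step decreases the ordinal precisely because the height-$m$ frontier is replaced by strictly lower leaves. Once the natural-sum monotonicity facts are fixed, these checks are purely combinatorial, and nothing about $\prec$, $\rho$, or the winning conditions is used beyond the height bound invoked in the $o=0$ case.
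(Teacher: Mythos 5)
Your proof is correct and follows essentially the same route as the paper's: the paper likewise reads $T$ as a natural sum of leaf contributions, observing that an $o=0$ move replaces the frontier's summand $\omega^{h-height(T)}$ by $\omega^{h-height(T)-1}\cdot(b+1)$, and that an $o=1$ move passes to a proper subsummation. Your resolution of the $c(T)$-counting ambiguity (counting all leaves, so that $\{\emptyset\}$ reads as $\omega^{h}$ and the frontier's summand is what gets traded down) is exactly the convention the paper's own proof implicitly adopts, so your added bookkeeping is just a more explicit rendering of the same argument.
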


\begin{proof}
If \textbf{Prover} chose $o=0$ for the transition, then $T'$ is obtained by replacing one $\omega^{h-height(T)}$ in $T$ with $\omega^{h-height(T)-1} \cdot (b+1)$, where $b \in \omega$. (cf. Definition \ref{DefG1} and \ref{DefO1}.)
On the other hand, if \textbf{Prover} chose $o=1$, then $T'$ is a proper subsummation of $T$.

\end{proof}

\begin{cor}\label{G1determined}
For any $h \in \omega$, $\mathcal{G}_{1}(\prec,h)$ ends within finitely many steps, determining the winner.
\end{cor}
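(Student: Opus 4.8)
The plan is to deduce Corollary \ref{G1determined} directly from Lemma \ref{clockismonotone} by exhibiting a well-founded ranking on positions that strictly decreases along every legal transition, so no infinite play is possible.

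First I would observe that by Lemma \ref{clockismonotone}, every transition $(T,\rho) \to (T',\rho')$ satisfies $T' < T$ in the ordinal $[1,\omega^{h}]$ (recall we identify a comb with its associated ordinal via Definition \ref{comb}). Since the initial position $(T_{0},\rho_{0})$ has $T_{0} \leq \omega^{h}$, and since by definition of a possible position every $T$ appearing in a play lies in $[1,\omega^{h}]$, the sequence of clock-values $T_{0} > T_{1} > T_{2} > \cdots$ is a strictly decreasing sequence of ordinals below or equal to $\omega^{h}$. As the ordinals are well-founded, such a sequence must be finite: there can be no infinite play, and in fact the number of transitions is bounded by the ordinal $T_{0}$ itself.

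Next I would argue that when the play halts it does so with a definite winner, by checking that the only ways a play can fail to continue are exactly the terminal conditions in Definition \ref{DefG1}. At any position $(T,\rho)$ \textbf{Prover} must first play a finite $Q$, \textbf{Delayer} must answer with some $\rho'$ covering $Q$ (such $\rho'$ always exists, e.g.\ the constant $0$), and then either $\rho'$ contradicts $\rho$ (\textbf{Prover} wins and the play ends), or \textbf{Prover} plays $\langle o,b\rangle$. In the $o=0$ case the play either ends with $height(T')>h$ (\textbf{Prover} loses) or continues; in the $o=1$ case it either ends because $b*(k+1)\notin T$ (\textbf{Prover} loses) or continues. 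Thus every non-terminal position admits a legal continuation and every terminal position carries an assigned winner, so when the strictly decreasing clock forces termination the winner is determined.

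Since Lemma \ref{clockismonotone} does all the substantive work, I do not expect any real obstacle; the only point requiring a little care is the bookkeeping that the clock-value $T$ genuinely stays inside $[1,\omega^{h}]$ throughout, so that the well-foundedness of ordinals applies and the descending sequence cannot run off the bottom or out of range. This is immediate from the fact that each next position is by construction an element of $\mathcal{P}_{1} = [1,\omega^{h}] \times \mathcal{R}$, together with the losing condition that aborts any move producing $height > h$.
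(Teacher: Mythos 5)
Your proof is correct and follows essentially the same route as the paper: Lemma \ref{clockismonotone} gives a strictly decreasing sequence of ordinals in $[1,\omega^{h}]$, well-foundedness forces termination, and every way a play can halt carries an assigned winner. (Only the parenthetical claim that the number of transitions is ``bounded by the ordinal $T_{0}$'' is vacuous rather than informative when $T_{0}$ is infinite, but nothing in the argument depends on it.)
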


\begin{proof}
When the game ends, it always determines who is the winner. 
Therefore, it suffices to show that the game ends with finitely many transitions, which is an immediate corollary of Lemma \ref{clockismonotone}: if $\mathcal{G}_1(\prec,h)$ transitions from a position $P=(T,\rho)$ to $P'=(T',\rho'\cup\rho)$, then $O_1(P') < O_1(P)$.
\end{proof}

Now, we determine who is the winner depending on $|{\prec}|$.
Towards it, we first clarify the notion of strategies for $\mathcal{G}_{1}$:
below, for a set $S$, we denote the set of all the finite subsets of $S$ by $\mathfrak{P}_{fin}(S)$. 
\begin{defn}
Recall the definition of $\mathcal{S}_{1}$ in Definition \ref{DefG1}.
\deff{A strategy of \textbf{Prover} for $\mathcal{G}_{1}(\prec,h)$} is a pair $(f_{1},f_{2})$ such that:
\begin{itemize}
 \item $f_{1} \colon \mathcal{S}_{1} \rightarrow \mathfrak{P}_{fin}(\NN)$.
 \item $f_{2} \colon \mathcal{S}_{1}\times \mathfrak{P}_{fin}(\NN) \times \mathcal{R} \rightarrow \{0,1\}\times\omega^{<\omega}$.
\end{itemize}

\deff{A strategy of \textbf{Delayer} for $\mathcal{G}_{1}$} is a function $g \colon \mathcal{S}_{1}\times \mathfrak{P}_{fin}(\NN) \rightarrow \mathcal{R}$ with $g(\emptyset, \emptyset) \in \NN$.

$(f_{1},f_{2})$ is \deff{a winning strategy} if and only if \textbf{Prover} wins against any \textbf{Delayer}'s strategy $g$, that is, \textbf{Prover} wins if the play of $\mathcal{G}_{1}(\prec,h)$ is carried out as follows:
\textbf{Delayer} starts the play by claiming $m_{0}:=g(\emptyset, \emptyset)$.
Going through a play $\vec{P}=(P_{0},\ldots,P_{l})$, \textbf{Prover} casts $Q=f_{1}(\vec{P})$ as the next query, \textbf{Delayer} answers $\rho'=g(\vec{P},Q)$, and \textbf{Prover} chooses the option $\langle o,b\rangle = f_{2}(\vec{P},Q,\rho')$.

Similarly for \textbf{Delayer}'s \deff{winning strategy}.
\end{defn}

Now, we shall describe a winning strategy for \textbf{Delayer} when $|{\prec}|$ is large enough:
\begin{prop}\label{DelayerwinsG1}
Let $h > 0$. If $|{\prec}| > \omega^{h+1}+\omega$, then \textbf{Delayer} has a winning strategy for $\mathcal{G}_{1}(\prec,h)$.
\end{prop}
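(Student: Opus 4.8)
The plan is to exhibit an explicit winning strategy for \textbf{Delayer}. By Corollary \ref{G1determined} every play is finite and the winner is declared when it ends, and the only way \textbf{Delayer} can fail to win is to reach a position where the query $Q$ cannot be answered by any $\rho'$ that avoids contradicting $\rho$. Hence it suffices to produce a strategy under which \textbf{Delayer} can \emph{always} respond legally and consistently; under such a strategy the play must terminate with \textbf{Prover} stuck (unable to extend at depth $h$ and unable to backtrack to a right sibling), i.e.\ with \textbf{Prover} losing. So the whole proposition reduces to a ``survival'' statement for \textbf{Delayer}.

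The strategy will be driven by an order-embedding. Since $|{\prec}| > \omega^{h+1} + \omega$, I first fix an order-preserving injection $\iota \colon [0,\omega^{h+1}+\omega] \to (\NN,\prec)$. \textbf{Delayer} then maintains, as a side computation, an assignment of \emph{ranks} to the nodes of the current comb $T$: a function sending each node on the path from the root to $c(T)$ to an ordinal in $[0,\omega^{h+1}+\omega]$ that is strictly decreasing as the node descends, with the root receiving $\omega^{h+1}+\omega$ and the leftmost leaf receiving a positive value. The set $\rho^{-1}(1)$ that \textbf{Delayer} ever commits to consists exactly of the $\iota$-images of these ranks, together with the initial claim $m_0 := \iota(\omega^{h+1}+\omega)$. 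Given a query $Q$, \textbf{Delayer} answers $q \in Q$ by $1$ if $q$ already lies in this yes-set and by $0$ otherwise; whenever some $q = \iota(\text{rank of } v)$ with $v$ strictly above $c(T)$ must be reaffirmed, the witness supplied is $\iota(\text{rank of the child of } v)$, which is $\prec q$ by construction, and when $q$ is the label of $c(T)$ itself a fresh $\iota(\alpha)$ with $\alpha$ below the (positive) rank of $c(T)$ is introduced.

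The heart of the argument is to choose the rank function so that it is an invariant preserved by both transition types of Definition \ref{DefG1} while never exceeding the budget $\omega^{h+1}+\omega$. For an $o=0$ move the leftmost leaf descends one level and \textbf{Delayer} spends fresh room strictly below the current bottom rank; this expenditure is matched against the clock decrease of Lemma \ref{clockismonotone}, in which $\omega^{h-height(T)}$ is replaced by $\omega^{h-height(T)-1}\cdot(b+1)$, so that the new bottom rank can be read off from the old one and stays inside the interval. For an $o=1$ move the tree is replaced by a proper subsummation and \textbf{Delayer} may recover room freed by the discarded subtree; the delicate point is that any witnesses committed earlier on the now-abandoned branch must remain answerable should \textbf{Prover} re-query them, which forces the rank function to reserve, at each level, enough room for all the siblings that may still be tried there. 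Arranging these reservations so that their natural sum over the leftmost path stays below $\omega^{h+1}+\omega$ is what pins the threshold, the extra summand $\omega$ accounting for the top-level descent and for the initial claim $m_0$.

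The step I expect to be the main obstacle is precisely this ordinal bookkeeping: proving that the cumulative demand on $\prec$, taken over all siblings that \textbf{Prover} can force \textbf{Delayer} to try at every level (nested to depth $h$) together with every re-query of an abandoned witness, order-embeds into $[0,\omega^{h+1}+\omega]$ and in particular never exhausts it. Concretely, I would isolate a monovariant $\mu(T) \le \omega^{h+1}+\omega$ attached to each position, show by cases on $o \in \{0,1\}$ that \textbf{Delayer}'s prescribed response keeps every committed witness realizable within the range of $\iota$ while $\mu$ descends in lockstep with the clock $T$, and then conclude that \textbf{Delayer} never runs out of room before \textbf{Prover} is forced into a losing move. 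The strictness $|{\prec}| > \omega^{h+1}+\omega$ (rather than $\ge$) is exactly what guarantees that the top rank $\omega^{h+1}+\omega$ is available with a strictly $\prec$-smaller witness always below the current frontier, which is what keeps the fiction of a $\prec$-set without minimum alive throughout the play.
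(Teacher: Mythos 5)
Your reduction to a survival statement is correct (by Corollary \ref{G1determined} the play is finite, and every terminal transition is a \textbf{Prover} loss unless \textbf{Delayer} contradicts $\rho$), and your overall plan --- order-embed $[0,\omega^{h+1}+\omega]$ into $(\NN,\prec)$ and maintain an invariant that descends in lockstep with the clock --- is the same skeleton as the paper's proof. But there is a genuine gap: the proof's entire content lies in exhibiting the invariant and verifying it survives both transition types, and this is exactly the step you defer (``I would isolate a monovariant $\mu(T)$\dots'', ``the step I expect to be the main obstacle is precisely this ordinal bookkeeping''). Worse, the scheme you do sketch is broken as stated: you attach ranks only to nodes on the path from the root to $c(T)$, with each node's witness being its child's rank. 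After an $o=1$ move, witnesses committed on the abandoned branch remain in $\rho$ forever, and they may be $\prec$-below every rank on the new current path; if \textbf{Prover} re-queries such a witness $q$, \textbf{Delayer} must produce some $p \prec q$ with $p \in (\rho')^{-1}(1)$, and nothing in your rank function provides one. You gesture at ``reservations'' for untried siblings to repair this, but since $b$ in an $o=0$ move is unbounded, each node has potentially $\omega$-many future siblings, and you never define the reservation scheme or prove it fits below $\omega^{h+1}+\omega$.

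The paper closes this gap with a device that makes per-node bookkeeping unnecessary: set $\widetilde{O}_1(P) := \omega \cdot O_1(P)$ (the clock ordinal scaled by $\omega$) and maintain the single global invariant that \emph{every} element of $\rho^{-1}(1)$ is $\succeq \widetilde{O}_1(T)+\omega$. Each round, \textbf{Delayer} answers $0$ to all fresh queries and commits exactly one new witness $m$, chosen fresh from the interval $\left[\widetilde{O}_1(T), \widetilde{O}_1(T)+\omega\right[$ --- nonempty minus $\dom(\rho)$ because $\rho$ is finite and the interval has order type $\omega$. This single $m$ is simultaneously $\prec$-below all previously committed witnesses, so it witnesses every reaffirmed query at once, including re-queries from abandoned branches. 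Preservation of the invariant is then one line from Lemma \ref{clockismonotone}: $T' < T$ gives $\widetilde{O}_1(T) \geq \omega\cdot(O_1(T')+1) = \widetilde{O}_1(T')+\omega$, uniformly in whether \textbf{Prover} played $o=0$ or $o=1$. The initial claim $m_0$ corresponds to the ordinal $\omega^{h+1}+\omega$ itself, which is where the strict inequality $|{\prec}| > \omega^{h+1}+\omega$ is used. Until you supply an invariant of this kind (or carry out your reservation scheme in full), your proposal is a plan rather than a proof.
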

\begin{proof}
First, for $P \in \mathcal{P}_{1}$, set
\[\widetilde{O}_1(P) := \omega \cdot O_{1}(P).\]
Besides, below, we identify the natural numbers and their order types under $\prec$, which are ordinals below $|{\prec}|$.

 The following strategy of \textbf{Delayer} suffices:
 \begin{enumerate}
 \item Choose $\omega^{h+1}+\omega \mapsto 1$ as $\rho_0$ in the initial position.
  \item Suppose a play went through $\vec{P}=(P_{0},\ldots, P_{l})$, the current position $P_{l}$ is $(T,\rho)$, and, for any $x \in \rho^{-1}(1)$, $\widetilde{O}_1(T)+\omega \preceq x$ holds (where the sum in the LHS is the ordinal sum).
  Note that, if $l=0$, then the condition is indeed satisfied.
  
 Given \textbf{Prover}'s query $Q \subseteq \NN$, consider the following $\rho_1 \colon \dom(\rho) \cup Q \rightarrow \{0,1\}$:
 \begin{align*}
  \rho_1(q) := \begin{cases}
   \rho(q) \quad &(\mbox{if $q \in \dom(\rho)$}) \\
  % 1 \quad & (\mbox{if $q \geq \widetilde{O}_1(T)+\NN$}) \\
   0 \quad & (\mbox{otherwise})
  \end{cases}.
 \end{align*}
 Since $\rho_1$ is again finite, we have 
 \[\left[\widetilde{O}_1(T), \widetilde{O}_1(T) + \omega\right[  \setminus \dom(\rho_1) \neq \emptyset.\]
 Let $m$ be its minimum, and answer $\rho':=\rho_1 \cup \{m \mapsto 1\}$.
 \end{enumerate}

 Then \textbf{Delayer} survives this turn, and if $(T',\rho'\cup\rho)$ is the next position, the following holds:
 \[\min \left((\rho'\cup\rho)^{-1}(1)\right) \geq \widetilde{O}_1(T')+\omega.\]
 
 Note that if a play transitions from $P=(T,\rho)$ to $P'=(T',\rho'\cup\rho)$, then 
 \[\widetilde{O}_1(P) \geq \omega \cdot (O_1(P')+1) \geq \widetilde{O}_1(P')+\omega\]
 
 by Lemma \ref{clockismonotone}.
\end{proof}

The above observation is tight; see Proposition \ref{ProverwinsG1}.

\subsection{\textbf{Prover}'s winning strategy extracted from a proof}\label{Prover's winning strategy for G1}

Now, we connect $\mathcal{G}_{1}(\prec,h)$ to the provability of $TI(\prec)$ in $I\Sigma_{1}(X)$.
Towards it, we introduce the following \deff{forcing notion}:
\begin{defn}
Let $\rho$ be a finite partial predicate on $\NN$.
 Let $\varphi(\vec{x},X)$ be a $\Delta_{0}(X)$-formula and $E$ be an assignment of $\vec{x}$ with natural numbers.
 We write \deff{$(E,\rho) \Vdash \varphi$} to denote that any extension $\chi \colon \NN \rightarrow \{0,1\}$ of $\rho$ gives a model $((\NN,\chi),E)$ satisfying $\varphi(\vec{x},X)$.
\end{defn}
\begin{rmk}
 Note that, under $E$, the truth value of $\varphi$ is indeed decided by finitely many data of an interpretation of $X$ since $\varphi$ is $\Delta_0(X)$. 
\end{rmk}

Furthermore, we take a proof-theoretical approach to extract a winning strategy of \textbf{Prover} from a proof of $TI(\prec)$ in Theorem \ref{ProofIsStrategy1}, so we adopt the following version of (free-)cut-free one-sided sequent calculus towards the proof of Theorem \ref{ProofIsStrategy1}:

\begin{defn}\label{sequentcalculus}
\deff{A sequent} is a finite set of first-order formulae. 
Given a sequent $\Gamma$, its \deff{semantic interpretation} is the first-order sentence $\forall \forall \bigvee_{\varphi \in \Gamma}\varphi$. 
Given a structure, $\Gamma$ is said to be \deff{true} if and only if its semantic interpretation is satisfied in the structure.

We often denote sequents of the form 
\[\Gamma_{1} \cup \ldots \cup \Gamma_{k}\cup \{\varphi_{1},\ldots, \varphi_{m}\}\]
 by 
 \[\Gamma_{1}, \ldots, \Gamma_{k}, \varphi_{1},\ldots, \varphi_{m}.\]
\end{defn}

\begin{defn}\label{DefSequentCalculus}
For a term $t$, let $var(t)$ be the set of all the variables occurring in $t$. 
For a formula $\varphi$, let $fv(\varphi)$ be the set of all the free variables occurring in $\varphi$.
For a sequent $\Gamma$, let $fv(\Gamma):=\bigcup_{\varphi \in \Gamma} fv(\varphi)$.
\end{defn}
Now, one-sided sequent-calculus formulation of $I\Sigma_k(X)$ can be considered.
We adopt the conventions given in \cite{OrdinalAnalysis}, that is:
\begin{defn}
Given a sequent $S$ and a pair $\pi=(\tau,\lambda)$, where $\tau$ is a finite tree and $\lambda$ is a map on $\tau$ (i.e. vertex-labeling), $\pi$ is \deff{an $I\Sigma_{k}(X)$-derivation of $S$ in free variable normal form without redundancy} ($\pi \colon I\Sigma_{k}(X) \vdash S$ in short) if and only if the following hold:
\begin{enumerate}
 \item For each $v \in \tau$, $\lambda(v)$ is a sequent.
 \item $\lambda(\emptyset) = S$.
 \item For each $v \in \tau$, $\lambda(v)$ is derived from the labels of its children, that is, $(\lambda(v*k))_{v*k \in \tau}$ by applying one of the following derivation rules:

 \begin{itemize}
 \item Initial Sequent:
 \begin{prooftree}
 \AxiomC{}  \RightLabel{\quad (where $L$ is a literal)}
  \UnaryInfC{$\Gamma, L, \overline{L}$}
\end{prooftree}

 \item $\lor$-Rule:
   \begin{prooftree}
 \AxiomC{$\Gamma, \varphi_{i_{0}}$} \RightLabel{\quad (where $\varphi_{1} \lor \varphi_{2} \in \Gamma$, $i_{0}=1,2$, $\varphi_{i_{0}} \not\in\Gamma$.)}
  \UnaryInfC{$\Gamma$}
 \end{prooftree}

   \item $\exists$-Rule: 
   \begin{prooftree}
 \AxiomC{$\Gamma, \varphi(u)$}
    \UnaryInfC{$\Gamma$}
 \end{prooftree}
(where $\exists x. \varphi(x) \in \Gamma$, $u$ is an $\mathcal{L}_{\NN}$-term, $var(u) \subseteq fv(\Gamma \cup \varphi(u))=fv(\Gamma)$, and $\varphi(u) \not\in\Gamma$.)

% \item $\exists^{\leq}$-Rule; 
 %  \begin{prooftree}
% \AxiomC{$\Gamma, \varphi(u)$}
% \AxiomC{$\Gamma, u \leq t$}
 % \RightLabel{\quad (where $\exists x \leq t. \varphi(x) \in \Gamma$, and $u$ is a term)}
%  \BinaryInfC{$\Gamma$}
% \end{prooftree}
 
 \item $\land$-Rule:
    \begin{prooftree}
 \AxiomC{$\Gamma, \varphi_{1}$}
 \AxiomC{$\Gamma, \varphi_{2}$}
   \RightLabel{\quad (where $\varphi_{1} \land \varphi_{2} \in \Gamma$, $\varphi_{1}\not\in\Gamma$,$\varphi_{2} \not\in\Gamma$.)}
  \BinaryInfC{$\Gamma$}
 \end{prooftree}

 \item $\forall$-Rule:
    \begin{prooftree}
 \AxiomC{$\Gamma, \varphi(a)$}
  \UnaryInfC{$\Gamma$}
 \end{prooftree}
(where $\forall x. \varphi(x) \in \Gamma$, $a \not \in fv(\Gamma)$, $\varphi(a) \not \in \Gamma$, $fv(\Gamma \cup \{\varphi(a)\}) \subseteq fv(\Gamma) \cup \{a\}$.
Note that $a$ might not occur in $\varphi(a)$.
The variable $a$ is called \deff{the eigenvaribale} of this rule.)

% \item $\forall^{\leq}$-Rule;
%    \begin{prooftree}
% \AxiomC{$\Gamma, \overline{a \leq t}, \varphi(a)$}
%   \RightLabel{\quad (where $\forall x \leq t. \varphi(x) \in \Gamma$, and $a$ is an eigenvariable)}
%  \UnaryInfC{$\Gamma$}
% \end{prooftree}

 \item True Sentence:
  \begin{prooftree}
 \AxiomC{$\Gamma, \overline{\varphi}$} \RightLabel{\quad (where $\varphi \in Th(\NN)$, and $\overline{\varphi} \not \in \Gamma$.)}
  \UnaryInfC{$\Gamma$}
 \end{prooftree}

 \item $p\Sigma_{k}(X)$-Induction:
     \begin{prooftree}
 \AxiomC{$\Gamma, \varphi(0)$}
 \AxiomC{$\Gamma, \overline{\varphi(a)},\varphi(a+1)$}
  \AxiomC{$\Gamma, \overline{\varphi(t)}$}
  \TrinaryInfC{$\Gamma$}
 \end{prooftree}
(where $t$ is an $\mathcal{L}_{\NN}$-term with $var(t) \subseteq fv(\varphi(t))$, $\varphi(x) \in p\Sigma_k(X)$, $\varphi(0) \not \in \Gamma$, $\varphi(t) \not\in\Gamma$, and
\[a \not \in fv(\Gamma \cup \{ \varphi(0)\})=fv(\Gamma \cup \{\overline{\varphi(t)}\})=fv(\Gamma),\quad fv(\Gamma\cup \{\overline{\varphi(a)}, \varphi(a+1)\}) = fv(\Gamma) \cup \{a\}.\]
 It automatically follows that $a \in fv(\varphi(a))$, and $\overline{\varphi(a)},\varphi(a+1) \not \in \Gamma$.
 The variable $a$ is called \deff{the eigenvariable} of this rule.)
 
 \item $p\Sigma_{k}(X)$-Cut:
      \begin{prooftree}
 \AxiomC{$\Gamma, \varphi$}
  \AxiomC{$\Gamma, \overline{\varphi}$}
  \BinaryInfC{$\Gamma$}
 \end{prooftree}
 (where $\varphi \in p\Sigma_k(X)$, $\varphi \not \in \Gamma$, $\overline{\varphi} \not \in \Gamma$, and $fv(\Gamma \cup \{ \varphi\})=fv(\Gamma \cup \{\overline{\varphi}\})=fv(\Gamma)$.)

\end{itemize}
\end{enumerate}
\end{defn}

\begin{rmk}
Note that, given $\pi=(\tau,\lambda) \colon I\Sigma_{k}(X) \vdash S$, a leaf $v=(v_{1},\ldots, v_{h})$ in $\tau$, and a formula $\varphi \in \lambda(v_{j}) \setminus S$ for some $j \in [1,h]$, then $\varphi$ is introduced by an Initial Sequent at $\lambda(v_{h})$, and there uniquely exists an index $i<j$ such that $\varphi \in \lambda(v_{i+1}) \setminus \lambda(v_{i})$. 
\end{rmk}

\begin{rmk}
Note that the above sequent calculus is of free-cut-free fashion; it admits cut for $p\Sigma_{k}(X)$-formulae only, which can be also regarded as instances of $\Sigma_{k}(X)$-Induction.
We separated $p\Sigma_{k}(X)$-Induction and $p\Sigma_{k}(X)$-Cut according to the occurrences of the eigenvariable $a$ in Definition \ref{DefSequentCalculus} because of technical convenience.
\end{rmk}

Now, assume we have an $I\Sigma_{1}(X)$-derivation $\pi=(\tau,\lambda)$ of $TI(\prec)$ in free variable normal form.
 Let $h := height(\tau)$.
 We extract a winning strategy of \textbf{Prover} in $\mathcal{G}_{1}(\prec,h)$ from $\pi$.
 Assume that the play so far is $\vec{P}=(P_{0},\ldots,P_{l})$, $P_{l}=(T,\rho)$ and $v=c(T)$.
 Inductively on $l$, we define: not only \textbf{Prover}'s strategy $(f_{1}(\vec{P}), f_{2}(\vec{P},Q,\rho'))$, but also the following auxiliary data:
  \begin{itemize}
    \item a homomorphism $V[\vec{P}] \colon T \rightarrow \tau$,
    \item a finite assignment $E[\vec{P}]$ for $fv(\lambda(V[\vec{P}](v)))$,
    \item a counterexample function 
     \[W[\vec{P}] \colon \lambda(V[\vec{P}](v) )\cap \bigcup_{k=1}^\infty p\Pi_k(X) \rightarrow \NN,\]
 \end{itemize}
 
 We design them fulfilling the following condition:
 \begin{defn}[The condition (\dag) for $f_{1},f_{2},V,E,W$]\label{dagar}
 We name the following conjunctive condition for $f_{1},f_{2},V,E,W$ as $(\dag)$:
 \begin{enumerate}
 %\item $height(V[\vec{P}](w)) = height(w)$. 
 %\item Let $\emptyset=v_{0}, v_{1},\ldots,v_{height(v)}=v$ be the branch from the root $\emptyset$ to $v$.
% Then $\emptyset=V[\vec{P}](v_{0}), \ldots, V[\vec{P}](v_{height(v)})$ is a path from the root in $\tau$.
\item Let $w \subsetneq v$ and $\lambda(V[\vec{P}](w))=\Gamma$ is introduced by $p\Sigma_{1}(X)$-Cut:
 \begin{prooftree}
 \AxiomC{$\Gamma, \varphi$}
  \AxiomC{$\Gamma, \overline{\varphi}$}
  \BinaryInfC{$\Gamma$}
 \end{prooftree}
 
Suppose $w*n \subseteq v$.
 Then $n \in \{0,1\}$, and:
 \begin{itemize}
  \item if $n=0$, then $\lambda(V[\vec{P}](w*n))=\Gamma\cup\{ \varphi\}$ and $w*1 \in T$.
  \item if $n=1$, $\lambda(V[\vec{P}](w*n))=\Gamma\cup\{\overline{\varphi}\}$.
 \end{itemize}

 \item Let $w \subsetneq v$ and $\lambda(V[\vec{P}](w))=\Gamma$ is introduced by $p\Sigma_{1}(X)$-Induction:
      \begin{prooftree}
 \AxiomC{$\Gamma, \varphi(0)$}
 \AxiomC{$\Gamma, \overline{\varphi(a)},\varphi(a+1)$}
  \AxiomC{$\Gamma, \overline{\varphi(t)}$}
  \TrinaryInfC{$\Gamma$}
 \end{prooftree}
  
 Suppose $w*n \subseteq v$.
 Then $n \in [0,E[\vec{P}](t)+1]$ (note that $E[\vec{P}]$ covers all the free variables occurring in $t$ and evaluate it as a natural number), $w*l \in T$ for every $l \in [n,E[\vec{P}](t)+1]$, and:
 \begin{itemize}
  \item If $n=0$, then $\lambda(V[\vec{P}](w*n))=\Gamma\cup\{ \varphi(0)\}$.
  \item If $n=E[\vec{P}](t)+1$, $\lambda(V[\vec{P}](w*n))=\Gamma\cup\{ \overline{\varphi(t)}\}$.
  \item Otherwise, $\lambda(V[\vec{P}](w*n))=\Gamma\cup \{\overline{\varphi(a)},\varphi(a+1)\}$, and $E[\vec{P}](a)=n-1$.
 \end{itemize}
 
 \item Let $\varphi \in \lambda(V[\vec{P}](v))$.
According to the complexity of $\varphi$, the following hold:
 \begin{enumerate}
  \item If $\varphi$ is $X$-free, then $(E[\vec{P}],\rho) \Vdash \overline{\varphi}$ (or, equivalently, $(\NN, E[\vec{P}]) \models \overline{\varphi}$).
  Furthermore, if $\varphi$ is of the form $\forall x. \psi(x)$ where $\psi(x) \in \bigcup_{i=0}^{\infty}p\Sigma_i$, then 
  \[(E[\vec{P}],\rho) \Vdash \overline{\psi(W[\vec{P}](\varphi))}.\]  
  \item Otherwise, if $\varphi \in \Delta_{0}(X)$, then $(E[\vec{P}],\rho) \Vdash \overline{\varphi}$.
  \item Otherwise, if $\varphi \in p\Pi_{1}(X)$, then $(E[\vec{P}],\rho) \Vdash \overline{\psi(W[\vec{P}](\varphi))}$, where $\varphi \equiv \forall x. \psi(x)$.
\end{enumerate}
 \end{enumerate}
 \end{defn}

Now, we give the quantitative statement and prove it, which amounts to construct $f_{1},f_{2},V,E,W$ satisfying $(\dag)$:

\begin{thm}\label{ProofIsStrategy1}
Suppose $I\Sigma_1(X) \vdash TI(\prec)$.
Then there exists $h \in \omega$ such that \textbf{Prover} has a winning strategy for $\mathcal{G}_1(\prec,h)$.
\end{thm}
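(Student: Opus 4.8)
The plan is to carry out the construction announced just before the statement. Fix an $I\Sigma_1(X)$-derivation $\pi=(\tau,\lambda)$ of $TI(\prec)$ in free variable normal form and set $h:=height(\tau)$. I will define \textbf{Prover}'s strategy $(f_1,f_2)$ together with the auxiliary data $V[\vec{P}]$, $E[\vec{P}]$, $W[\vec{P}]$ by recursion on the length of the play $\vec{P}$, arranging that the invariant $(\dag)$ of Definition~\ref{dagar} holds at every position reached. The governing picture is that \textbf{Prover} reads $\pi$ as a route map: the homomorphism $V$ embeds the current clock-comb $T$ into $\tau$ so that the leftmost leaf $c(T)$ marks the node of $\tau$ under inspection, while $E$ and $W$ maintain a falsifying assignment together with explicit counterexamples certifying, against \textbf{Delayer}'s commitments $\rho$, that every $X$-free, $\Delta_0(X)$, and $p\Pi_1(X)$ formula of the current sequent $\lambda(V[\vec{P}](c(T)))$ is forced false. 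The $p\Sigma_1(X)$ cut-formulae and the $p\Sigma_2(X)$ disjunct of $TI(\prec)$ are deliberately left unconstrained by $(\dag)$, which is exactly what lets the base case $\mathcal{G}_1$ succeed while tracking only a single existential block.

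For the base case I send the root of the clock to the root of $\tau$, carrying the label $TI(\prec)$, take $E$ empty, and use \textbf{Delayer}'s opening claim $\rho_0(m_0)=1$ to set $W(\forall y.\,\overline{y\in X}):=m_0$; this makes $(\dag)$ hold initially. The inductive step is a case analysis on the rule of $\tau$ deriving $\lambda(V[\vec{P}](c(T)))$. At an $\lor$-, $\exists$-, or True-Sentence-rule \textbf{Prover} plays an $o=0$ move following the single premise prescribed by $\pi$; at a $\land$-rule it grows the child whose conjunct is already false under $(E,\rho)$; at a $\forall$-rule it reads the eigenvalue off the counterexample function via $E(a):=W(\forall x.\varphi(x))$; and in every case it first queries exactly the $X$-atoms whose values are needed to re-establish the forcing clauses of part~3 of $(\dag)$ at the new frontier. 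The $p\Sigma_1(X)$-Cut and $p\Sigma_1(X)$-Induction rules are handled by the branchings prescribed in parts~1 and~2 of $(\dag)$: the induction inference is unfolded into the comb of $E(t)+1$ sibling premises, with the eigenvariable fixed to $E(a)=n-1$ on the $n$-th copy, and \textbf{Prover} traverses these siblings from left to right, using the backtracking move $o=1$ to discard a finished premise and advance to the next while carrying the data just obtained into the updated $\rho$.

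It then remains to see that this strategy wins. While climbing, \textbf{Prover} never triggers a losing clause: since $V$ is a homomorphism into $\tau$ we have $height(c(T))=height(V[\vec{P}](c(T)))\leq height(\tau)=h$, so no $o=0$ move can push the clock above height $h$, and part~2 of $(\dag)$ guarantees that whenever $o=1$ is played the required right sibling $b*(k+1)$ is already in $T$, so no backtrack is illegal. By Corollary~\ref{G1determined} the game is finite and always names a winner, so every play must end; and the steering above keeps \textbf{Prover} off the vacuous $X$-free tautological leaves, so the only terminal configurations consistent with $(\dag)$ are Initial Sequents in an $X$-literal. There the two complementary literals cannot both be forced false by a consistent extension of $\rho$, and the answer constraint of step~\ref{answer} --- which forces \textbf{Delayer}'s positive commitments to descend along the well-order $\prec$ --- leaves \textbf{Delayer} no consistent reply to the query dictated by that leaf; she must return a $\rho'$ contradicting $\rho$, which is precisely \textbf{Prover}'s winning condition.

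I expect the main obstacle to be the inductive maintenance of $(\dag)$ across the $p\Sigma_1(X)$-Induction rule. One must unfold a single inference into the finite comb of step-premises, keep the eigenvariable assignment coherent as prescribed in part~2, and verify that the backtracking transitions simultaneously preserve the falsification clauses of part~3 after the clock has been pruned, respect the free variable normal form so that the counterexamples stored in $W$ stay valid, and keep the clock a legal comb that strictly decreases in the sense of Lemma~\ref{clockismonotone}. Coordinating this bookkeeping --- rather than any individual case of the rule analysis --- is where the argument will be most delicate.
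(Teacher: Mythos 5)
Your proposal follows the same architecture as the paper's proof --- the data $V,E,W$, the invariant $(\dag)$ of Definition \ref{dagar}, and a case analysis on the rule of $\pi$ deriving $\lambda(V[\vec{P}](c(T)))$ --- but it has two genuine gaps. The first is that the engine of the argument is missing. You list the $\exists$-rule among the rules where \textbf{Prover} ``plays an $o=0$ move following the single premise,'' and you describe the $o=1$ moves only as discarding a ``finished'' premise of an unfolded induction, without ever defining ``finished.'' In the paper's proof the $\exists$-rule is exactly where the behaviour splits: when $\exists x.\varphi(x)$ is a $p\Sigma_1(X)$ formula eliminated below by $p\Sigma_{1}(X)$-Induction (or Cut), \textbf{Prover} queries the finite set deciding $\varphi(u)$ under $E[\vec{P}]$, and if \textbf{Delayer}'s answer forces $\varphi(u)$ to be \emph{true}, then \textbf{Prover} cannot descend (clause 3 of $(\dag)$ would fail at the child) and must instead play $\langle 1,w\rangle$, moving to the sibling $w*(n+1)$ of the induction node; the point of that move is that the value $E[\vec{P}](u)$ is installed in $W$ as the counterexample to $\overline{\psi(a)}$ (with the eigenvalue advanced) or to $\overline{\psi(t)}$, which is precisely what re-establishes clause 3 of $(\dag)$ at the new frontier. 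This transfer of a true existential witness into the counterexample function is the witnessing step that makes the whole construction work; you explicitly defer it (``I expect the main obstacle\ldots'') rather than carry it out, so the strategy is undefined in the only case that matters.

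The second gap is the endgame. You claim the play terminates at Initial Sequents in an $X$-literal, where \textbf{Delayer} is forced into a contradiction. But clause 3 of $(\dag)$ is inconsistent with the frontier sitting at \emph{any} Initial Sequent $\Gamma, L, \overline{L}$: it would force both $L$ and $\overline{L}$ to be falsified by every total extension of $\rho$, which is absurd whether or not $L$ mentions $X$. So under the maintained invariant the frontier provably never reaches a leaf of $\tau$, and no such terminal configuration exists. The paper's conclusion is indirect: \textbf{Prover}'s prescribed moves never trigger either losing clause, the game ends in finitely many steps by Corollary \ref{G1determined}, and the only remaining way a play can end is that \textbf{Delayer}'s answer contradicts $\rho$. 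Relatedly, your height bound is a non sequitur: from $height(c(T))=height(V[\vec{P}](c(T)))\leq h$ alone, an $o=0$ move could still create nodes of height $h+1$. What is actually needed is that whenever $o=0$ is played, $V[\vec{P}](c(T))$ is not an Initial Sequent and hence not a leaf of $\tau$, so its height is $<h$ and the new children of $c(T)$, mapped by the extended homomorphism to children in $\tau$, stay at height $\leq h$.
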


\begin{proof}
 By assumption, we have an $I\Sigma_{1}(X)$-derivation $\pi=(\tau,\lambda)$ of $TI(\prec)$ in free variable normal form.
 Let $h := height(\tau)$.
 We consider \textbf{Prover}'s winning strategy for $\mathcal{G}_1(\prec,h)$.
 Assume that the play so far is $\vec{P}=(P_{0},\ldots,P_{l})$, $P_{l}=(T,\rho)$ and $v=c(T)$.
 Inductively on $l$, we define the following $V,E,W,f_{1},f_{2}$ enjoying $(\dag)$: 
  \begin{itemize}
    \item a homomorphism $V[\vec{P}] \colon T \rightarrow \tau$,
    \item a finite assignment $E[\vec{P}]$ for $fv(\lambda(V[\vec{P}](v)))$,
    \item a counterexample function 
     \[W[\vec{P}] \colon \lambda(V[\vec{P}](v) )\cap \bigcup_{k=1}^\infty p\Pi_k(X) \rightarrow \NN,\]
     \item \textbf{Prover}'s strategy $(f_{1}(\vec{P}), f_{2}(\vec{P},Q,\rho'))$.
 \end{itemize}

 By relabeling the vertices of $\tau$, we may assume that $\tau$ has a canonical labeling so that:
 \[\sigma*k \in \tau \ \& \ k' \leq k \Longrightarrow \sigma*k' \in \tau.\]
 
 By subformula-property of cut-free sequent calculus, each formula $\varphi$ appearing in $\pi$ is either $s\Sigma_{1}(X)$, $s\Pi_{1}(X)$, a subformula of $TI(\prec)$, or $X$-free.
 
 Now, consider the base case $l=0$.
 $P_{0}$ is the initial position $(T_0,\rho_0)$, where $\rho_{0}=\{m_{0} \mapsto 1\}$.
  Set:
  \begin{itemize}
  \item $V[\vec{P}] \colon T_{0} \mapsto \tau; \emptyset \mapsto \emptyset$,
  \item $E[\vec{P}] := \emptyset$ since there is no free-variable in $\lambda(V[\vec{P}](\emptyset)) = TI(\prec)$,
  \item $W[\vec{P}]; (\forall y. \overline{y \in X}) \mapsto m_0$.
  \end{itemize}
  $f_{1}(\vec{P})$ and $f_{2}(\vec{P},Q,\rho')$ are described comprehensively in the following inductive way.
  
 Suppose we are at a play $\vec{P}=(P_{0},\ldots,P_{l})$,  $V[\vec{P}], E[\vec{P}], W[\vec{P}]$ are already defined, and, together with $P_{l}=(T,\rho)$ and $v=c(T)$, they satisfy the condition $(\dag)$. (The initial position trivially satisfies them.)
 
 We describe $f_{1}(\vec{P})$, $f_{2}(\vec{P},Q,\rho')$, and $V[\vec{P}P_{l+1}]$, $E[\vec{P}P_{l+1}]$, $W[\vec{P}P_{l+1}]$ for the next position $P_{l+1}$. We split cases by the rule deriving $\lambda(V[\vec{P}](v))$ in $\pi$.
  $\lambda(V[\vec{P}](v))$ is not an Initial Sequent since, if it was the case, then the literals $L$ and $\overline{L}$ in $\lambda(V[\vec{P}](v))$ should be both falsified by $(E[\vec{P}],\rho)$, which is absurd.
  
  Note that, in each case below, the condition $(\dag)$ remain satisfied: (Below, we omit most conditions imposed on the formulae and the sequents related to each rule. See Definition \ref{DefSequentCalculus} for reference)

 \begin{enumerate}
   \item\label{AxiomCase} The case when $\lambda(V[\vec{P}](v))=\Gamma$ is derived by True Sentence:
   \begin{prooftree}
 \AxiomC{$\Gamma, \overline{\varphi}$} \RightLabel{\quad (where $\varphi \in Th(\NN)$)}
  \UnaryInfC{$\Gamma$}
 \end{prooftree}
  Set $f_{1}(\vec{P}):=\emptyset$. 
 Let $\rho'$ be \textbf{Delayer}'s answer.
 Set
 \[f_{2}(\vec{P},\emptyset,\rho'):= \left\langle 0, 0 \right\rangle.\]
  \textit{(In words, \textbf{Prover} queries nothing and just put a child of $v$.)}
  For the next position $P_{l+1}=(T',\rho'\cup\rho)$:
  \begin{itemize}
   \item set $V[\vec{P}P_{{l+1}}]$ as the extension of $V[\vec{P}]$ which maps the new child of $v$ to the child of $V[\vec{P}](v)$,
 \item $E[\vec{P}P_{l+1}] := E[\vec{P}]$,
 \item 
 \begin{align*}
 W[\vec{P}P_{l+1}] := 
 \begin{cases}
     W[\vec{P}] \quad &\mbox{(if $\overline{\varphi} \not \in \bigcup_{i=1}^{\infty}p\Pi_{i}$)}\\
     W[\vec{P}] \cup \{\overline{\varphi} \mapsto w\} \quad &\mbox{(if $\overline{\varphi} \in \bigcup_{i=1}^{\infty}p\Pi_{i}$ and $w$ is the $\leq$-least witness of $\varphi$)}
 \end{cases}
\end{align*}
\end{itemize}
 \item The case when $\lambda(V[\vec{P}](v))=\Gamma$ is derived by $p\Sigma_{1}(X)$-Cut:
      \begin{prooftree}
 \AxiomC{$\Gamma, \varphi$}
  \AxiomC{$\Gamma, \overline{\varphi}$}
  \BinaryInfC{$\Gamma$}
 \end{prooftree}
 (where $\varphi \in p\Sigma_1(X)$.)

 Set $f_{1}(\vec{P}):=\emptyset$. 
 Let $\rho'$ be \textbf{Delayer}'s answer.
 Set
 \[f_{2}(\vec{P},\emptyset,\rho'):= \left\langle 0, 1\right\rangle.\]
 
 \textit{(In words, \textbf{Prover} queries nothing and just put two children at $v$. $c(T)$ moves to the left child.)}
 For the next position $P_{l+1}=(T',\rho'\cup\rho)$, set $V[\vec{P}P_{{l+1}}]$ as the extension of $V[\vec{P}]$ which maps $v*n$ ($n =0,1$) to $V[\vec{P}](v)*n$.
 Furthermore, set:
  \begin{align*}
     E[\vec{P}P_{l+1}] := E[\vec{P}],\ W[\vec{P}P_{l+1}] := W[\vec{P}].
 \end{align*}

  \item The case when $\lambda(V[\vec{P}](v))=\Gamma$ is derived by $p\Sigma_{1}(X)$-Induction:
     \begin{prooftree}
 \AxiomC{$\Gamma, \varphi(0)$}
 \AxiomC{$\Gamma, \overline{\varphi(a)},\varphi(a+1)$}
  \AxiomC{$\Gamma, \overline{\varphi(t)}$}
  \TrinaryInfC{$\Gamma$}
 \end{prooftree}
 ($\varphi(x) \in p\Sigma_1(X)$, and $a$ is the eigenvariable.)

 Set $f_{1}(\vec{P}):=\emptyset$. 
 Let $\rho'$ be \textbf{Delayer}'s answer.
 Set
 \[f_{2}(\vec{P},\emptyset,\rho'):= \left\langle 0, E[\vec{P}](t)+1\right\rangle.\]
 \textit{(In words, \textbf{Prover} queries nothing and put children of $v$ corresponding to the sequents obtained by expanding the above $p\Sigma_{1}(X)$-Induction to consecutive cuts.)}
 
 For the next position $P_{l+1}=(T',\rho'\cup\rho)$, set $V[\vec{P}P_{{l+1}}]$ as the extension of $V[\vec{P}]$ which maps $v*n$ ($n \in [0, E[\vec{P}](t)+1]$) to:
 \begin{itemize}
   \item $V[\vec{P}](v)*0$ if $n=0$. (Note that $\lambda(V[\vec{P}](v)*0) = \Gamma \cup\{ \varphi(0)\}$.)
   \item $V[\vec{P}](v)*2$ if $n=E[\vec{P}](t)+1$. (Note that $\lambda(V[\vec{P}](v)*2) = \Gamma \cup \{\overline{\varphi(t)}\}$.)
   \item $V[\vec{P}](v)*1$ otherwise. (Note that $\lambda(V[\vec{P}](v)*1) = \Gamma\cup\{\overline{\varphi(a)},\varphi(a+1)\}$.)
  \end{itemize}
  Furthermore, set:
  \begin{align*}
     E[\vec{P}P_{l+1}] := E[\vec{P}],\ W[\vec{P}P_{l+1}] := W[\vec{P}].
 \end{align*}

  \item $\lor$-Rule: 
   \begin{prooftree}
 \AxiomC{$\Gamma, \varphi_{i_{0}}$} \RightLabel{\quad (where $\varphi_{1} \lor \varphi_{2} \in \Gamma$, $i_{0}=1,2$, $\varphi_{i_{0}} \not\in\Gamma$.)}
  \UnaryInfC{$\Gamma$}
 \end{prooftree}
 
 By definition of $s\Sigma_k$, $\varphi_{1}\lor \varphi_2$ is $\Delta_0(X)$.
 Hence, it is already falsified by $(E[\vec{P}], \rho)$.
 Define $f_{1},f_{2},V[\vec{P}P_{l+1}], E[\vec{P}P_{l+1}], W[\vec{P}P_{l+1}]$ similarly to Case \ref{AxiomCase}. 
 Note that $\varphi_{i_{0}}$ is $\Delta_{0}(X)$ and therefore $W[\vec{P}P_{l+1}]:=W[\vec{P}]$ in this case.
 
  \item $\land$-Rule:
    \begin{prooftree}
 \AxiomC{$\Gamma, \varphi_{0}$}
 \AxiomC{$\Gamma, \varphi_{1}$}
   \RightLabel{\quad (where $\varphi_{1} \land \varphi_{2} \in \Gamma$, $\varphi_{1}\not\in\Gamma$,$\varphi_{2} \not\in\Gamma$.)}
  \BinaryInfC{$\Gamma$}
 \end{prooftree}

 By definition of $s\Sigma_k$, $\varphi_{0}\land \varphi_1$ is $\Delta_0(X)$.
 Hence, it is already falsified by $(E[\vec{P}], \rho)$.
 Let $i$ be the least index such that $\varphi_i$ is falsified.
 
 We define $f_{1},f_{2},V[\vec{P}P_{l+1}], E[\vec{P}P_{l+1}], W[\vec{P}P_{l+1}]$ similarly to Case \ref{AxiomCase} except we set $V[\vec{P}P_{l+1}](v*0):=V[\vec{P}](v)*i$.
 
 \item $\forall$-Rule:
    \begin{prooftree}
 \AxiomC{$\Gamma, \varphi(a)$}
  \UnaryInfC{$\Gamma$}
 \end{prooftree}
(where $\forall x. \varphi(x) \in \Gamma$, and $a$ is the eigenvariable.)

By (\dag) of Induction Hypothesis, we have $(E[\vec{P}],\rho) \Vdash \overline{\varphi(W[\vec{P}](\forall x.\varphi(x)))}$.
Note that $\forall x. \varphi(x) \in s\Pi_{1}(X)$ by subformula-property of $\pi$.

 We define $f_{1},f_{2},V[\vec{P}P_{l+1}], E[\vec{P}P_{l+1}], W[\vec{P}P_{l+1}]$ similarly to Case \ref{AxiomCase} except we set 
 \begin{align*}
     E[\vec{P}P_{l+1}] &:= E[\vec{P}] \sqcup \{a \mapsto W[\vec{P}](\forall x.\varphi(x))\}.
 \end{align*}
 Note that $a$ is out of $\dom (E[\vec{P}])$.

    \item $\exists$-Rule: 
   \begin{prooftree}
 \AxiomC{$\Gamma, \varphi(u)$}
  \UnaryInfC{$\Gamma$}
 \end{prooftree}
 (where $\exists x. \varphi(x) \in \Gamma$.)
 
 We split cases according to the form of $\exists x. \varphi(x)$.
  \begin{enumerate}
  \item\label{alreadydetermined} First we consider the case when $\exists x. \varphi(x)$ is $X$-free or a $\Delta_{0}(X)$-formula.
  Then the definitions are analogous to Case \ref{AxiomCase}.
   \item\label{subformulaofTI} Next, we consider the case when 
   \begin{align*}
       \exists x. \varphi(x) \equiv \exists x_0\forall x_1. ( x_0 \in X \land (\overline{x_1 \in X} \lor \overline{x_{1} \prec x_{0}})) \ i.e. \\
       x \equiv x_0 \ \& \
       \varphi(x) \equiv \forall x_1. ( x_0 \in X \land (\overline{x_1 \in X} \lor \overline{x_{1} \prec x_{0}}))
   \end{align*}
   In this case, we set $f_1(\vec{P}) := \{E[\vec{P}](u)\}$.
 If $\rho'$ is \textbf{Delayer}'s answer, then either:
 \begin{itemize}
     \item $\rho'(E[\vec{P}](u)) =0$ or
     \item $\rho'(E[\vec{P}](u)) =1$ and there exists $m \prec E[\vec{P}](u)$ such that $\rho'(m)=1$.
 \end{itemize}
 Let $w$ be $0$ in the first case and be the $\leq$-minimum $m$ in the latter case.
 Set
 \[f_{2}(\vec{P},f_1(\vec{P}),\rho'):= \left\langle 0, 0 \right\rangle.\]
 For the next position $P_{l+1}=(T',\rho'\cup\rho)$, set $V[\vec{P}P_{{l+1}}]$ as the extension of $V[\vec{P}]$ which maps the new child of $v$ to the child of $V[\vec{P}](v)$.
 Define
 \begin{align*}
     E[\vec{P}P_{l+1}] := E[\vec{P}],\
     W[\vec{P}P_{l+1}] := W[\vec{P}] \sqcup \{\varphi(u) \mapsto w\}.
 \end{align*}
 Note that $\varphi(u) \not \in \Gamma$ by Definition \ref{DefSequentCalculus}.

   \item Otherwise, $\exists x. \varphi(x)$ is $p\Sigma_1(X)$ and eliminated by $p\Sigma_1(X)$-Induction or $p\Sigma_{1}(X)$-Cut in $\pi$. 
   The latter case is analogous to and simpler than the former case, so we focus on the former case.
   
   In particular, $\varphi(u)$ is $\Delta_0(X)$, and therefore there exists the minimum finite subset $Q\subseteq \NN$ such that any extension of $\rho$ covering $Q$ determines the truth value of $\varphi(u)$ under the assignment $E[\vec{P}]$.
  Set $f_1(\vec{P}) := Q$.
  
 If $\rho'$ is \textbf{Delayer}'s answer, then either:
\[(E[\vec{P}], \rho') \Vdash \varphi(u) \ \mbox{or} \ (E[\vec{P}], \rho') \Vdash \overline{\varphi(u)}.\]
In the latter case, set $f_{2}(\vec{P},f_1(\vec{P}),\rho'):= \left\langle 0, 0 \right\rangle$, and define $V[\vec{P}P_{{l+1}}]$, $E[\vec{P}P_{{l+1}}]$, $W[\vec{P}P_{{l+1}}]$ similarly to Case \ref{AxiomCase}.

 In the case when $(E[\vec{P}], \rho') \Vdash \varphi(u)$, we finally use the option $o=1$.
 By assumption, there exists $w \subsetneq v$ such that $\lambda(V[\vec{P}](w))=\Delta$ is derived by $p\Sigma_{1}(X)$-Induction:
 \begin{prooftree}
 \AxiomC{$\Delta, \psi(0)$}
 \AxiomC{$\Delta, \overline{\psi(a)},\psi(a+1)$}
  \AxiomC{$\Delta, \overline{\psi(t)}$}
  \TrinaryInfC{$\Delta$}
 \end{prooftree}
  
 and $\exists x. \varphi(x)$ is $\psi(0)$ or $\psi(a+1)$.
 
 We first consider the case when $\exists x. \varphi(x) \equiv \psi(0)$.
 The following figure comprehends the situation (``@ $\sigma$'' indicates that the corresponding sequent is labelled at a vertex $\sigma$ in $\tau$):
 $$
\infer{\Delta \quad (@V[\vec{P}](w))}{
 \infer*{\Delta, \psi(0) \quad (@V[\vec{P}](w*0))}{
  \infer[\exists x. \varphi(x) \equiv \psi(0) \in \Gamma]{\Gamma \quad (@V[\vec{P}](v))}{
   \Gamma, \varphi(u) \quad (@V[\vec{P}](v)*0)}
  }
  &
  \infer*{\Delta, \overline{\psi(a)},\psi(a+1)}{
  }
   &
  \infer*{\Delta, \overline{\psi(t)}}{
  }
}
$$
 
 Since $(E[\vec{P}], \rho') \Vdash \varphi(u)$, the value $E[\vec{P}](u)$ serves as a counterexample of $\overline{\psi(0)}$.
  By $(\dag)$ of Induction Hypothesis, we have $w*0 \subseteq v$, and $w*1 \in T$.
  Thus, we define
  \[f_{2}(\vec{P}, Q,\rho'):=\langle 1, w \rangle.\]
  Note that \textbf{Prover} does not lose by choosing this option, and also
  \[\lambda(V[\vec{P}](w*1))= \Delta\cup \{\overline{\psi(a)},\psi(a+1)\}.\]
  
 Let $E'$ be the restriction of $E[\vec{P}]$ to $fv(\Delta\cup \{\psi(0)\})=fv(\Delta)$ and $W'$ be the restriction of $W[\vec{P}]$ to the formulae in $\Delta$. 
 For the next position $P_{l+1}=(T',\rho'\cup\rho)$, define
 \begin{align*}
  V[\vec{P}P_{l+1}] &:=V[\vec{P}]\restriction T'\\
     E[\vec{P}P_{l+1}] &:= E'\sqcup \{a \mapsto 0\} \\
     W[\vec{P}P_{l+1}] &:= W' \sqcup \{\overline{\psi(a)} \mapsto E[\vec{P}](u)\}.
 \end{align*}
  
 Next, we consider the case when $\exists x. \varphi(x) \equiv \psi(a+1)$.
 Let $w*n \subseteq v$.
 By $(\dag)$ of Induction Hypothesis, we have $n \in [1,E[\vec{P}](t)]$, $\lambda(V[\vec{P}](w*n))=\Delta\cup\{ \overline{\psi(a)},\psi(a+1)\}$ and $E[\vec{P}](a) = n-1$.

 If $n=E[\vec{P}](t)$, then we have $(E[\vec{P}],\rho') \Vdash \psi(t)$, witnessed by $x= E[\vec{P}](u)$.
 We will record this $x$ as a counterexample for the p$\Pi_{1}(X)$-formula $\overline{\psi(t)}$ below.
 
 Set
 \[f_{2}(\vec{P},f_1(\vec{P}),\rho'):= \left\langle 1, w \right\rangle.\]
 Note that $w*(n+1) \in T$ by assumption and therefore \textbf{Prover} does not lose.

 Let $E'$ be the restriction of $E[\vec{P}]$ to $fv(\Delta\cup \{\overline{\psi(t)}\})=fv(\Delta)$ and $W'$ be the restriction of $W[\vec{P}]$ to the formulae in $\Delta$. 
 For the next position $P_{l+1}=(T',\rho'\cup\rho)$, define
 \begin{align*}
  V[\vec{P}P_{l+1}] &:=V[\vec{P}]\restriction T'\\
     E[\vec{P}P_{l+1}] &:= E' \\
     W[\vec{P}P_{l+1}] &:= W' \sqcup \{\overline{\psi(t)} \mapsto E[\vec{P}](u)\}.
 \end{align*}
 If $n < E[\vec{P}](t)$,
 let $E'$ be the restriction of $E[\vec{P}]$ to $fv(\Delta)$.
 We have 
 \[(E'\sqcup\{a \mapsto n\}, \rho') \Vdash \psi(a),\]
  witnessed by $x= E[\vec{P}](u)$.

 Hence, set
 \[f_{2}(\vec{P},f_1(\vec{P}),\rho'):= \left\langle 1, w \right\rangle.\]
 Note that $w*(n+1) \in T$ by assumption and therefore \textbf{Prover} does not lose.
 Let $W'$ be the restriction of $W[\vec{P}]$ to the formulae in $\Delta$. 
 For the next position $P_{l+1}=(T',\rho'\cup\rho)$, set 
 \begin{align*}
 V[\vec{P}P_{l+1}] &:=V[\vec{P}]\restriction T',\\
     E[\vec{P}P_{l+1}] &:= E' \sqcup \{a \mapsto n\},\\
     W[\vec{P}P_{l+1}] &:= W' \sqcup \{\overline{\psi(a)} \mapsto E[\vec{P}](u)\}.
 \end{align*}

 \end{enumerate}
 \end{enumerate}
 
 This completes the description of \textbf{Prover}'s strategy. 
 Since \textbf{Prover} can continue the play as long as \textbf{Delayer} can make a move, \textbf{Prover}'s strategy $(f_{1},f_{2})$ is a winning one.
\end{proof}

Together with Proposition \ref{DelayerwinsG1}, the previous theorem implies the following:

\begin{cor}
   If $\pi=(\tau,\lambda) \colon I\Sigma_1(X) \vdash TI(\prec)$ and $|{\prec}| > \omega^{h+1}+\omega$, then $height(\pi) > h$.

   In particular, if $|{\prec}| \geq \omega^{\omega}$, then $I\Sigma_1(X) \not\vdash TI(\prec)$.
\end{cor}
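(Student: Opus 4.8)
The plan is to derive the Corollary purely by combining the two ``who wins'' results already established, using determinacy of the game as the bridge. The key observation I would lean on is that the proof of Theorem~\ref{ProofIsStrategy1} in fact produces a winning strategy for \textbf{Prover} in $\mathcal{G}_{1}(\prec, height(\pi))$ (there one sets $h := height(\tau) = height(\pi)$), not merely for some unspecified $h$; this refined form is what lets me speak about $height(\pi)$ in the conclusion. Against it I would play off Proposition~\ref{DelayerwinsG1}, which hands \textbf{Delayer} a winning strategy once the order type is large. Since by Corollary~\ref{G1determined} every play terminates with a unique winner, \textbf{Prover} and \textbf{Delayer} cannot both possess winning strategies for the same game, and that is the contradiction I would exploit.

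For the first assertion I would argue by contraposition. Assume $\pi \colon I\Sigma_{1}(X)\vdash TI(\prec)$ with $h' := height(\pi) \le h$, and set out to show $|{\prec}| \le \omega^{h+1}+\omega$. First I would record that $h' \ge 1$: a derivation of height $0$ is a single Initial Sequent $\Gamma, L, \overline{L}$, whereas $TI(\prec)$ contains no literal paired with its complement (its two members are a $p\Pi_{1}(X)$- and a $p\Sigma_{2}(X)$-formula), so $TI(\prec)$ cannot be an Initial Sequent. With $h' \ge 1$ in hand, Theorem~\ref{ProofIsStrategy1} gives \textbf{Prover} a winning strategy for $\mathcal{G}_{1}(\prec,h')$. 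Now suppose toward a contradiction that $|{\prec}| > \omega^{h+1}+\omega$. Because $h' \le h$ gives $\omega^{h'+1}+\omega \le \omega^{h+1}+\omega < |{\prec}|$, Proposition~\ref{DelayerwinsG1} (applicable since $h' > 0$) gives \textbf{Delayer} a winning strategy for the \emph{same} game $\mathcal{G}_{1}(\prec,h')$. Playing the two strategies against one another yields, by Corollary~\ref{G1determined}, a single finite play won by both players---absurd. Hence $|{\prec}| \le \omega^{h+1}+\omega$, which is the contrapositive of the desired implication.

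The ``in particular'' clause then follows by letting $h$ range. Suppose $|{\prec}| \ge \omega^{\omega}$ yet $I\Sigma_{1}(X)\vdash TI(\prec)$ via some $\pi$ with $h_{0} := height(\pi) < \omega$. Since $\omega^{h_{0}+1}+\omega < \omega^{h_{0}+2} \le \omega^{\omega} \le |{\prec}|$, the first part applied with $h = h_{0}$ forces $height(\pi) > h_{0}$, i.e.\ $h_{0} > h_{0}$, a contradiction; so no such $\pi$ exists.

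The substantive content of the statement is entirely carried by Theorem~\ref{ProofIsStrategy1} and Proposition~\ref{DelayerwinsG1}, so I expect no real difficulty here; the only points demanding care are the bookkeeping ones, and these are what I would treat as the ``obstacle'': reading $h = height(\pi)$ out of the construction in Theorem~\ref{ProofIsStrategy1} rather than treating $h$ as merely existentially quantified, dispatching the degenerate case $h' = 0$ so that the hypothesis $h > 0$ of Proposition~\ref{DelayerwinsG1} is met, and invoking determinacy in the precise form ``not both players can have a winning strategy'' to close the argument.
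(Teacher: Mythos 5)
Your proposal is correct and follows essentially the same route the paper intends: the corollary is stated as an immediate consequence of Theorem \ref{ProofIsStrategy1} (read in its refined form, where the proof explicitly takes $h := height(\tau)$), Proposition \ref{DelayerwinsG1}, and determinacy (Corollary \ref{G1determined}), which is precisely how you argue. Your additional bookkeeping — ruling out the degenerate case $height(\pi)=0$ so that the hypothesis $h>0$ of Proposition \ref{DelayerwinsG1} is met, and the monotonicity step $\omega^{h'+1}+\omega \leq \omega^{h+1}+\omega$ — fills in details the paper leaves implicit, and does so correctly.
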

%Actually, with a more careful analysis, we can show that there exists a path of the proof tree of $\pi$ in which $\Sigma_1(X)$-Induction appears at least $h$-times.

Furthermore, instead of treating $\prec$ as a binary predicate symbol, we can use a parametrized well-order $\prec_p$, that is, a ternary predicate symbol $\prec(p,x,y)$.
In particular, if $|{\prec_p}|=\omega^{f(p)+1}+\omega+1$ for a function $f$, then the shortest $I\Sigma_{1}(X)$-derivation (in cut-free and free-variable normal form) of $TI(\prec_p)$ for a fixed $p \in \NN$ has size at least $f(p)$.
In this way, given an arbitrary growth rate, we can construct a family of first-order formulae whose proof-lengths majorize the given one while the sizes of the formulae themselves are $O(\log(p))$ if we regard the size of the symbol $\prec(p,x,y)$ is $O(1)$.\footnote{If we stick to a usual language of arithmetic, say, the language of ordered rings, we can still show analogous results for a primitive recursive ordinal $\prec$ (or $\prec_p$ and a primitive recursive function $f$ as above) by replacing the symbols by their definitions.}

\section{A game $\mathcal{G}_k(h)$ for $I\Sigma_k(X)$ v.s. $TI(\prec)$}\label{A game Gk for ISigmak(X) v.s. TI}

In this section, we present game notions $\mathcal{G}_{k+1}(\prec,h)$ corresponding to $I\Sigma_{k+1}(X)$-proofs of $TI(\prec)$ for $k \geq 1$.

In order to reduce complexity, we introduce the following notation: 
\begin{defn}\label{Defe}
Given a nonempty sequence $\sigma=(\sigma_{1},\ldots, \sigma_{l})$ ($l \geq 1$) in general, set $e(\sigma):=\sigma_{l}$.
\end{defn}

\subsection{The game $\mathcal{G}_{k}$ and who wins}

\begin{defn}\label{DefGk}
Fix parameters $\prec,h$.
Inductively on $k \geq 1$, we define the game notion $\mathcal{G}_{k}(\prec,h)$ together with families $\mathcal{S}_{k}(\prec,h)$, $\mathcal{P}_{k}(\prec,h)$.
For readability, in the following, we suppress the parameters $(\prec,h)$, and put a superscript $(i)$ on positions in $\mathcal{G}_{i}$.

$\mathcal{G}_{1}$, $\mathcal{S}_{1}$ and $\mathcal{P}_{1}$ are already defined in the previous subsection \ref{The game G1 and who wins}.

Now, for $k+1 \geq 2$,
let $\mathcal{P}_{k+1}$ be the set of all subsequences of sequences $\sigma \in \mathcal{S}_{k}$.

 $\mathcal{G}_{k+1}$ is the following game (see Remark \ref{IntuitiononDefGk}):
  
 \begin{enumerate}
  \item Played by two players.
  We call them \textit{\textbf{Prover}} and \textit{\textbf{Delayer}}. 

  \item \textit{\textbf{A possible position}} is an element of $\mathcal{P}_{k+1}$. 
      
   \item \textbf{Delayer} chooses $m_{0} \in \NN$.
    The \textit{\textbf{initial position}} is $P^{(k+1)}_{0}$; here, $P^{(j)}_{0} \in \mathcal{P}_{j}$ ($j =1,\ldots,k+1$) is inductively defined as follows: 
    \[P^{(1)}_{0}:=(T_0,\rho_{0}),\ \mbox{where $\rho_{0}=\{m_{0} \mapsto 1\}$,
    and $P^{(j+1)}_{0}:=(P^{(j)}_{0})$}.\]
   \item Now, we describe transitions between positions together with each player's options and judgment of the winner:
   suppose the current position is 
   \[P^{(k+1)}=(P^{(k)}_{0},\ldots,P^{(k)}_{l}) \in \mathcal{P}_{k+1} \quad (l \geq 0).\]
   \begin{enumerate}
   \item\label{queryk+1} First, \textbf{Prover} plays a finite subset $Q \subseteq \NN$, and send it to \textbf{Delayer}.   
   \item\label{answerk+1} \textbf{Delayer} plays a finite partial predicate $\rho^\prime$ exactly in the same way as Definition \ref{DefG1} (\ref{answer}).
    
    If $\rho^\prime$ contradicts $\rho$, then the play ends and \textbf{Prover} wins.
   Otherwise, proceed as follows.
   
   \item\label{nextmovek+1} \textbf{Prover} plays a pair $\langle o,b\rangle$, where $o \in [0,k+1]$, and $b \in \omega^{<\omega}$.
   If $o\neq 1$, $b$ must be a number.
   \item Depending on $\langle o,b\rangle$, the next position is determined as follows:
    
    \begin{enumerate}
     \item If $o \in [0,k]$, consider the next position of $P^{(k)}_l=e(P^{(k+1)})$ in $\mathcal{G}_k$ along $Q,\rho', \langle o,b \rangle$.
     
     If there is none, by induction, \textbf{Prover} loses $\mathcal{G}_k$.
     In this case, the play of $\mathcal{G}_{k+1}$ ends, and \textbf{Prover} loses.
     
     Otherwise, let $P^{(k)}_{l+1}$ be the next position of $P^{(k)}_{l}$ in $\mathcal{G}_{k}$.
     Then the next position in $\mathcal{G}_{k+1}$ is $(P^{(k)}_i)_{i=1}^{l+1} \in \mathcal{P}_{k+1}$.

    \item If $o=k+1$, first check whether $b \geq l+1$. (Recall that $l+1$ is the length of $P^{(k+1)}$.)
    If so, the play ends, and \textbf{Prover} loses. 
    Otherwise, we have a position $P^{(k)}_b$ in $\mathcal{G}_k$. 
    Consider the next position of $P^{(k)}_{b}$ in $\mathcal{G}_k$ along $Q,\rho',\langle 0,0\rangle$.
    If there is none, by induction, \textbf{Prover} loses $\mathcal{G}_k$.
    In this case, the play ends and \textbf{Prover} loses $\mathcal{G}_{k+1}$.
    Otherwise, let $Q^{(k)}_{b} \in \mathcal{P}_{k}$ be the next position.
    Now, set the next position in $\mathcal{G}_{k+1}$ as $(P^{(k)}_1,\ldots,P^{(k)}_{b-1},Q^{(k)}_{b})$.
    
    \end{enumerate}
    \end{enumerate}
 \end{enumerate}

 In the course of the play of $\mathcal{G}_{k+1}$, sequences on $\mathcal{P}_{k+1}$ are generated.
 We call each of them \deff{a play of $\mathcal{G}_{k+1}$}, and $\mathcal{S}_{k+1}$ denotes the set of all the plays of $\mathcal{G}_{k+1}$.
\end{defn}

\begin{rmk}\label{IntuitiononDefGk}
The intuitions behind each item in Definition \ref{DefGk} are as follows (the numbering of the following items respects that in the definition):
\begin{enumerate}
 \item The aims of \textbf{Prover} and \textbf{Delayer} are the same as Remark \ref{IntuitiononDefG1}.
 \item A position of $\mathcal{G}_{k+1}$ is a partial record of a play of $\mathcal{G}_{k}$.
 \item \textbf{Delayer} starts the play in the same way as Remark \ref{IntuitiononDefG1}.
 \item The game is basically the same as $\mathcal{G}_{k}$; positions in $\mathcal{G}_{k}$ are concatenated one by one to the record, which is a position of $\mathcal{G}_{k+1}$.
 
 However, \textbf{Prover} has another option now, corresponding to $o=k+1$: \textbf{Prover} can choose to backtrack the game record to the past position $P^{(k)}_{b}$ and restart the play from there by adding one child to the current frontier, bringing back the current $\rho'$ and replacing $P^{(k)}_{b}$ by the next position.  
\end{enumerate}
\end{rmk}

We first observe that $\mathcal{G}_{k+1}$ is determined.
\begin{defn}
Recall $\widetilde{O}_{1}(P^{(1)}):=\omega \cdot O_{1}(P^{(1)})$ in the proof of Proposition \ref{DelayerwinsG1}, where $O_{1}(P^{(1)})$ is defined in Definition \ref{DefO1}.

Inductively on $k \geq 1$, for each $P^{(k+1)}=(P^{(k)}_{i})_{i=0}^{l} \in \mathcal{P}_{k+1}$, define
\begin{align}\label{defofOk}
 \widetilde{O}_{k+1}(P^{(k+1)}) := \sum_{i=0}^{l} 2^{\widetilde{O}_{k}(P^{(k)}_{i})} + 2^{\widetilde{O}_{k}(P^{(k)}_{l})}.
\end{align}
Here, the sums in the RHS are all the natural sum of base $2$.
\end{defn}

%\begin{prop}
 %   For $T,T' \in [1,\omega^{h}]$, if $O_1(T)=O_1(T')$, then $T \simeq T'$ as posets.

  %  Furthermore, for each $\alpha \in [1,\NN^h]$, there exists $T \in [1,\omega^{h}]$ such that $O_1(T)=\alpha$.
%\end{prop}

\begin{lemma}\label{monotonicityforGk}
Let $k \geq 0$.
Assume that $Q^{(k+1)}$ is a next position of $P^{(k+1)}$ in $\mathcal{G}_{k+1}(\prec,h)$.
Then we have $\widetilde{O}_{k+1}(Q^{(k+1)})+\omega \leq \widetilde{O}_{k+1}(P^{(k+1)})$. 
Here, the sum in the LHS is the ordinal sum.
\end{lemma}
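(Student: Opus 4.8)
The plan is to prove Lemma~\ref{monotonicityforGk} by induction on $k\ge 0$, following the same recursion that defines $\mathcal{G}_{k+1}$ and $\widetilde{O}_{k+1}$ from $\mathcal{G}_{k}$ and $\widetilde{O}_{k}$. For the base case $k=0$ (the game $\mathcal{G}_1$) the statement is essentially the computation closing Proposition~\ref{DelayerwinsG1}: since $\widetilde{O}_1(P)=\omega\cdot O_1(P)$ and Lemma~\ref{clockismonotone} gives $O_1(Q^{(1)})<O_1(P^{(1)})$, i.e. $O_1(Q^{(1)})+1\le O_1(P^{(1)})$, we obtain $\widetilde{O}_1(Q^{(1)})+\omega=\omega\cdot(O_1(Q^{(1)})+1)\le\widetilde{O}_1(P^{(1)})$.

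Before the inductive step I would record one preliminary fact by a parallel induction: $\widetilde{O}_j(P)\ge\omega$ for every $j$ and every position $P$. Indeed $\widetilde{O}_1=\omega\cdot O_1\ge\omega$ because $O_1=T\ge 1$, and in the recursion~\eqref{defofOk} each summand $2^{\widetilde{O}_k(\cdot)}\ge 2^{\omega}=\omega$. This is the point that makes the exponent arithmetic work: every exponent $x=\widetilde{O}_k(\cdot)$ satisfies $x\ge\omega$, so writing $x=\omega\cdot\delta+n$ with $\delta\ge 1$ and $n<\omega$ we get that $2^{x}=(2^{\omega})^{\delta}\cdot 2^{n}=\omega^{\delta}\cdot 2^{n}$ is a single Cantor term.

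For the inductive step I would read the two transition types of $\mathcal{G}_{k+1}$ off Definition~\ref{DefGk}, compute $\widetilde{O}_{k+1}$ of the old and new positions via~\eqref{defofOk}, isolate the common natural-sum prefix $B$, and reduce both cases to the single core inequality
\[
B\oplus 2^{x}\oplus 2^{x}+\omega\ \le\ B\oplus 2^{y}\qquad\text{whenever } x+\omega\le y,\ x\ge\omega,\ B\in ON,
\]
where $\oplus$ is the natural sum. When $o\in[0,k]$ a new last entry $P^{(k)}_{l+1}$ is appended, so with $B$ the prefix through index $l$ one takes $x=\widetilde{O}_k(P^{(k)}_{l+1})$ and $y=\widetilde{O}_k(P^{(k)}_l)$, and the induction hypothesis for $\mathcal{G}_k$ supplies $x+\omega\le y$; the essential mechanism is that the doubling of the final entry in~\eqref{defofOk} now attaches to the smaller value. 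When $o=k+1$ the play backtracks to some index $b\le l$, replacing $P^{(k)}_b$ by its $\mathcal{G}_k$-successor $Q^{(k)}_b$; here $B$ is the prefix through index $b-1$, $x=\widetilde{O}_k(Q^{(k)}_b)$, $y=\widetilde{O}_k(P^{(k)}_b)$ with $x+\omega\le y$ again from the hypothesis, and the discarded tail contributes a natural-sum summand $R\ge 0$ on the right which is harmless because $\oplus$ is inflationary ($B\oplus 2^{y}\le B\oplus 2^{y}\oplus R$). The core inequality itself follows from the single-term description of $2^{x}$: since $2^{x}\ge\omega$ we have $2^{x}\oplus 2^{x}\oplus\omega\le 2^{x}\cdot 3=\omega^{\delta}\cdot(3\cdot 2^{n})<\omega^{\delta+1}=2^{\omega\cdot(\delta+1)}=2^{x+\omega}\le 2^{y}$, and then $(B\oplus 2^{x}\oplus 2^{x})+\omega\le B\oplus 2^{x}\oplus 2^{x}\oplus\omega\le B\oplus 2^{y}$ using $\alpha+\beta\le\alpha\oplus\beta$ and monotonicity of $\oplus$.

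The hard part is purely the ordinal bookkeeping, not the game theory: one must keep the ordinary sum ``$+\omega$'' of the statement separate from the natural sum of~\eqref{defofOk}, and the whole argument hinges on first proving $\widetilde{O}_j\ge\omega$ so that every exponent is infinite and $2^{x}$ collapses to one Cantor term (equivalently $2^{x}\oplus 2^{x}=2^{x+1}$). The only case-analysis care needed is tracking which sequence entry carries the extra doubling term after appending versus after backtracking; once that is pinned down, both transitions collapse to the same inequality above.
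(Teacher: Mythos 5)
Your proposal is correct and takes essentially the same route as the paper's proof: induction on $k$ with the base case delegated to the computation in Proposition \ref{DelayerwinsG1}, a case split between appending ($o\in[0,k]$) and backtracking ($o=k+1$), and the same ordinal arithmetic ($2^{x+\omega}=2^{x}\cdot\omega$, monotonicity of the natural sum, and $\alpha+\beta\leq\alpha\oplus\beta$), with your ``core inequality'' merely packaging into one statement the two inequality chains the paper writes out separately. Your explicit preliminary fact $\widetilde{O}_{j}\geq\omega$ is a slightly more careful justification than the paper's remark that $2^{\widetilde{O}_{k}(P^{(k)}_{l+1})}>0$ (and is indeed what the natural-sum reading of that step requires), but it does not change the nature of the argument.
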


\begin{proof}
By induction on $k$.
The case when $k=0$ is already dealt with in the proof of Proposition \ref{DelayerwinsG1}.

Consider the case when $k \geq 1$.
Let $\langle o,b \rangle$ be the option \textbf{Prover} made to transition to $Q^{(k+1)}$.
Let $P^{(k+1)}=(P_{0}^{(k)}, \ldots, P_{l}^{(k)})$.
If $o \leq k$, $Q^{(k+1)}$ is of the form $(P_{0}^{(k)}, \ldots, P_{l+1}^{(k)})$.
Therefore, by the definition of $\widetilde{O}$ and induction hypothesis,
\begin{align*}
 \widetilde{O}_{k+1}(P^{(k+1)}) =& \sum_{i=0}^{l} 2^{\widetilde{O}_{k}(P^{(k)}_{i})} + 2^{\widetilde{O}_{k}(P^{(k)}_{l})}\\
 \geq & \sum_{i=0}^{l} 2^{\widetilde{O}_{k}(P^{(k)}_{i})} + 2^{\widetilde{O}_{k}(P^{(k)}_{l+1})+\omega}\\
 =& \sum_{i=0}^{l} 2^{\widetilde{O}_{k}(P^{(k)}_{i})} + 2^{\widetilde{O}_{k}(P^{(k)}_{l+1})}\cdot \omega \\
 \geq &\sum_{i=0}^{l} 2^{\widetilde{O}_{k}(P^{(k)}_{i})} + 2^{\widetilde{O}_{k}(P^{(k)}_{l+1})}\cdot 2 + \omega \\
 = & \widetilde{O}_{k+1}(Q^{(k+1)})+\omega.
\end{align*}
Note that $2^{\widetilde{O}_{k}(P^{(k)}_{l+1})} >0$ for the last inequality.

If $o=k+1$, then $Q^{(k+1)}$ is of the form $(P_{0}^{(k)}, \ldots, P_{b-1}^{(k)}, Q^{(k)}_{b})$, where $b\leq l$, and $Q^{(k)}_{b}$ is a next position of $P_{b}^{(k)}$.
Therefore, by the definition of $\widetilde{O}$ and induction hypothesis,
\begin{align*}
 \widetilde{O}_{k+1}(P^{(k+1)}) =& \sum_{i=0}^{l} 2^{\widetilde{O}_{k}(P^{(k)}_{i})} + 2^{\widetilde{O}_{k}(P^{(k)}_{l})}\\
 > & \sum_{i=0}^{b} 2^{\widetilde{O}_{k}(P^{(k)}_{i})}\\
 \geq & \sum_{i=0}^{b-1} 2^{\widetilde{O}_{k}(P^{(k)}_{i})} + 2^{\widetilde{O}_{k}(Q^{(k)}_{b})+\omega}\\
  \geq & \sum_{i=0}^{b-1} 2^{\widetilde{O}_{k}(P^{(k)}_{i})} + 2^{\widetilde{O}_{k}(Q^{(k)}_{b})}\cdot \omega\\
 \geq & \widetilde{O}_{k+1}(Q^{(k+1)})+\omega.
\end{align*}
\end{proof}

\begin{lemma}
For any $h \in \NN$, $\mathcal{G}_{k+1}(\prec,h)$ ends within finitely many steps, determining the winner.
\end{lemma}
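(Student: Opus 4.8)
The plan is to reduce the claim to the monotonicity already established in Lemma \ref{monotonicityforGk}, exactly as Corollary \ref{G1determined} was deduced from Lemma \ref{clockismonotone} in the base case. First I would note that, by inspection of Definition \ref{DefGk}, whenever the game $\mathcal{G}_{k+1}(\prec,h)$ terminates it is because one of the explicit ``ending'' conditions is triggered (\textbf{Delayer} cannot answer consistently, \textbf{Prover} plays an illegal backtracking target $b \geq l+1$, or the recursive call to $\mathcal{G}_{k}$ itself ends), and in every such case the rule assigns a winner. Hence it suffices to show that only finitely many transitions can occur before such an ending condition is forced.

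The key step is the observation that $\widetilde{O}_{k+1}$ is a strictly descending ordinal-valued measure along any play. Concretely, suppose a play of $\mathcal{G}_{k+1}(\prec,h)$ produces positions $P^{(k+1)}_{0}, P^{(k+1)}_{1}, \ldots$, where each $P^{(k+1)}_{j+1}$ is a next position of $P^{(k+1)}_{j}$. By Lemma \ref{monotonicityforGk} applied to each transition, we have
\[
\widetilde{O}_{k+1}(P^{(k+1)}_{j+1}) + \omega \leq \widetilde{O}_{k+1}(P^{(k+1)}_{j}),
\]
so in particular $\widetilde{O}_{k+1}(P^{(k+1)}_{j+1}) < \widetilde{O}_{k+1}(P^{(k+1)}_{j})$ for every $j$ for which the transition is defined. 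Thus the sequence $\bigl(\widetilde{O}_{k+1}(P^{(k+1)}_{j})\bigr)_{j}$ is a strictly decreasing sequence of ordinals, and by well-foundedness of the ordinals it cannot be infinite. Therefore the play must reach a position from which no legal next position exists, i.e. an ending condition is triggered after finitely many steps.

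I would present this as a short induction-free argument resting entirely on Lemma \ref{monotonicityforGk}, since the inductive work (handling the two cases $o \leq k$ and $o = k+1$ and the recursive calls into $\mathcal{G}_{k}$) has already been absorbed into that lemma. The only point requiring a sentence of care is the verification that termination of the play genuinely coincides with one of the winner-declaring clauses rather than a silent stalling: one should check that if the current position admits \emph{some} legal \textbf{Prover} move at step (\ref{nextmovek+1}) and \textbf{Delayer} can still answer at step (\ref{answerk+1}), then a next position is defined, so that the game only halts via an explicit clause. Given this, the descending-measure argument closes the proof. I do not expect a serious obstacle here; the substantive content was Lemma \ref{monotonicityforGk}, and the remaining task is the standard deduction that a strictly decreasing ordinal sequence is finite together with the bookkeeping that each halt is a decision.
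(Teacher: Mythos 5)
Your proposal is correct and is essentially the paper's own argument: the paper proves this lemma simply by citing Lemma \ref{monotonicityforGk}, with the details you spell out (strictly decreasing ordinal measure along any play, hence finiteness by well-foundedness, plus the observation that every halting clause names a winner) being exactly the reasoning the paper makes explicit in the base case, Corollary \ref{G1determined}. No gap; your extra care about silent stalling versus winner-declaring termination is a fair point the paper handles only implicitly.
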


\begin{proof}
By Lemma \ref{monotonicityforGk}.
\end{proof}

\begin{prop}\label{DelayerwinsGk+1}
Let $h > 0$. If $|{\prec}| >  2_{k}(\omega^{h+1}) \cdot 2 +\omega$, then \textbf{Delayer} has a winning strategy for $\mathcal{G}_{k+1}(\prec,h)$.
\end{prop}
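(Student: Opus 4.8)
The plan is to imitate the proof of Proposition \ref{DelayerwinsG1}, by induction on $k$ (the base case $\mathcal{G}_1$ being Proposition \ref{DelayerwinsG1}, whose bound $\omega^{h+1}+\omega$ is below $2_0(\omega^{h+1})\cdot 2+\omega$). Identifying each natural number with its order type under $\prec$, I would have \textbf{Delayer} maintain one order-theoretic invariant: every element currently claimed to lie in $X$ — i.e. every $x$ with value $1$ in the $\rho$-component buried at the last coordinate $e(P^{(k+1)})$ — satisfies $\Phi(P^{(k+1)})+\omega \preceq x$, for a suitable potential $\Phi\colon \mathcal{P}_{k+1}\to ON$. \textbf{Delayer}'s concrete moves are the obvious generalization of the base case: answer every genuinely new query with $0$, re-assert whichever already-claimed element is needed as a $\prec$-predecessor, and introduce a fresh minimal claim only when forced, namely when \textbf{Prover} queries the current $\prec$-minimal claimed element, placing that fresh claim in the gap $\left[\Phi(\text{next position}),\Phi(\text{next position})+\omega\right[$, which is nonempty since only finitely many elements have been used.

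The first step is to isolate the right $\Phi$. The naive choice $\Phi=\widetilde{O}_{k+1}$ handles the two easy move-types (a $\mathcal{G}_k$-step $o\le k$, where $\widetilde{O}_{k+1}$ strictly drops by $\ge\omega$ and the minimal claim drops with it exactly as before), but it is wasteful. The essential new phenomenon, and the reason a smaller threshold is attainable, is the level-$(k+1)$ backtracking move $o=k+1$: when \textbf{Prover} rewinds the record to index $b$, all claims recorded in the discarded coordinates $P^{(k)}_{b+1},\ldots,P^{(k)}_l$ are forgotten, so the $\prec$-minimal surviving claim jumps back up to the minimum of the retained binding. I would therefore base $\Phi$ on the clock of the \emph{last surviving coordinate}, so that this upward jump of the minimal claim is matched by an upward jump of $\Phi$; the invariant then survives backtracking precisely because backtracking \emph{recovers} order-type budget rather than consuming it. Termination — so that \textbf{Delayer} must survive only finitely many rounds and $\Phi$ never needs to dip below $0$ — is supplied for free by Lemma \ref{monotonicityforGk}. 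This is why $\widetilde{O}_{k+1}$ and $\Phi$ play separate roles: the former bounds the length of play, the latter controls \textbf{Delayer}'s bookkeeping.

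The second step is to verify that $\Phi$ is preserved under each of \textbf{Prover}'s options. For $o\le k$ this should reduce, by unfolding the $\mathcal{G}_k$-transition, to the inductive hypothesis applied one level down; for $o=k+1$ it is the recovery computation above, whose quantitative content is exactly the inequality $\widetilde{O}_k(Q^{(k)}_b)+\omega \le \widetilde{O}_k(P^{(k)}_b)$ of Lemma \ref{monotonicityforGk}. Finally one evaluates $\Phi$ at the initial position $P^{(k+1)}_0=(P^{(k)}_0)$ and checks it equals the stated value $2_k(\omega^{h+1})\cdot 2$, using $\widetilde{O}_1(P^{(1)}_0)=\omega^{h+1}$ and the base-$2$ exponential recursion defining the $\widetilde{O}$'s; since $|{\prec}|$ exceeds this, the element of order type $2_k(\omega^{h+1})\cdot 2+\omega$ needed for \textbf{Delayer}'s opening claim exists.

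I expect the main obstacle to be pinning down $\Phi$ so that it is simultaneously (i) recoverable under backtracking — the order types freed by the discarded coordinates must be exactly those needed to re-seat future minimal claims — and (ii) of the correct initial magnitude, without the factor-of-$2$ slack hidden in the ``doubled last term'' of $\widetilde{O}_{k+1}$ compounding through the nested exponentials and inflating the bound from $2_k(\omega^{h+1})\cdot 2$ to something of order $2_k(\omega^{h+1})^2\cdot 2$. Equally delicate is that the options $o\le k$ are themselves $\mathcal{G}_k$-backtracks acting on the nested structure inside $e(P^{(k+1)})$, so the recovery bookkeeping must be run uniformly across all $k$ levels at once; getting the induction to package this cleanly, rather than tracking a separate potential at each level, is where the real work lies.
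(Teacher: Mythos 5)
Your proposal has a genuine gap, and it sits exactly at the step you advertise as the main novelty. The paper's proof is precisely the ``naive choice'' you dismiss as wasteful: \textbf{Delayer} runs the strategy of Proposition \ref{DelayerwinsG1} verbatim with $\widetilde{O}_{k+1}$ in place of $\widetilde{O}_{1}$, and Lemma \ref{monotonicityforGk} guarantees that this potential drops by at least $\omega$ at \emph{every} transition, the backtracking move $o=k+1$ included, so the invariant survives backtracking with no extra work. Your replacement --- a potential $\Phi$ based on the last surviving coordinate, which \emph{increases} (``recovers budget'') when \textbf{Prover} backtracks --- cannot maintain the invariant. The reason is the order of moves inside a round: by Definition \ref{DefGk}, \textbf{Delayer} commits to $\rho'$ \emph{before} \textbf{Prover} reveals $\langle o,b\rangle$, and whenever \textbf{Prover} has queried the current $\prec$-minimal claimed element, $\rho'$ is forced to contain a fresh claim strictly below it, i.e.\ near the current (small) potential. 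A backtrack does \emph{not} discard this $\rho'$: the new position is the next position of $P^{(k)}_{b}$ along $Q,\rho',\langle 0,0\rangle$, so $\rho'$ is merged into the backtracked position's predicate (``bringing back the current $\rho'$,'' Remark \ref{IntuitiononDefGk}). Hence after a backtrack the minimal surviving claim is not ``the minimum of the retained binding'' as you assert; it is the fresh claim just made, which would then lie below your recovered $\Phi+\omega$, destroying the invariant. Since \textbf{Prover} can query the current minimum at every round, any sound bookkeeping must place fresh claims below all existing ones yet above every potential reachable by a future backtrack; this forces the effective potential to be non-increasing along the play, which is exactly the property $\widetilde{O}_{k+1}$ (the sum over \emph{all} coordinates with doubled last term) is engineered to have, and exactly the property your recovery design gives up.

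On the quantitative side you did spot something real: the initial value of the paper's potential is $A_{k+1}$, where $A_{1}=\omega^{h+1}$ and $A_{j+1}=2^{A_{j}}\cdot 2$, and for $k\geq 2$ this exceeds $2_{k}(\omega^{h+1})\cdot 2$ (for instance $A_{3}=\bigl(2_{2}(\omega^{h+1})\bigr)^{2}\cdot 2$), so the paper's one-line proof literally yields the proposition only with this slightly inflated threshold. But the slack is harmless for the Corollary, since both bounds have supremum $\omega_{k+2}$ as $h\to\infty$, and repairing the stated constant --- if one insists on it --- must be done with a potential that still decreases at every move, not with one that recovers under backtracking. As written, your argument fails at the $o=k+1$ step and does not constitute a proof.
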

\begin{proof}
The proof is analogous to Proposition \ref{DelayerwinsG1}. 
Just replace $\widetilde{O}_{1}$ with $\widetilde{O}_{k+1}$ and use Lemma \ref{monotonicityforGk}.
\end{proof}

Towards analysis of $\mathcal{G}_{k+1}$, we introduce the following notations:
below, we put superscripts $\vec{(\cdot)}^{(i)}$ on sequences of positions in $\mathcal{G}_{i}$.
Note that, subsequences of a play in $\mathcal{G}_{i}$, which are sequences of positions in $\mathcal{G}_{i}$, are, at the same time, positions of $\mathcal{G}_{i+1}$.
Thus, the both superscripts $\vec{(\cdot)}^{(i)}$ and $(i+1)$ are valid, and we separate the usages according to which ``type'' we consider at each point of the argument.
\begin{defn}
Set 
\begin{align*}
T_{1}&\colon \mathcal{P}_{1} \rightarrow [1,\omega^{h}];\ (T,\rho) \mapsto T,\\
R_{1}&\colon \mathcal{P}_{1} \rightarrow \mathcal{R};\ (T,\rho) \mapsto \rho.
\end{align*}
For $k+1 \geq 2$, set
\begin{align*}
T_{k+1}&\colon \mathcal{P}_{k+1} \rightarrow [1,\omega^{h}];\ \vec{P}^{(k)} \mapsto T_{k}(e(\vec{P}^{(k)})),\\
R_{k+1}&\colon \mathcal{P}_{k+1} \rightarrow \mathcal{R};\ \vec{P}^{(k)} \mapsto R_{k}(e(\vec{P}^{(k)})).
\end{align*}
(cf. Definition \ref{Defe})

\end{defn}

The following observations clarify the structure of a position of $\mathcal{G}_{k+1}$:

\begin{lemma}\label{structureofGk}
Let $k \geq 1$.
Let $\vec{P}^{(k+1)}=(P^{(k+1)}_{0},\ldots, P^{(k+1)}_{l_{k+1}})\in \mathcal{S}_{k+1}$.
  For $j =k,\ldots,1$, let $\vec{P}^{(j)}:=(P^{(j)}_{0}, \ldots, P^{(j)}_{l_{j}}):=e^{k+1-j}(\vec{P}^{(k+1)}) = e^{k-j}(P^{(k+1)}_{l_{k+1}})$, where $e^{K}$ is the iterated composition of $e$ with $K$-times.
  Note that $e^{0}$ is the identity.
  
 % If $i<j$, then $R_{k+1}(P^{(k+1)}_{i}) \subseteq R_{k+1}(P^{(k+1)}_{j})$.
 
 Consider $\mathcal{G}_{k+1}$, and suppose the play so far is $\vec{P}^{(k+1)}=(P^{(k+1)}_{0},\ldots, P^{(k+1)}_{l_{k+1}})$, where $P^{(k+1)}_{l_{k+1}}$ is the current position.
If \textbf{Prover} chooses an option $\langle o,s \rangle \in [0,k+1] \times \omega^{<\omega}$ and the next position $P^{(k+1)}_{\star}$ exists, then the following hold:
  \begin{enumerate}
   \item\label{basicoptions} If $o=0,1$, then $e^{k-j}(P^{(k+1)}_{\star})$ is a prolongation of $\vec{P}^{(j)}$ by a new element for any $j \in [1,k]$.
   \item If $o \in [2,k+1]$, then:
    \begin{enumerate}
    \item\label{backtrackingasbasicoptions} for $j \in [o,k]$, $e^{k-j}(P^{(k+1)}_{\star})$ is a prolongation of $\vec{P}^{(j)}$ by a new element.
    \item\label{backtracking} for $j =o-1 \in [1,k]$, $e^{k-j}(P^{(k+1)}_{\star})$ is of the form $(P^{(o-1)}_{0}, \ldots, P^{(o-1)}_{s-1}, Q^{(o-1)}_{s})$, where $s \leq l_{j}$ and $Q^{(o-1)}_{s}$ is a next position of $P^{(o-1)}_{s}$. (Note that $s$ is specified by \textbf{Prover} as the second component of $\langle o,s \rangle$.)
    \item\label{insideofbacktracking} for $j \in [1,o-2]$, $e^{k-j}(P^{(k+1)}_{\star})=e^{o-2-j}(Q^{(o-1)}_{s})$ is a prolongation of $e^{o-2-j}(P^{(o-1)}_{s})$ by a new element.
    \end{enumerate}
  \end{enumerate}
  
Furthermore, for each $j \in [1,k]$ and $r < l_{j}$, there exists $s \leq l_{k+1}$ such that $P_{r}^{(j)}=e^{k+1-j}(P^{(k+1)}_{s})$.

\end{lemma}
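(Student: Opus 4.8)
The plan is to separate the statement into two parts and prove each by its own induction, both resting on the same case analysis of a single \textbf{Prover} move as given in Definition \ref{DefGk}. The first part is the transition description (items 1 and 2), and the second is the final (``Furthermore'') assertion. The key preliminary observation is that items 1 and 2 refer only to the drilled-down sequences $e^{k-j}(P^{(k+1)}_{l_{k+1}})$ and $e^{k-j}(P^{(k+1)}_{\star})$, hence depend solely on the current position $P^{(k+1)}_{l_{k+1}}$ and on the chosen $\langle o,s\rangle$, not on the history of the play; so it is enough to analyze one transition, and the fact that a position of $\mathcal{G}_{k+1}$ is only a subsequence of a $\mathcal{G}_{k}$-play (not necessarily a genuine one) causes no trouble.

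I would prove items 1 and 2 by induction on $k \geq 1$. In the base case $k=1$, the index $j$ ranges over $\{1\}$ and $e^{k-j}=e^{0}$ is the identity, so the claim is read off directly from the transitions of $\mathcal{G}_{2}$: option $o\in\{0,1\}$ prolongs the top sequence by one element (item 1), while $o=2$ yields the truncated-and-extended sequence $(P^{(1)}_{0},\ldots,P^{(1)}_{s-1},Q^{(1)}_{s})$ (item 2(b), the two remaining ranges of $j$ being empty). For the inductive step, assume the statement for $\mathcal{G}_{k}$. If $o\in[0,k]$, Definition \ref{DefGk} passes the move to $\mathcal{G}_{k}$: $P^{(k+1)}_{\star}$ prolongs $P^{(k+1)}_{l_{k+1}}$ by the $\mathcal{G}_{k}$-next-position of its last entry $P^{(k)}_{l_{k}}$, which gives the prolongation at level $k$; for $j<k$ one has $e^{k-j}(P^{(k+1)}_{\star})=e^{(k-1)-j}(P^{(k)}_{l_{k}+1})$, and since $P^{(k)}_{l_{k}}$ is a reachable $\mathcal{G}_{k}$-position I apply the induction hypothesis to any genuine $\mathcal{G}_{k}$-play ending at $P^{(k)}_{l_{k}}$ (its active branches coincide with $\vec{P}^{(j)}$, both equalling $e^{(k-1)-j}(P^{(k)}_{l_{k}})$), obtaining the stated ranges after combining with level $k$. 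If $o=k+1$, the definition performs the top-level backtracking, which is exactly item 2(b) for $j=o-1=k$; the new last entry $Q^{(k)}_{s}$ is the $\mathcal{G}_{k}$-next-position of $P^{(k)}_{s}$ under $\langle 0,0\rangle$, so the induction hypothesis with option $0$ applied to a $\mathcal{G}_{k}$-play ending at $P^{(k)}_{s}$ produces the prolongation at every level $j\in[1,k-1]=[1,o-2]$, i.e.\ item 2(c).

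For the final assertion I would use strong induction on the play length $l_{k+1}$, tracking via items 1 and 2 how the active branches $\vec{P}^{(j)}$ evolve under the last move. The length-$0$ case is vacuous. In the inductive step, write $P^{(k+1)}_{l_{k+1}}$ as the result of a move $\langle o,s\rangle$ from $P^{(k+1)}_{l_{k+1}-1}$ and split by the case analysis. At a level $j$ where the move is a prolongation (item 1 or 2(a)), each new before-last entry is either an old before-last entry, covered by the hypothesis for the shorter play, or the old last entry, which equals $e^{k+1-j}(P^{(k+1)}_{l_{k+1}-1})$. At the backtracked level $j=o-1$ (item 2(b)) the before-last entries form the preserved prefix and are again covered by the shorter play. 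The delicate case is the deep levels $j\in[1,o-2]$ (item 2(c)): there the new branch is a prolongation not of the previous branch but of $e^{o-2-j}(P^{(o-1)}_{s})$; using the hypothesis for the shorter play to write $P^{(o-1)}_{s}=e^{k+2-o}(P^{(k+1)}_{s'})$ for some $s'<l_{k+1}$, one computes $e^{o-2-j}(P^{(o-1)}_{s})=e^{k-j}(P^{(k+1)}_{s'})$, and every entry of this sequence, last one included, has the required form $e^{k+1-j}(P^{(k+1)}_{s''})$ with $s''\leq s'$ by the strong induction hypothesis applied to the prefix play $\vec{P}^{(k+1)}_{\leq s'}$.

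I expect the main obstacle to be precisely this last case: a backtracking move makes the deep active branches jump to those attached to an \emph{earlier} top-level position, so the immediately preceding play does not suffice and one must invoke the claim for an arbitrary earlier prefix play, which is exactly why the second induction has to be a strong induction rather than an ordinary one. The remaining effort is routine but careful bookkeeping of the iterated operators $e^{K}$ and of the type identification between a $\mathcal{G}_{j+1}$-position and a sequence of $\mathcal{G}_{j}$-positions.
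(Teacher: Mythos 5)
Your treatment of items 1 and 2 coincides with the paper's own proof: the same induction on $k$, the same base case $k=1$ read off from the transitions of $\mathcal{G}_{2}$, and the same case split in the inductive step --- options $o \in [0,k]$ are delegated to $\mathcal{G}_{k}$ and handled by the induction hypothesis applied to the last entry $P^{(k)}_{l_{k}}$, while $o=k+1$ gives item 2(b) for $j=k$ directly from Definition \ref{DefGk} and item 2(c) by applying the induction hypothesis to $P^{(k)}_{s}$, $Q^{(k)}_{s}$ and the option $\langle 0,0 \rangle$ (your preliminary remark that only the current position matters, so that any genuine $\mathcal{G}_{k}$-play ending at the relevant entry may be used, is exactly the justification the paper leaves tacit). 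The difference lies in the ``Furthermore'' clause: the paper announces that ``the whole statement'' is proved by induction on $k$, but its written argument covers only the transition items and never returns to that clause, whereas you prove it by a separate strong induction on the play length $l_{k+1}$. Your argument there is correct, and you correctly isolate the one case where induction on just the immediately preceding play would fail: after a backtracking move, the deep branches $j \in [1,o-2]$ become prolongations of $e^{o-2-j}(P^{(o-1)}_{s}) = e^{k-j}(P^{(k+1)}_{s'})$ for some earlier index $s' < l_{k+1}$, so the claim must be invoked for the prefix play $\vec{P}^{(k+1)}_{\leq s'}$, which is available only under strong induction over all shorter prefixes. Since this clause is precisely what the proof of Theorem \ref{ProofIsStrategyGk} uses through condition $(\dag\dag)$ (entries of lower-level branches must be drill-downs of entries of the top-level play), spelling out its proof is a genuine completion of the paper's argument rather than a deviation from it.
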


\begin{proof}
We prove the whole statement by induction on $k \geq 1$.

When $k=1$, $\mathcal{G}_{k+1}$ in concern is $\mathcal{G}_{2}$. 
%Since partial predicates $\rho$ grow monotonously in a course of a play of $\mathcal{G}_{1}$, we have $R_{2}(P^{(2)}_{i}) \subseteq R_{2}(P^{(2)}_{j})$ ($i<j$).
Furthermore, \textbf{Prover} has only $o=0,1, 2$ as their options.
If \textbf{Prover} chooses $o=0,1$, then the next position $P^{(2)}_{\star}$ is a prolongation of $P^{(2)}_{l_{2}}$ following a play of $\mathcal{G}_{1}$.
If \textbf{Prover} chooses $o=2$, then the next position $P^{(2)}_{\star}$ has a form 
\[P^{(2)}_{\star} = (P^{(1)}_{0}, \ldots, P^{(1)}_{s-1}, Q^{(1)}_{s}),\]
where $P^{(2)}_{l_{2}}$ is in the form of 
\[P^{(2)}_{l_{2}} = (P^{(1)}_{0}, \ldots, P^{(1)}_{l_{1}}),\]
$s \leq l_{1}$, and $Q^{(1)}_{s}$ is a next position of $P^{(1)}_{s}$ in $\mathcal{G}_{1}$.

Next, we consider general $k > 1$.
Suppose the play so far is $\vec{P}^{(k+1)}=(P^{(k+1)}_{0},\ldots, P^{(k+1)}_{l_{k+1}})$, where $P^{(k+1)}_{l_{k+1}}$ is the current position.
Furthermore, suppose \textbf{Prover} chooses an option $\langle o,s \rangle \in [0,k+1] \times \omega^{<\omega}$ and the next position $P^{(k+1)}_{\star}$ exists.

  \begin{itemize}
   \item If $o \in [0,k]$, then $P^{(k+1)}_{\star}=e^{0}(P^{(k+1)}_{\star})$ is a prolongation of $P^{(k+1)}_{l_{k+1}}=e^{0}(P^{(k+1)}_{l_{k+1}})$ following a play of $\mathcal{G}_{k}$.
   In particular, $e(P^{(k+1)}_{\star})$ is a next position of $e(P^{(k+1)}_{l_{k+1}})$ in $\mathcal{G}_{k}$.
   Based on this observation, the items (\ref{basicoptions})-(\ref{insideofbacktracking}) follow immediately from Induction Hypothesis.
   
 %  Therefore, if $o=0,1$, by Induction Hypothesis, then $e^{k-j}(P^{(k+1)}_{\star})=e^{k-1-j}(e(P^{(k+1)}_{\star}))$ is a prolongation of $e^{k-j}(P^{(k+1)}_{l_{k+1}})=e^{k-1-j}(e(P^{(k+1)}_{l_{k+1}}))$ by a new element for any $j \in [1,k-1]$.
  % This finishes a proof of the item (\ref{basicoptions}) in the lemma.
  % Otherwise, $o \in [2,k] \subsetneq [2,k+1]$.
  % \begin{itemize}
  %  \item for $j \in [o,k-1]$, by Induction Hypothesis, $e^{k-j}(P^{(k+1)}_{\star})=e^{k-1-j}(e(P^{(k+1)}_{\star}))$ is a prolongation of $e^{k-j}(P^{(k+1)}_{l_{k+1}})=e^{k-1-j}(e(P^{(k+1)}_{l_{k+1}}))$ by a new element.
  %  \item for $j =o-1 \in [1,k-1]$, $e^{k-j}(P^{(k+1)}_{\star})=e^{k-1-j}(e(P^{(k+1)}_{\star}))$ is of the form $(P^{(o-1)}_{0}, \ldots, P^{(o-1)}_{s-1}, Q^{(o-1)}_{s})$, where $s < l_{j}$ and $Q^{(o-1)}_{s}$ is a next position of $P^{(o-1)}_{s}$.
  %  \item for $j \in [1,o-2]$, by Induction Hypothesis, $e^{k-j}(P^{(k+1)}_{\star})=e^{k-1-j}(e(P^{(k+1)}_{\star}))=e^{o-2-j}(Q^{(o-1)}_{s})$ is a prolongation of $e^{o-2-j}(P^{(o-1)}_{s})$ by a new element.
  %  \end{itemize}

  \item Assume $o=k+1$.
  Then, by Definition \ref{DefGk}, $P^{(k+1)}_{\star}=e^{0}(P^{(k+1)}_{\star})$ has a form
  \[P^{(k+1)}_{\star}= (P^{(k)}_{0}, \ldots, P^{(k)}_{s-1}, Q^{(k)}_{s}),\]
  where $P^{(k+1)}_{l_{k+1}}=e^{0}(P^{(k+1)}_{l_{k+1}})$ is in the form of 
\[P^{(k+1)}_{l_{k+1}} = (P^{(k)}_{0}, \ldots, P^{(k)}_{l_{k}}),\]
$s\leq l_{k+1}$, and $Q^{(k)}_{s}$ is a next position of $P^{(k)}_{s}$ in $\mathcal{G}_{k}$ determined by \textbf{Prover}'s query $Q$, \textbf{Delayer}'s option $\rho'$, and \textbf{Prover}'s option $\langle 0,0 \rangle$.
This finishes the proof of item (\ref{backtracking}) in the lemma.

 Furthermore, for $j \in [1,o-2]=[1,k-1]$, applying Induction Hypothesis to $P^{(k)}_{s}$,$Q^{(k)}_{s}$, and the option $\langle 0,0 \rangle$, it follows that $e^{k-j}(P^{(k+1)}_{\star})=e^{k-1-j}(Q^{(k)}_{s})$ is a prolongation of $e^{k-1-j}(P^{(k)}_{s})$ by a new element.
\end{itemize}
%$R_{k+1}(P^{(k+1)}_{i}) \subseteq R_{k+1}(P^{(k+1)}_{i+1})$.

\end{proof}

\subsection{\textbf{Prover}'s winning strategy extracted from a proof}

Similarly as \S\S \ref{Prover's winning strategy for G1}, we connect $\mathcal{G}_{k}(\prec,h)$ to the provability of $TI(\prec)$ in $I\Sigma_{k}(X)$ by Theorem \ref{ProofIsStrategyGk}.
To prove the theorem, we introduce the notion of \textit{ancestors}:
\begin{defn}
Let $\pi=(\tau,\lambda) \colon I\Sigma_{k}(X) \vdash S$.
We define two binary relations on $\{(v,\Phi) \mid v \in \tau,\ \Phi \in \lambda(v)\}$.
$(v,\Phi)$ is \deff{a direct ancestor}\footnote{Note that the terminology is not consistent with the graph-theoretical counterpart.
Here, we are following the terminologies in \cite{TakeutiProofTheory} and \cite{IntroductiontoProofTheory}.} of $(w,\Psi)$ if and only if the following hold:
\begin{enumerate}
 \item $v$ is a child of $w$. 
 \item $\Phi \equiv \Psi$, or, if the derivation rule applied to derive $\lambda(w)$ is one of those listed below, $\Phi$ and $\Psi$ are the following (to make the description concise, we omit diagrams to explain the formulae below. 
 See Definition \ref{sequentcalculus} for reference): 
 \begin{itemize}
  \item $\lor$-Rule: $\Phi \equiv \varphi_{i_{0}}$ and $\Psi \equiv \varphi_{1} \lor \varphi_{2}$.
   
   \item $\exists$-Rule: $\Phi \equiv \varphi(u)$ and $\Psi \equiv \exists x. \varphi(x)$.
    \item $\land$-Rule: $\Psi \equiv \varphi_{1} \land \varphi_{2}$.
     If $v=w*0$, $\Phi \equiv \phi_{1}$.
     If $v=w*1$, $\Phi \equiv \varphi_{2}$.
   
 \item $\forall$-Rule:
    $\Phi \equiv \varphi(a)$ and $\Psi \equiv \forall x. \varphi(x)$.
 
 %\item True Sentence, $p\Sigma_{k}(X)$-Induction, $p\Sigma_{k}(X)$-Cut: the empty relation.
 
 \end{itemize}

\end{enumerate} 
The binary relation ``$(v,\Phi)$ is \deff{an ancestor} of $(w,\Psi)$'' is defined as the transitive closure of \textit{direct ancestor}. 
\end{defn}

Now, assume we have an $I\Sigma_{k+1}(X)$-derivation $\pi=(\tau,\lambda)$ of $TI(\prec)$ in free variable normal form.
 Let $h := height(\tau)$.
 We extract a winning strategy of \textbf{Prover} in $\mathcal{G}_{k+1}(\prec,h)$ from $\pi$.
 
 Assume that the play so far is $\vec{P}^{(k+1)}= (P^{(k+1)}_{0}, \ldots, P^{(k+1)}_{l_{k+1}})\in \mathcal{S}_{k+1}$.
  For $j \in [1,k]$, let 
    \[\vec{P}^{(j)}=(P^{(j)}_{0}, \ldots, P^{(j)}_{l_{j}}):=e^{k+1-j}(\vec{P}^{(k+1)})=e^{k-j}(P^{(k+1)}_{l_{k+1}}).\]
  Note that $\vec{P}^{(j)}=P^{(j+1)}_{l_{j+1}}$ for $j \in [1,k]$. 
  Furthermore, for $j \in [1,k+1]$ and $s \leq l_{j}$, we write $\vec{P}^{(j)}_{\leq s}$ to denote the subsequence $(P^{(j)}_{0}, \ldots, P^{(j)}_{s})$ of $\vec{P}^{(j)}$.
  For $s \leq l_{k+1}$, set $T_{s}:=T_{k+1}(P^{(k+1)}_{s})$ and $\rho_{s}:=R_{k+1}(P^{(k+1)}_{s})$.
  We denote $T_{l_{k+1}}$ and $\rho_{l_{k+1}}$ by $T$ and $\rho$ respectively.
  
 Inductively on $l_{k+1}$, we define, as in the proof of Theorem \ref{ProofIsStrategy1}, the following:
  \begin{itemize}
    \item a homomorphism $V[\vec{P}^{(k+1)}] \colon T \rightarrow \tau$ ,
    \item a finite assignment $E[\vec{P}^{(k+1)}]$ for $fv(\lambda(V[\vec{P}^{(k+1)}](c(T))))$
        \item a counterexample function
     \[W[\vec{P}^{(k+1)}] \colon \lambda(V[\vec{P}^{(k+1)}](c(T)))\cap \bigcup_{i=1}^\infty p\Pi_i(X) \rightarrow \NN,\]
     
    \item \textbf{Prover}'s strategy $(f_{1}(\vec{P}^{(k+1)}), f_{2}(\vec{P}^{(k+1)},Q,\rho'))$
 \end{itemize}
 
 We design them fulfilling the following condition (below, we omit the most conditions on formulae and sequents to which derivation rules are applied, and we just present important ones. For details, see Definition \ref{DefSequentCalculus}):
 \begin{defn}[Condition (\dag\dag) for $V,E,W, f_{1},f_{2}$]\label{doubledagar}
 $(\dag\dag)$ is the following conjunctive condition for $V,E,W$ and $f_{1},f_{2}$:
  \begin{enumerate}
% \item $height(V[\vec{P}^{(k+1)}](w)) = height(w)$ for $w \in T$. 
 
 %\item Let $\emptyset=v_{0}, v_{1},\ldots,v_{r}$ be the branch from the root $\emptyset$ to $v_{r}$ in $T$.
% Then $\emptyset=V[\vec{P}^{(k+1)}](v_{0}), \ldots, V[\vec{P}^{(k+1)}](v_{r})$ is a path from the root in $\tau$.
 
  \item Let $\varphi \in \lambda(V[\vec{P}^{(k+1)}](c(T)))$.
According to the complexity of $\varphi$, the following hold:
 \begin{enumerate}
  \item If $\varphi$ is $X$-free, then 
  \[(E[\vec{P}^{(k+1)}],\rho) \Vdash \overline{\varphi}\quad  (\mbox{or, equivalently,}\ (\NN, E[\vec{P}^{(k+1)}]) \models \overline{\varphi}).\]
  Furthermore, if $\varphi$ is of the form $\forall x. \psi(x)$ where $\psi(x) \in \bigcup_{i=0}^{\infty} p\Sigma_i$, then 
  \[(E[\vec{P}^{(k+1)}],\rho) \Vdash \overline{\psi(W[\vec{P}^{(k+1)}](\varphi))}.\]  
  \item Otherwise, if $\varphi \in \Delta_{0}(X)$, then $(E[\vec{P}^{(k+1)}],\rho) \Vdash \overline{\varphi}$.
  \item Otherwise, if $\varphi \in p\Pi_{1}(X)$, then $(E[\vec{P}^{(k+1)}],\rho) \Vdash \overline{\psi(W[\vec{P}^{(k+1)}](\varphi))}$, where $\varphi \equiv \forall x. \psi(x)$.
\end{enumerate}

  \item\label{cutcase} Let $v=c(T)$, $w \subsetneq v$, and $\lambda(V[\vec{P}^{(k+1)}](w))=\Gamma$ is introduced by $p\Sigma_{k+1}(X)$-Cut:
      \begin{prooftree}
 \AxiomC{$\Gamma, \varphi$}
  \AxiomC{$\Gamma, \overline{\varphi}$}
  \BinaryInfC{$\Gamma$}
 \end{prooftree}
 (where $\varphi \in p\Sigma_{k+1}(X)$.)
 
 Suppose $w*n \subseteq v$.
 If $n=0$, $w*1 \in T$.
 Furthermore, the following hold:
 \begin{enumerate}
  \item\label{cuttotheleft} If $n=0$, then $\lambda(V[\vec{P}^{(k+1)}](w*n))=\Gamma\cup \{\varphi\}$.
  Furthermore, if $(V[\vec{P}^{(k+1)}](v), \forall x. \psi(x))$ is an ancestor of $(V[\vec{P}^{(k+1)}](w*n),\varphi)$ and $\forall x. \psi(x) \in p\Pi_{i}(X)$ ($i \in [2,k]$),
  then there exist $r<l_{k+2-i}$ and $s<l_{k+1}$ such that:
  \begin{enumerate}
   \item $P^{(k+2-i)}_{r}=e^{i-1}(P^{(k+1)}_{s})$.
   \item $\lambda(V[\vec{P}^{(k+1)}_{\leq s}](c(T_{s})))=\Delta$ is derived by $\exists$-rule of the following form:
     \begin{prooftree}
 \AxiomC{$\Delta, \overline{\Psi(u)}$}\RightLabel{($\exists x. \overline{\Psi(x)} \in \Delta$)}
    \UnaryInfC{$\Delta$}
 \end{prooftree}
 where $\NN \models E[\vec{P}^{(k+1)}_{\leq s}](\Psi(u))\leftrightarrow E[\vec{P}^{(k+1)}](\psi(W[\vec{P}^{(k+1)}](\forall x.\psi(x))))$.
  %\item $W(\forall x.\psi) =  E^{(k+2-i)}_{s}(u)$.

  \item $V[\vec{P}^{(k+1)}](w)*1 \subseteq V[\vec{P}^{(k+1)}_{\leq s}](c(T_{s}))$.
 %\item $E[\vec{P}^{(k+1)}_{\leq s}](a)=0$.
   \end{enumerate}
   
   The following figure comprehends the situation (``@ $\sigma$'' indicates that the corresponding sequent is labelled at a vertex $\sigma$ in $\tau$. 
   Besides, the boxes indicate the ancestor relation.):
 %    \noindent (1)
$$
\infer[\mbox{($p\Sigma_{k+1}$-Cut)}]{\Gamma \quad (@ V[\vec{P}^{(k+1)}](w))}{
	\infer*{\Gamma, \boxed{\varphi} \quad (@ V[\vec{P}^{(k+1)}](w)*0)}{
		\infer{\cdots, \boxed{\forall x. \psi(x)} \quad (@ V[\vec{P}^{(k+1)}](v))}{
		\vdots
		}
	}
	&
	\infer*{\Gamma, \overline{\varphi} \quad (@ V[\vec{P}^{(k+1)}](w)*1)}{
	    \infer[\mbox{($\exists$-rule)}]{\Delta \quad (@ V[\vec{P}_{\leq s}^{(k+1)}](c(T_{s})))}{
	     \infer{\Delta, \overline{\Psi(u)} \quad (@ V[\vec{P}_{\leq s}^{(k+1)}](c(T_{s}))*0)}{
	     \vdots}
	    }
	}
}
$$
   (where $u$ gives a candidate of counterexample of $\forall x. \psi(x)$.)
   
  \item\label{cuttotheright} If $n=1$, then $\lambda(V[\vec{P}^{(k+1)}](w*n))=\Gamma\cup \{\overline{\varphi}\}$.
   Furthermore, if $(V[\vec{P}^{(k+1)}](v), \forall x. \psi(x))$ is an ancestor of $(V[\vec{P}^{(k+1)}](w*n), \overline{\varphi})$ and $\forall x. \psi(x) \in p\Pi_{i}(X)$ ($i \in [2,k+1]$),
  then there exist $r<l_{k+2-i}$ and $s<l_{k+1}$ such that:
  \begin{enumerate}
  \item $P^{(k+2-i)}_{r}=e^{i-1}(P^{(k+1)}_{s})$.
   \item $\lambda(V[\vec{P}^{(k+1)}_{\leq s}](c(T_{s})))=\Delta$ is derived by $\exists$-rule of the following form:
     \begin{prooftree}
 \AxiomC{$\Delta, \overline{\Psi(u)}$}\RightLabel{($\exists x. \overline{\Psi(x)} \in \Delta$)}
    \UnaryInfC{$\Delta$}
 \end{prooftree}
 where $\NN \models E[\vec{P}^{(k+1)}_{\leq s}](\Psi(u))\leftrightarrow E[\vec{P}^{(k+1)}](\psi(W[\vec{P}^{(k+1)}](\forall x.\psi(x))))$.
 \item $V[\vec{P}^{(k+1)}](w)*0 \subseteq V[\vec{P}^{(k+1)}_{\leq s}](c(T_{s}))$.
 %\item $E[\vec{P}^{(k+1)}_{\leq s}](a)=E[\vec{P}^{(k+1)}](t)$.
 %\item $\forall y \exists x .\psi' \in \lambda(c(T^{(k+1-i)}_{l_{k+1-i}}))$ and $W^{(k+1-i)}_{s}(\forall y \exists x .\psi') = E^{(k+1-i)}_{s}[\vec{P}](u)$.
   \end{enumerate}
    The following figure comprehends the situation:
 %    \noindent (1)
$$
\infer[\mbox{($p\Sigma_{k+1}$-Cut)}]{\Gamma \quad (@ V[\vec{P}^{(k+1)}](w))}{
	\infer*{\Gamma, \varphi \quad (@ V[\vec{P}^{(k+1)}](w)*0)}{
		\infer[\mbox{($\exists$-rule)}]{\Delta \quad (@ V[\vec{P}_{\leq s}^{(k+1)}](c(T_{s})))}{
	     \infer{\Delta, \overline{\Psi(u)} \quad (@ V[\vec{P}_{\leq s}^{(k+1)}](c(T_{s}))*0)}{
	     \vdots}
	    }		
	}
	&
	\infer*{\Gamma, \boxed{\overline{\varphi}} \quad (@ V[\vec{P}^{(k+1)}](w)*1)}{
	    \infer{\cdots, \boxed{\forall x. \psi(x)} \quad (@ V[\vec{P}^{(k+1)}](v))}{
		\vdots
	    }
	   }
}
$$
  \end{enumerate}
(where $u$ gives a candidate of counterexample of $\forall x. \psi(x)$.)
 
 \item\label{doubledagarInduction} Let $v=c(T)$, $w \subsetneq v$, and $\lambda(V[\vec{P}^{(k+1)}](w))=\Gamma$ is introduced by $p\Sigma_{k+1}(X)$-Induction:
      \begin{prooftree}
 \AxiomC{$\Gamma, \varphi(0)$}
 \AxiomC{$\Gamma, \overline{\varphi(a)},\varphi(a+1)$}
  \AxiomC{$\Gamma, \overline{\varphi(t)}$}
  \RightLabel{\quad (where $\varphi \in p\Sigma_{k+1}(X)$, $a$ is an eigenvariable)}
  \TrinaryInfC{$\Gamma$}
 \end{prooftree}

 Suppose $w*n \subseteq v$.
 Then $n \in [0,E[\vec{P}^{(k+1)}](t)+1]$, $w*l \in T$ for every $l \in [n,E[\vec{P}^{(k+1)}](t)]$.
 Furthermore, if $E[\vec{P}^{(k+1)}](t) =0$, the two subitems (\ref{cuttotheleft})(\ref{cuttotheright}) of the previous item \ref{cutcase} 
 hold, replacing $\varphi$ with $\varphi(0)$, $\overline{\varphi}$ with $\overline{\varphi(t)}$, and ``$V[\vec{P}^{(k+1)}](w)*1$'' with ``$V[\vec{P}^{(k+1)}](w)*2$.''
 
 When $E[\vec{P}^{(k+1)}](t) \geq 1$, the following hold:
 \begin{enumerate}
  \item If $n=0$, then $\lambda(V[\vec{P}^{(k+1)}](w*n))=\Gamma\cup \{\varphi(0)\}$.
  Furthermore, if $(V[\vec{P}^{(k+1)}](v), \forall x. \psi(x))$ is an ancestor of $(V[\vec{P}^{(k+1)}](w*0),\varphi(0))$ and $\forall x. \psi(x) \in p\Pi_{i}(X)$ ($i \in [2,k]$),
  then there exist $r<l_{k+2-i}$ and $s<l_{k+1}$ such that:
  \begin{enumerate}
   \item $P^{(k+2-i)}_{r}=e^{i-1}(P^{(k+1)}_{s})$.
   \item $\lambda(V[\vec{P}^{(k+1)}_{\leq s}](c(T_{s})))=\Delta$ is derived by $\exists$-rule of the following form:
     \begin{prooftree}
 \AxiomC{$\Delta, \overline{\Psi(u)}$}\RightLabel{($\exists x. \overline{\Psi(x)} \in \Delta$)}
    \UnaryInfC{$\Delta$}
 \end{prooftree}
 where $\NN \models E[\vec{P}^{(k+1)}_{\leq s}](\Psi(u))\leftrightarrow E[\vec{P}^{(k+1)}](\psi(W[\vec{P}^{(k+1)}](\forall x.\psi(x))))$.
  %\item $W(\forall x.\psi) =  E^{(k+2-i)}_{s}(u)$.
  \item $V[\vec{P}^{(k+1)}](w)*1 \subseteq V[\vec{P}^{(k+1)}_{\leq s}](c(T_{s}))$.
 \item $E[\vec{P}^{(k+1)}_{\leq s}](a)=0$.
   \end{enumerate}
   The following figure comprehends the situation:
   $$
\infer{\Gamma \quad (@ V[\vec{P}^{(k+1)}](w))}{
	\infer*{\Gamma, \boxed{\varphi(0)} \quad (@ V[\vec{P}^{(k+1)}](w)*0)}{
		\infer{\cdots, \boxed{\forall x. \psi(x)} \quad (@ V[\vec{P}^{(k+1)}](v))}{
		\vdots
		}
	}
	&
	\infer*{\Gamma, \overline{\varphi(a)}, \varphi(a+1) \quad (@ V[\vec{P}^{(k+1)}](w)*1)}{
	    \infer[\mbox{($\exists$-rule)}]{\Delta \quad (@ V[\vec{P}_{\leq s}^{(k+1)}](c(T_{s})))}{
	     \infer{\Delta, \overline{\Psi(u)} \quad (@ V[\vec{P}_{\leq s}^{(k+1)}](c(T_{s}))*0)}{
	     \vdots}
	    }
	}
	&
	\infer*{\Gamma, \overline{\varphi(t)}}{
	}
}
$$
   (where $u$ gives a candidate of counterexample of $E[\vec{P}^{(k+1)}](\forall x. \psi(x))$ under $E[\vec{P}^{(k+1)}_{\leq s}]$.)
   
  \item If $n=E[\vec{P}^{(k+1)}](t)+1$, then 
  \[\lambda(V[\vec{P}^{(k+1)}](w*n))=\lambda(V[\vec{P}^{(k+1)}](w)*2)=\Gamma\cup \{\overline{\varphi(t)}\}.\]
   Furthermore, if $(V[\vec{P}^{(k+1)}](v), \forall x. \psi(x))$ is an ancestor of $(V[\vec{P}^{(k+1)}](w*n), \overline{\varphi(t)})$ and $\forall x. \psi(x) \in p\Pi_{i}(X)$ ($i \in [2,k+1]$),
  then there exist $r<l_{k+2-i}$ and $s<l_{k+1}$ such that:
  \begin{enumerate}
  \item $P^{(k+2-i)}_{r}=e^{i-1}(P^{(k+1)}_{s})$.
   \item $\lambda(V[\vec{P}^{(k+1)}_{\leq s}](c(T_{s})))=\Delta$ is derived by $\exists$-rule of the following form:
     \begin{prooftree}
 \AxiomC{$\Delta, \overline{\Psi(u)}$}\RightLabel{($\exists x. \overline{\Psi(x)} \in \Delta$)}
    \UnaryInfC{$\Delta$}
 \end{prooftree}
 where $\NN \models E[\vec{P}^{(k+1)}_{\leq s}](\Psi(u))\leftrightarrow E[\vec{P}^{(k+1)}](\psi(W[\vec{P}^{(k+1)}](\forall x.\psi(x))))$.
 \item $V[\vec{P}^{(k+1)}](w)*1 \subseteq V[\vec{P}^{(k+1)}_{\leq s}](c(T_{s}))$.
 \item $E[\vec{P}^{(k+1)}_{\leq s}](a)=E[\vec{P}^{(k+1)}](t)$.
 %\item $\forall y \exists x .\psi' \in \lambda(c(T^{(k+1-i)}_{l_{k+1-i}}))$ and $W^{(k+1-i)}_{s}(\forall y \exists x .\psi') = E^{(k+1-i)}_{s}[\vec{P}](u)$.
   \end{enumerate}
  $$
\infer{\Gamma \quad (@ V[\vec{P}^{(k+1)}](w))}{
   \infer*{\Gamma, \varphi(0)}{
   }
   &
	\infer*{\Gamma, \overline{\varphi(a)},\varphi(a+1) \quad (@ V[\vec{P}^{(k+1)}](w)*1)}{
		\infer[\mbox{($\exists$-rule)}]{\Delta \quad (@ V[\vec{P}_{\leq s}^{(k+1)}](c(T_{s})))}{
	     \infer{\Delta, \overline{\Psi(u)} \quad (@ V[\vec{P}_{\leq s}^{(k+1)}](c(T_{s}))*0)}{
	     \vdots}
	    }		
	}
	&
	\infer*{\Gamma, \boxed{\overline{\varphi(t)}} \quad (@ V[\vec{P}^{(k+1)}](w)*2)}{
	    \infer{\cdots, \boxed{\forall x. \psi(x)} \quad (@ V[\vec{P}^{(k+1)}](v))}{
		\vdots
	    }
	   }
}
$$
   (where $u$ gives a candidate of counterexample of $E[\vec{P}^{(k+1)}](\forall x. \psi(x))$ under $E[\vec{P}^{(k+1)}_{\leq s}]$.)

  \item Otherwise, 
  \[\lambda(V[\vec{P}^{(k+1)}](w*n))=\lambda(V[\vec{P}^{(k+1)}](w)*1)=\Gamma \cup\{ \overline{\varphi(a)},\varphi(a+1)\},\]
   and $E[\vec{P}^{(k+1)}](a)=n-1$.
  
   Consider the case when $(V[\vec{P}^{(k+1)}](v), \forall x. \psi(x) )$ is an ancestor of \\
   $(V[\vec{P}^{(k+1)}](w*n), \overline{\varphi(a)})$ and $\forall x. \psi(x) \in p\Pi_{i}(X)$ ($i \in [2,k+1]$):
     $$
\infer{\Gamma \quad (@ V[\vec{P}^{(k+1)}](w))}{
   \infer*{\Gamma, \varphi(0)}{
   }
   &
	\infer*{\Gamma, \boxed{\overline{\varphi(a)}},\varphi(a+1) \quad (@ V[\vec{P}^{(k+1)}](w)*1)}{
	    \infer{\cdots, \boxed{\forall x. \psi(x)} \quad (@ V[\vec{P}^{(k+1)}](v))}{
		\vdots
		}
	}
	&
	\infer*{\Gamma, \overline{\varphi(t)}}{	    
	}   
}
$$

  then there exist $r<l_{k+2-i}$ and $s<l_{k+1}$ such that:
  \begin{enumerate}
   \item $P^{(k+2-i)}_{r}=e^{i-1}(P^{(k+1)}_{s})$.
   \item $\lambda(V[\vec{P}^{(k+1)}_{\leq s}](c(T_{s})))=\Delta$ is derived by $\exists$-rule of the following form:
     \begin{prooftree}
 \AxiomC{$\Delta, {\Psi(u)}$}\RightLabel{($\exists x.\overline{\Psi(x)} \in \Delta$)}
    \UnaryInfC{$\Delta$}
 \end{prooftree}
 where $\NN \models E[\vec{P}^{(k+1)}_{\leq s}](\Psi(u))\leftrightarrow E[\vec{P}^{(k+1)}](\psi(W[\vec{P}^{(k+1)}](\forall x.\psi(x))))$.

 \item If $n-1=0$, $V[\vec{P}^{(k+1)}](w)*0 \subseteq V[\vec{P}^{(k+1)}_{\leq s}](c(T_{s}))$.
 The following figure comprehends the situation:
      $$
\infer{\Gamma \quad (@ V[\vec{P}^{(k+1)}](w))}{
   \infer*{\Gamma, \varphi(0)  \quad (@ V[\vec{P}^{(k+1)}](w)*0)}{
   \infer{\Delta \quad (@ V[\vec{P}_{\leq s}^{(k+1)}](c(T_{s})))}{
	     \infer{\Delta, \overline{\Psi(u)} \quad (@ V[\vec{P}_{\leq s}^{(k+1)}](c(T_{s}))*0)}{
	     \vdots}
	    }		
   }
   &
	\infer*{\Gamma, \boxed{\overline{\varphi(a)}},\varphi(a+1) \quad (@ V[\vec{P}^{(k+1)}](w)*1)}{
	    \infer{\cdots, \boxed{\forall x. \psi(x)} \quad (@ V[\vec{P}^{(k+1)}](v))}{
		\vdots
		}
	}
	&
	\infer*{\Gamma, \overline{\varphi(t)}}{	    
	}   
}
$$
   (where $u$ gives a candidate of counterexample of $E[\vec{P}^{(k+1)}](\forall x. \psi(x))$ under $E[\vec{P}^{(k+1)}_{\leq s}]$.)

\item Otherwise, $V[\vec{P}^{(k+1)}](w)*1 \subseteq V[\vec{P}^{(k+1)}_{\leq s}](c(T_{s}))$.
Moreover, $E[\vec{P}^{(k+1)}_{\leq s}](a)=n-2$:
     $$
\infer{\Gamma \quad (@ V[\vec{P}^{(k+1)}](w))}{
   \infer*{\Gamma, \varphi(0)}{
   }
   &
	\infer*{\Gamma, \overline{\varphi(a)},\varphi(a+1) \quad (@ V[\vec{P}^{(k+1)}](w)*1)}{
	    \infer{\Delta \quad (@ V[\vec{P}_{\leq s}^{(k+1)}](c(T_{s})))}{
	     \infer{\Delta, \overline{\Psi(u)} \quad (@ V[\vec{P}_{\leq s}^{(k+1)}](c(T_{s}))*0)}{
	     \vdots}
	    }	
	  }
	&
	\infer*{\Gamma, \overline{\varphi(t)}}{	    
	}   
}
$$
   (where $u$ gives a candidate of counterexample of $E[\vec{P}^{(k+1)}](\forall x. \psi(x))$ under $E[\vec{P}^{(k+1)}_{\leq s}]$.
   Note that $E[\vec{P}^{(k+1)}_{\leq s}](a+1)=n-1=E[\vec{P}^{(k+1)}](a)$.)
   \end{enumerate}
   
  Similarly, consider the case when $(V[\vec{P}^{(k+1)}](v), \forall x. \psi(x) )$ is an ancestor of \\
  $(V[\vec{P}^{(k+1)}](w*n),\varphi(a+1))$ and is in $p\Pi_{i}$ ($i \in [2,k]$).
      $$
\infer{\Gamma \quad (@ V[\vec{P}^{(k+1)}](w))}{
   \infer*{\Gamma, \varphi(0)}{
   }
   &
	\infer*{\Gamma, \overline{\varphi(a)}, \boxed{\varphi(a+1)} \quad (@ V[\vec{P}^{(k+1)}](w)*1)}{
	    \infer{\cdots, \boxed{\forall x. \psi(x)} \quad (@ V[\vec{P}^{(k+1)}](v))}{
		\vdots
		}
	}
	&
	\infer*{\Gamma, \overline{\varphi(t)}}{	    
	}   
}
$$

  Then there exist $r<l_{k+2-i}$ and $s<l_{k+1}$ such that:
  \begin{enumerate}
   \item $P^{(k+2-i)}_{r}=e^{i-1}(P^{(k+1)}_{s})$.
   \item $\lambda(V[\vec{P}^{(k+1)}_{\leq s}](c(T_{s})))=\Delta$ is derived by $\exists$-rule of the following form:
     \begin{prooftree}
 \AxiomC{$\Delta, \overline{\Psi(u)}$}\RightLabel{($\exists x. \overline{\Psi(x)} \in \Delta$)}
    \UnaryInfC{$\Delta$}
 \end{prooftree}
 where $\NN \models E[\vec{P}^{(k+1)}_{\leq s}](\Psi(u))\leftrightarrow E[\vec{P}^{(k+1)}](\psi(W[\vec{P}^{(k+1)}](\forall x.\psi(x))))$.

 \item If $n-1=E[\vec{P}^{(k+1)}](t)-1$, then $V[\vec{P}^{(k+1)}](w)*2 \subseteq V[\vec{P}^{(k+1)}_{\leq s}](c(T_{s}))$.
 The following figure comprehends the situation:
  $$
\infer{\Gamma \quad (@ V[\vec{P}^{(k+1)}](w))}{
   \infer*{\Gamma, \varphi(0)}{
   }
   &
	\infer*{\Gamma, \overline{\varphi(a)}, \boxed{\varphi(a+1)} \quad (@ V[\vec{P}^{(k+1)}](w)*1)}{
	\infer{\cdots, \boxed{\forall x. \psi(x)} \quad (@ V[\vec{P}^{(k+1)}](v))}{
		\vdots
	    }
	}
	&
	\infer*{\Gamma, \overline{\varphi(t)} \quad (@ V[\vec{P}^{(k+1)}](w)*2)}{
	    \infer{\Delta \quad (@ V[\vec{P}_{\leq s}^{(k+1)}](c(T_{s})))}{
	     \infer{\Delta, \overline{\Psi(u)} \quad (@ V[\vec{P}_{\leq s}^{(k+1)}](c(T_{s}))*0)}{
	     \vdots}
	    }		
	   }
}
$$
   (where $u$ gives a candidate of counterexample of $E[\vec{P}^{(k+1)}](\forall x. \psi(x))$ under $E[\vec{P}^{(k+1)}_{\leq s}]$.)

\item Otherwise, $V[\vec{P}^{(k+1)}](w)*1 \subseteq V[\vec{P}^{(k+1)}_{\leq s}](c(T^{(k+2-i)}_{s}))$.
Moreover, $E[\vec{P}^{(k+1)}_{\leq s}](a)=n$:
    $$
\infer{\Gamma \quad (@ V[\vec{P}^{(k+1)}](w))}{
   \infer*{\Gamma, \varphi(0) }{
   }
   &
	\infer*{\Gamma, \overline{\varphi(a)},\varphi(a+1) \quad (@ V[\vec{P}^{(k+1)}](w)*1)}{
	    \infer{\Delta \quad (@ V[\vec{P}_{\leq s}^{(k+1)}](c(T_{s})))}{
	     \infer{\Delta, \overline{\Psi(u)} \quad (@ V[\vec{P}_{\leq s}^{(k+1)}](c(T_{s}))*0)}{
	     \vdots}
	    }	
	  }
	&
	\infer*{\Gamma, \overline{\varphi(t)}}{	    
	}   
}
$$
   (where $u$ gives a candidate of counterexample of $E[\vec{P}^{(k+1)}](\forall x. \psi(x))$ under $E[\vec{P}^{(k+1)}_{\leq s}]$.
    Note that $E[\vec{P}^{(k+1)}](a+1)=n=E[\vec{P}^{(k+1)}_{\leq s}](a)$.)
   \end{enumerate}

 \end{enumerate}
 
 \end{enumerate}

 \end{defn}

Now, we proceed to a concrete statement and a construction.

\begin{thm}\label{ProofIsStrategyGk}
Let $k \geq 1$.
    Suppose $I\Sigma_{k+1}(X) \vdash TI(\prec)$.
Then there exists $h \in \NN$ such that \textbf{Prover} has a winning strategy for $\mathcal{G}_{k+1}(\prec,h)$.
\end{thm}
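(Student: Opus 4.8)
The plan is to mirror the construction in the proof of Theorem~\ref{ProofIsStrategy1}, but now maintaining the richer invariant $(\dag\dag)$ of Definition~\ref{doubledagar} and exploiting the backtracking options $o\in[2,k+1]$ that distinguish $\mathcal{G}_{k+1}$ from $\mathcal{G}_{1}$. Starting from an $I\Sigma_{k+1}(X)$-derivation $\pi=(\tau,\lambda)$ of $TI(\prec)$ in free-variable normal form, I set $h:=height(\tau)$ and construct, by induction on the length $l_{k+1}$ of the play $\vec{P}^{(k+1)}$, the homomorphism $V[\vec{P}^{(k+1)}]$, the assignment $E[\vec{P}^{(k+1)}]$, the counterexample function $W[\vec{P}^{(k+1)}]$, and \textbf{Prover}'s strategy $(f_{1},f_{2})$, all obeying $(\dag\dag)$. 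As before, the subformula property of $\pi$ ensures that every formula occurring in it is $s\Sigma_{k+1}(X)$, $s\Pi_{k+1}(X)$, a subformula of $TI(\prec)$, or $X$-free; and $\lambda(V[\vec{P}^{(k+1)}](c(T)))$ is never an Initial Sequent, since $(\dag\dag)$ would then falsify two complementary literals simultaneously.

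The inductive step splits on the rule deriving $\lambda(V[\vec{P}^{(k+1)}](c(T)))$. The rules True Sentence, $\lor$, $\land$, and $\forall$ are treated essentially verbatim as in Theorem~\ref{ProofIsStrategy1}: \textbf{Prover} queries nothing (or, for $\forall$, instantiates the eigenvariable by the value $W$ supplies), plays $\langle 0,0\rangle$, and extends $V,E,W$ to the single new child. The rules $p\Sigma_{k+1}(X)$-Cut and $p\Sigma_{k+1}(X)$-Induction are handled by $\langle 0,1\rangle$ and $\langle 0,E[\vec{P}^{(k+1)}](t)+1\rangle$ respectively, reproducing the comb-combinatorics underlying clauses~\ref{cutcase} and~\ref{doubledagarInduction}; the genuinely new content is only that those clauses additionally register, for each $p\Pi_i(X)$-ancestor with $i\in[2,k+1]$, the earlier play-index $s$ and the position $P^{(k+2-i)}_{r}=e^{i-1}(P^{(k+1)}_{s})$ carrying the matching $\exists$-rule, and this registration is inherited unchanged along these non-querying moves.

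The heart of the argument is the $\exists$-rule. Its $X$-free, $\Delta_0(X)$, and $TI(\prec)$-subformula subcases are dispatched exactly as in the base case; a genuine $p\Sigma_{j}(X)$-existential with $j\geq 2$ is dispatched by selecting the witness term prescribed by $\pi$ and advancing, with the ancestor-registration set up, since its matrix is not yet $\Delta_0$. The only occasion that triggers backtracking is an innermost existential whose matrix is $\Delta_0(X)$: \textbf{Prover} queries the finitely many bits that decide it under $E[\vec{P}^{(k+1)}]$, advances with $\langle 0,0\rangle$ if \textbf{Delayer}'s answer falsifies it, and otherwise obtains a fresh witness refuting the universal ancestor $\forall x.\psi(x)$ that $(\dag\dag)$ attaches to the current frontier. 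If that ancestor is $p\Pi_1(X)$ the refutation is absorbed by the comb-backtrack (option $o=1$) precisely as in Theorem~\ref{ProofIsStrategy1}; if it is $p\Pi_i(X)$ with $i\geq 2$, the registration supplies the level $i$ and index $r$, and \textbf{Prover} plays the option truncating the depth-$(k+2-i)$ sub-record at $r$, that is, $o=k+3-i$ in the sense of Lemma~\ref{structureofGk}(\ref{backtracking}). By Lemma~\ref{structureofGk}(\ref{backtrackingasbasicoptions})--(\ref{insideofbacktracking}) the resulting position has a completely determined shape, and I re-decorate it by restricting $V,E,W$ to the revived prefix $\vec{P}^{(k+1)}_{\leq s}$ and grafting the new witness so that $(\dag\dag)$ is restored at the new frontier. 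That \textbf{Prover} never loses follows because the needed siblings $w*1$ (or $w*2$) lie in $T$ by the Cut/Induction clauses of $(\dag\dag)$, and termination is guaranteed by Lemma~\ref{monotonicityforGk}.

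The main obstacle is exactly this bookkeeping across a backtracking move: after playing $o=k+3-i$ one must re-verify that the registration clauses of $(\dag\dag)$ hold simultaneously at every level $j\in[1,k]$, tracking via Lemma~\ref{structureofGk} how each $e^{k-j}(P^{(k+1)}_{\star})$ is prolonged for $j\in[1,k+1-i]\cup[k+3-i,k]$ and truncated-and-advanced at $j=k+2-i$. In particular the matching identity $\NN\models E[\vec{P}^{(k+1)}_{\leq s}](\Psi(u))\leftrightarrow E[\vec{P}^{(k+1)}](\psi(W[\vec{P}^{(k+1)}](\forall x.\psi(x))))$ between the revived $\exists$-witness and the current counterexample must be re-established, and this is where the nested form of $(\dag\dag)$ together with the correctness of the recorded index $s$ does the real work. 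Once $(\dag\dag)$ is shown to be an invariant, the theorem follows as in Theorem~\ref{ProofIsStrategy1}: \textbf{Prover} can answer every legal \textbf{Delayer} move, so the constructed $(f_{1},f_{2})$ is a winning strategy for $\mathcal{G}_{k+1}(\prec,h)$.
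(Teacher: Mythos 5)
Your scaffolding (extract $\pi=(\tau,\lambda)$, set $h:=height(\tau)$, maintain $(\dag\dag)$ by induction on the play, and split on the rule deriving $\lambda(V[\vec{P}^{(k+1)}](c(T)))$) is the paper's, but your treatment of the $\exists$-rule --- which you correctly call the heart of the argument --- is not, and it breaks down. You claim that a $p\Sigma_j(X)$-existential with $j\ge 2$ can be handled by ``selecting the witness term prescribed by $\pi$ and advancing,'' with backtracking reserved exclusively for innermost existentials. But advancing past the $\exists$-rule puts the matrix $\varphi(u)$ on the frontier, and when $j=2$ this matrix is a $p\Pi_1(X)$-formula: condition $(\dag\dag)$ then demands a \emph{verified} counterexample, i.e.\ a value $W(\varphi(u))$ whose instance is forced by $(E,\rho)$, which you do not possess and cannot obtain by querying --- an adversarial \textbf{Delayer} never has to answer in a way that forces an instance of the negated matrix true. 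The failure is immediate, not cosmetic: the next inference of $\pi$ on this branch may be the $\forall$-rule applied to $\varphi(u)$, and your own prescription for $\forall$-rules (``instantiate the eigenvariable by the value $W$ supplies'') has no value to supply. Concretely, for $k+1=3$ the induction-step branch decomposes $\overline{\psi(a)}$ as $\overline{\psi(a)} \mapsto \exists x_2\forall x_3.\overline{\theta(a_1,x_2,x_3)} \mapsto \forall x_3.\overline{\theta(a_1,u_2,x_3)} \mapsto \overline{\theta(a_1,u_2,a_3)}$; there is no $p\Sigma_1(X)$-existential anywhere below the intermediate one, so your backtracking rule never fires on this branch, yet an eigenvariable value for $a_3$ must come from somewhere. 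The same objection hits $j=k+1$: advancing strands a $p\Pi_k(X)$-formula whose witness value is fresh, so it has neither a counterexample nor a registration partner.

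The paper's strategy does the opposite of yours at every non-innermost existential, and that is precisely what makes $(\dag\dag)$ an invariant. For a $p\Sigma_{k+1}(X)$-existential (necessarily the cut/induction formula $\psi(0)$ or $\psi(a+1)$), \textbf{Prover} queries nothing and plays the comb move $\langle 1,w\rangle$, shifting to the next induction branch and installing $E(u)$ as the counterexample candidate for $\overline{\psi(a)}$ resp.\ $\overline{\psi(t)}$ there. For an intermediate $p\Sigma_i(X)$-existential, $i\in[2,k]$, \textbf{Prover} queries nothing and \emph{immediately} backtracks with $\langle k+2-i,r\rangle$ (case (\ref{intermediatebacktrack}) of the paper's proof), carrying $E(u)$ to the registered partner position where the dual formula $\forall x.\overline{\varphi(x,u')}$ was instantiated; that is the branch on which this witness is needed. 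Only for $p\Sigma_1(X)$-existentials does \textbf{Prover} query at all, advancing with $\langle 0,0\rangle$ on falsification and backtracking with $\langle k+1,r\rangle$ on verification. (Your level arithmetic $o=k+3-i$, with $i$ the ancestor's $\Pi$-complexity, is consistent with the paper's $o=k+2-i'$, with $i'$ the existential's $\Sigma$-complexity; but your trigger condition is wrong, and your subcase ``$p\Pi_1(X)$ ancestor, use $o=1$'' is vacuous, since an innermost existential introduced by a $\forall$-rule always has a $p\Pi_2(X)$ ancestor --- in $\mathcal{G}_{k+1}$ the option $o=1$ belongs to the outermost existentials, not the innermost.) In short, witnesses must be shuttled across the cut/induction at \emph{every} quantifier level as soon as they appear, not collected only at the bottom; as stated, your \textbf{Prover} gets stuck at the first $\forall$-rule below a non-innermost existential, so the proposal has a genuine gap.
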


\begin{proof}
     By assumption, we have an $I\Sigma_{k+1}(X)$-derivation $\pi=(\tau,\lambda)$ of $TI(\prec)$ in free variable normal form.
 Let $h := height(\tau)$.
 We consider \textbf{Prover}'s winning strategy for $\mathcal{G}_{k+1}(\prec,h)$.
 We suppress the inputs $(\prec,h)$ in the following for readability.
 
 By relabeling the vertices of $\tau$, we may assume that $\tau$ has a canonical labeling so that:
 \[\sigma*k \in \tau \ \& \ l \leq k \Longrightarrow \sigma*l \in \tau.\]
 
 By subformula-property of cut-free sequent calculus, each $\varphi$ appearing in $\pi$ is either $s\Sigma_{k+1}(X)$, $s\Pi_{k+1}(X)$, or $X$-free.
 
  We extract a winning strategy of \textbf{Prover} from $\pi$.
  Assume that the play so far is $\vec{P}^{(k+1)}= (P^{(k+1)}_{0}, \ldots, P^{(k+1)}_{l_{k+1}})\in \mathcal{S}_{k+1}$.
  Recall the notations $\vec{P}^{(j)}=(P^{(j)}_{0}, \ldots, P^{(j)}_{l_{j}})$, $\vec{P}^{(j)}_{\leq s}$, $T_{s}$, $\rho_{s}$, $T$ and $\rho$, introduced right before Definition \ref{doubledagar}.
 Inductively on $l_{k+1}$, we define $V,E,W, f_{1},f_{2}$ enjoying the condition (\dag\dag). (Recall Lemma \ref{structureofGk} when we define $V,E,W$.)
 
 Consider the base case $l_{k+1}=0$.
 $P^{(k+1)}_{0}$ is the initial position.
 Note that $l_{j}=0$ for $j \in [1,k+1]$ in this case, and $T=T_{0}=T_{k+1}(P^{(k+1)}_{0})$ is the tree of height $0$.
 Set:
  \begin{itemize}
  \item $V[\vec{P}^{(k+1)}] \colon T_{0} \mapsto \tau; \emptyset \mapsto \emptyset$,
  \item $E[\vec{P}^{(k+1)}] := \emptyset$ since there is no free variable in $\lambda(V[\vec{P}^{(k+1)}](\emptyset))$,
  \item $W[\vec{P}^{(k+1)}]; (\forall y. \overline{y \in X}) \mapsto m_0$.
  \end{itemize}
  $f_{1}(\vec{P}^{(k+1)})$ and $f_{2}(\vec{P}^{(k+1)},Q,\rho')$ are described comprehensively in the following inductive way.
  
 Suppose we are at a play $\vec{P}^{(k+1)}=(P^{(k+1)}_{0}, \ldots, P^{(k+1)}_{l_{k+1}})$, and $V[\vec{P}^{(k+1)}]$, $E[\vec{P}^{(k+1)}]$, $W[\vec{P}^{(k+1)}]$ satisfying $(\dag\dag)$ are already defined.
 
 We describe $f_{1}(\vec{P}^{(k+1)})$, $f_{2}(\vec{P}^{(k+1)},Q,\rho')$, and $V[\vec{P}^{(k+1)}P^{(k+1)}_{\star}]$, $E[\vec{P}^{(k+1)}P^{(k+1)}_{\star}]$, $W[\vec{P}^{(k+1)}P^{(k+1)}_{\star}]$ for the next position $P^{(k+1)}_{\star}$.
 Let $v:=c(T)$.
  We split cases by the rule deriving $\lambda(V[\vec{P}^{(k+1)}](v))$ in $\pi$.
  $\lambda(V[\vec{P}^{(k+1)}](v))$ is not an Initial Sequent since, if it was the case, then the literals $L$ and $\overline{L}$ in $\lambda(V[\vec{P}^{(k+1)}](v))$ should be falsified by $(E[\vec{P}^{(k+1)}],\rho)$, which is absurd.
  Note that, in each case below, the conditions above remain satisfied:
 \begin{enumerate}
  
  %\item\label{AxiomCase} The case when $\lambda(V[\vec{P}^{(k+1)}](v))=\Gamma$ is derived by True Sentence:
 %  \begin{prooftree}
% \AxiomC{$\Gamma, \overline{\varphi}$} \RightLabel{\quad (where $\varphi \in Th(\NN)$)}
%  \UnaryInfC{$\Gamma$}
% \end{prooftree}
%  Set $f_{1}(\vec{P}^{(k+1)}):=\emptyset$. 
% Let $\rho'$ be \textbf{Delayer}'s answer.
% Set
% \[f_{2}(\vec{P}^{(k+1)},\emptyset,\rho'):= \left\langle 0, 0 \right\rangle.\]
%  For the next position $P^{(k+1)}_{\star}$:
%  \begin{itemize}
 %  \item let $V[\vec{P}^{(k+1)}P^{(k+1)}_{\star}]$ be the extension of $V[\vec{P}^{(k+1)}]$ which maps the new child of $v$ to the unique child of $V[\vec{P}^{(k+1)}](v)$.
 %\item define $E[\vec{P}^{(k+1)}P^{(k+1)}_{\star}] := E[\vec{P}^{(k+1)}]$.
 %\item set
 %\begin{align*}
 %W[\vec{P}^{(k+1)}P^{(k+1)}_{\star}] := 
 %\begin{cases}
 %    W[\vec{P}^{(k+1)}] \quad &\mbox{(if $\overline{\varphi} \not \in \bigcup_{i=1}^{\infty}p\Pi_{i}(X)$)}\\
  %   W[\vec{P}^{(k+1)}] \cup \{\overline{\varphi} \mapsto w\} \quad &\mbox{(otherwise)}
 %\end{cases}
%\end{align*}
%Here, $w$ above is the $\leq$-least witness of $\varphi$.
%\end{itemize}
    \item The cases when $\lambda(V[\vec{P}^{(k+1)}](v))=\Gamma$ is derived by True Sentence, $\lor$-Rule, $\land$-Rule, or $\forall$-Rule are all dealt with similarly, modifying the corresponding argument for Theorem \ref{ProofIsStrategy1} straightforwardly.
   As for $p\Sigma_{k+1}(X)$-Cut and $p\Sigma_{k+1}(X)$-Induction, follow the $p\Sigma_{1}(X)$-counterparts in the proof of Theorem \ref{ProofIsStrategy1}.

      \item The case when  $\lambda(V[\vec{P}^{(k+1)}](v))=\Gamma$ is derived by $\exists$-Rule: 
   \begin{prooftree}
 \AxiomC{$\Gamma, \varphi(u)$}
  \RightLabel{\quad (where $\exists x. \varphi(x) \in \Gamma$)}
  \UnaryInfC{$\Gamma$}
 \end{prooftree}
 We split cases according to the form of $\exists x. \varphi(x)$.
  \begin{enumerate}
  \item If $\exists x. \varphi(x)$ is $X$-free or a $\Delta_{0}(X)$-formula,
  then move analogously to Case (\ref{alreadydetermined}) in the proof of Theorem \ref{ProofIsStrategy1}.
   \item The case when 
   \begin{align*}
       \exists x. \varphi(x) \equiv \exists x_0\forall x_1. ( x_0 \in X \land (\overline{x_1 \in X} \lor \overline{x_{1} \prec x_{0}}) 
   \end{align*}
  is analogous to Case (\ref{subformulaofTI}) in the proof of Theorem \ref{ProofIsStrategy1}.

   \item Otherwise, $(V[\vec{P}^{(k+1)}](v),\exists x. \varphi(x))$ is an ancestor of some $(V[\vec{P}^{(k+1)}](w*n), \Psi)$, where $\Psi$ is eliminated by $p\Sigma_{k+1}(X)$-Induction or $p\Sigma_{k+1}(X)$-Cut deriving $\lambda(V[\vec{P}^{(k+1)}](w))$. 
   The latter case is simpler than the former, so we focus on the former case.
   
   Suppose $\lambda(V[\vec{P}^{(k+1)}](w))=\Delta$ be derived by the following $p\Sigma_{k+1}(X)$-Induction:
 \begin{prooftree}
 \AxiomC{$\Delta, \psi(0)$}
 \AxiomC{$\Delta, \overline{\psi(a)},\psi(a+1)$}
  \AxiomC{$\Delta, \overline{\psi(t)}$}\RightLabel{\quad ($\psi \in p\Sigma_{k+1}(X)$)}
  \TrinaryInfC{$\Delta$}
 \end{prooftree}
   
 $\Psi$ is either $\psi(0)$, $\overline{\psi(a)}$, $\psi(a+1)$, or $\overline{\psi(t)}$.

   \begin{enumerate}
    \item If $\exists x. \varphi(x)$ is $p\Sigma_{k+1}(X)$, then $\Psi \equiv \exists x. \varphi(x)$, and it is $\psi(0)$ or $\psi(a+1)$.
    
    Set $f_{1}(\vec{P}^{(k+1)}):=\emptyset$.
 Let $\rho'$ be \textbf{Delayer}'s answer.
 Define
 \begin{align}\label{proceedtotheright}
 f_{2}(\vec{P}^{(k+1)},\emptyset,\rho'):= \left\langle 1, w \right\rangle.
 \end{align}
 (We will check that \textbf{Prover} does not lose by this option below.)

 As for $V,E,W$, we split the cases according to the form of $\exists x. \varphi(x)$.    
 Let $E'$ be the restriction of $E[\vec{P}^{(k+1)}]$ to $fv(\Delta)$ and $W'$ be the restriction of $W[\vec{P}^{(k+1)}]$ to formulae in $\Delta$ in the following. 
    
   We first consider the subcase $\exists x. \varphi(x) \equiv \psi(0)$.
   By $(\dag\dag)$ (\ref{doubledagarInduction}) of Induction Hypothesis, $w*0 \subseteq v$, $\lambda(V[\vec{P}^{(k+1)}](w*0))=\Delta\cup\{\psi(0)\}$, and $w*1 \in T$ by $(\dag\dag)$:
       $$
\infer{\Delta \quad (@ V[\vec{P}^{(k+1)}](w))}{
   \infer*{\Delta, \boxed{\psi(0)} \quad (@ V[\vec{P}^{(k+1)}](w*0))}{
   	\infer{\Gamma \quad (@ V[\vec{P}^{(k+1)}](v))}{
	     \infer{\Gamma,\boxed{\varphi(u)} \quad (@ V[\vec{P}^{(k+1)}](v*0)}{
	     \vdots}
	    }	
   }
   &
	\infer*{\Delta, \overline{\psi(a)},\psi(a+1)}{
   }
	&
	\infer*{\Delta, \overline{\psi(t)}}{	    
	}   
}
$$
   
   In particular, \textbf{Prover} does not lose by the option (\ref{proceedtotheright}).
   
 For the next position $P^{(k+1)}_{\star}$ with $T_{k+1}(P^{(k+1)}_{\star}) = T'$, set 
 \begin{align*}
 V[\vec{P}^{(k+1)}P^{(k+1)}_{\star}] &:=V[\vec{P}^{(k+1)}]\restriction T',\\
      E[\vec{P}^{(k+1)}P^{(k+1)}_{\star}] &:= \begin{cases}
      E' &\mbox{(if $E[\vec{P}^{(k+1)}](t)=0$)}\\
      E' \sqcup \{a \mapsto 0\} &\mbox{(otherwise)}
      \end{cases},\\
     W[\vec{P}^{(k+1)}P^{(k+1)}_{\star}] &:= 
     \begin{cases}
     W' \sqcup \{\overline{\psi(t)} \mapsto E[\vec{P}^{(k+1)}](u)\} &\mbox{(if $E[\vec{P}^{(k+1)}](t)=0$)}\\
     W' \sqcup \{\overline{\psi(a)} \mapsto E[\vec{P}^{(k+1)}](u)\} &\mbox{(otherwise)}
     \end{cases}.
 \end{align*}
Note that
\[\lambda(V[\vec{P}^{(k+1)}](w*1))=
\begin{cases}
\Delta \cup \{\overline{\psi(t)}\}&\mbox{(if $E[\vec{P}^{(k+1)}](t)=0$)}\\
\Delta \cup \{\overline{\psi(a)}, \psi(a+1)\}  &\mbox{(otherwise)}
\end{cases}.\]

Now, we consider the subcase $\exists x. \varphi(x) \equiv \psi(a+1)$.
 Let $w*n \subseteq v$.
 By $(\dag\dag)$ (\ref{doubledagarInduction}) of Induction Hypothesis, $n \in [1,E[\vec{P}^{(k+1)}](t)]$, 
 \[\lambda(V[\vec{P}^{(k+1)}](w*n))=\Delta\cup \{ \overline{\psi(a)},\psi(a+1)\},\] 
 $E[\vec{P}^{(k+1)}](a) = n-1$, and $w*(n+1) \in T$:
       $$
\infer{\Delta \quad (@ V[\vec{P}^{(k+1)}](w))}{
   \infer*{\Delta, \psi(0)}{
   		    }	
   &
	\infer*{\Delta, \overline{\psi(a)},\boxed{\psi(a+1)} \quad (@ V[\vec{P}^{(k+1)}](w*n))}{
	\infer{\Gamma \quad (@ V[\vec{P}^{(k+1)}](v))}{
	     \infer{\Gamma,\boxed{\varphi(u)} \quad (@ V[\vec{P}^{(k+1)}](v*0))}{
	     \vdots}
       } 
   }
	&
	\infer*{\Delta, \overline{\psi(t)} \quad \left(@ V[\vec{P}^{(k+1)}](w*(e+1))\right)}{	    
	}   
}
$$
(where $e:=E[\vec{P}^{(k+1)}](t)$.)
 In particular, \textbf{Prover} does not lose by choosing the option (\ref{proceedtotheright}).

 If $n=e$, then $\lambda(V[\vec{P}^{(k+1)}](w*(n+1)))=\Delta \cup \{\overline{\psi(t)}\}$.
  For the next position $P^{(k+1)}_{\star}$ with $T(P^{(k+1)}_{\star}) = T'$, set 
 \begin{align*}
 V[\vec{P}^{(k+1)}P^{(k+1)}_{\star}] &:=V[\vec{P}^{(k+1)}]\restriction T',\\
      E[\vec{P}^{(k+1)}P^{(k+1)}_{\star}] &:= E' \\
     W[\vec{P}^{(k+1)}P^{(k+1)}_{\star}] &:= W' \sqcup \{\overline{\psi(t)} \mapsto E[\vec{P}^{(k+1)}](u)\}.
 \end{align*}

 If $n < e$, then $\lambda(V[\vec{P}^{(k+1)}](w*(n+1)))=\Delta \cup \{\overline{\psi(a)}, \psi(a+1)\}$.
 For the next position $P^{(k+1)}_{\star}$ with $T(P^{(k+1)}_{\star}) = T'$, set 
 \begin{align*}
 V[\vec{P}^{(k+1)}P^{(k+1)}_{\star}] &:=V[\vec{P}^{(k+1)}]\restriction T',\\
     E[\vec{P}^{(k+1)}P^{(k+1)}_{\star}] &:= E' \sqcup \{a \mapsto n\},\\
     W[\vec{P}^{(k+1)}P^{(k+1)}_{\star}] &:= W' \sqcup \{\overline{\psi(a)} \mapsto E[\vec{P}^{(k+1)}](u)\}.
 \end{align*}
 
   \item\label{intermediatebacktrack} Now, consider the case when $\exists x. \varphi(x)$ is $p\Sigma_{i}(X)$ ($i \in [2,k]$).
   In this case, there further exists $w'$ such that:
   \begin{itemize}
   \item $w \subsetneq w' \subsetneq v$,
   \item $(V[\vec{P}^{(k+1)}](v), \exists x. \varphi(x))$ is an ancestor of $(V[\vec{P}^{(k+1)}](w'), \chi)$, where
   \begin{align*}
   \chi\equiv\forall y \exists x. \varphi(x,y) &\in \lambda(V[\vec{P}^{(k+1)}](w')),\\
   \exists x. \varphi(x,y) &\equiv \exists x. \varphi(x),
   \end{align*}
   and $\forall$-Rule is used to derive $\lambda(V[\vec{P}^{(k+1)}](w'))$, eliminating $\exists x. \varphi(x,a')$.
    Here, $a'$ is the eigenvariable.

   \item $(V[\vec{P}^{(k+1)}](w'), \chi)$ is an ancestor of $(V[\vec{P}^{(k+1)}](w), \Psi)$.
   \end{itemize}
   
   By $(\dag\dag)$ (\ref{doubledagarInduction}) of Induction Hypothesis, there exist $r<l_{k+2-i-1}$ and $s < l_{k+1}$ such that:
   \begin{itemize}
   \item $P^{(k+2-i-1)}=e^{i}(P^{(k+1)}_{s})$.
  \item $\lambda(V[\vec{P}^{(k+1)}_{\leq s}](c(T_{s})))=\Theta$ is derived by $\exists$-rule of the following form:
     \begin{prooftree}
 \AxiomC{$\Theta, \forall x . \overline{\varphi(x,u')}$}\RightLabel{($\exists y \forall x . \overline{\varphi(x,y)} \in \Theta$)}
    \UnaryInfC{$\Theta$}
 \end{prooftree}
 where $\NN \models E[\vec{P}^{(k+1)}_{\leq s}](\exists x .\varphi(x,u'))\leftrightarrow E[\vec{P}^{(k+1)}](\exists x. \varphi(x,W[\vec{P}^{(k+1)}](\forall y \exists x.\varphi(x,y))))$.
 \end{itemize}
 
 The following figure represents the situation in case when $i=k$ and $w*1 \subseteq v$ (and therefore $\Psi \equiv \overline{\psi(a)} \equiv \forall y \exists x. \varphi(x,y)$):
       $$
\infer{\Delta \quad (@ V[\vec{P}^{(k+1)}](w))}{
   \infer*{\Delta, \psi(0) \quad (@ V[\vec{P}^{(k+1)}](w*0))}{
     \infer{\Theta \quad (@ V[\vec{P}^{(k+1)}_{\leq s}](c(T_{s})))}{
      \infer{\Theta, \forall x. \overline{\varphi(x,u')}}{
       \vdots}
       }
    }	
   &
	\infer*{\Delta, \boxed{\overline{\psi(a)}},\psi(a+1) \quad (@ V[\vec{P}^{(k+1)}](w*1))}{
	\infer{\cdots, \boxed{\forall y \exists x. \varphi(x,y)} \quad (@ V[\vec{P}^{(k+1)}](w'))}{
	  \infer*{\cdots, \boxed{\forall y \exists x. \varphi(x,y)}, \boxed{\exists x. \varphi(x,a')} }{
	   \infer{\Gamma \quad (@ V[\vec{P}^{(k+1)}](v))}{
	     \infer{\Gamma,\boxed{\varphi(u)} \quad (@ V[\vec{P}^{(k+1)}](v)*0)}{
	     \vdots}
       } 
	  }
	}
	}
	&
	\infer*{\Delta, \overline{\psi(t)}}{	    
	}   
}
$$
 
 Set $f_{1}(\vec{P}^{(k+1)}):=\emptyset$.
 Let $\rho'$ be \textbf{Delayer}'s answer.
 Define
 \[f_{2}(\vec{P}^{(k+1)},f_1(\vec{P}^{(k+1)}),\rho'):= \left\langle k+2-i, r \right\rangle.\]
   
   Note that 
   \[height(c(T_{(k+2-i-1)}(P^{(k+2-i-1)}_{r}))) = height(V[\vec{P}^{(k+1)}_{\leq s}](c(T_{s}))) < h,\]
    and therefore \textbf{Prover} does not lose by this option.
    
    For the next position $P^{(k+1)}_{\star}$, 
    \begin{itemize}
     \item let $V[\vec{P}^{(k+1)}P^{(k+1)}_{\star}]$ be the extension of $V[\vec{P}^{(k+1)}]$ mapping the child of $c(T_{s})$ to the unique child of $V[\vec{P}^{(k+1)}_{\leq s}](c(T_{s}))$.
     \item define $E[\vec{P}^{(k+1)}P^{(k+1)}_{\star}]:=E[\vec{P}^{(k+1)}_{\leq s}]$.
     \item Set $W[\vec{P}^{(k+1)}P^{(k+1)}_{\star}] := W[\vec{P}^{(k+1)}_{\leq s}] \sqcup \{\forall x. \overline{\varphi(x,u')} \mapsto E[\vec{P}^{(k+1)}](u)\}$.
    \end{itemize}

% \begin{align*}
% V^{(j)}_{l_{j}+1}[\vec{P}^{(k+1)}P^{(k+1)}_{\star}] &:=V^{(j)}_{l_{j}}[\vec{P}^{(k+1)}P^{(k+1)}_{\star}]\restriction T',\\
 %    E^{(j)}_{l_{j}+1}[\vec{P}^{(k+1)}P^{(k+1)}_{\star}] &:= E' \sqcup \{a \mapsto n+1\},\\
 %    W^{(j)}_{l_{j}+1}[\vec{P}^{(k+1)}P^{(k+1)}_{\star}] &:= W' \sqcup \{\overline{\psi(a)} \mapsto E[\vec{P}^{(k+1)}](u)\}.
 %\end{align*}

   \item Lastly, consider the case when $\exists x. \varphi(x)$ is $p\Sigma_{1}(X)$. In particular, $\varphi(u)$ is $\Delta_0(X)$, and therefore there exists the minimum finite subset $Q\subseteq \NN$ such that any finite partial predicate covering $Q$ determines the truth value of $\varphi(u)$ under the assignment $E[\vec{P}^{(k+1)}]$.
  Set $f_1(\vec{P}^{(k+1)}) := Q$.
  
 Let $\rho'$ be \textbf{Delayer}'s answer, and without loss of generality, we may assume that $\rho \subseteq \rho'$. Then either:
\[(E[\vec{P}^{(k+1)}], \rho') \Vdash \varphi(u) \ \mbox{or} \ (E[\vec{P}^{(k+1)}], \rho') \Vdash \overline{\varphi(u)}.\]
In the latter case, set $f_{2}(\vec{P}^{(k+1)},f_1(\vec{P}^{(k+1)}),\rho'):= \left\langle 0, 0 \right\rangle$, and define 
\[V[\vec{P}^{(k+1)}P^{(k+1)}_{\star}],E[\vec{P}^{(k+1)}P^{(k+1)}_{\star}],W[\vec{P}^{(k+1)}P^{(k+1)}_{\star}]\]
 similarly to the case of True Sentence.
 
 In the former case, backtrack as in the previous item (\ref{intermediatebacktrack}), formally putting $i:=1$.
 \end{enumerate}
 \end{enumerate}
 \end{enumerate}
 
 This completes the description of \textbf{Prover}'s strategy. 
 Since \textbf{Prover} can continue the play as long as \textbf{Delayer} can make a move, \textbf{Prover}'s strategy above is a winning one.

\end{proof}

Together with Proposition \ref{DelayerwinsGk+1}, we obtain the following:
\begin{cor}
Let $k \geq 1$.
 If $\pi=(\tau,\lambda) \colon I\Sigma_{k+1}(X) \vdash TI(\prec)$ and $|{\prec}| > 2_{k}(\omega^{h+1}) \cdot 2 +\omega$, then $height(\pi) > h$.

   In particular, if $|{\prec}| \geq \omega_{k+2}$, then $I\Sigma_{k+1}(X) \not\vdash TI(\prec)$.

\end{cor}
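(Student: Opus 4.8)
The plan is to derive both conclusions purely by pitting the two opposite-direction results already in hand against the determinacy of the game, and then supplying one short ordinal-arithmetic computation for the unprovability clause. On the one side, Theorem \ref{ProofIsStrategyGk} turns a derivation into a \textbf{Prover} win; on the other, Proposition \ref{DelayerwinsGk+1} turns a long enough $\prec$ into a \textbf{Delayer} win. Since $\mathcal{G}_{k+1}(\prec,h)$ is determined, these cannot both hold for the same parameter, and that clash is the entire combinatorial content of the corollary. No new game analysis is required.

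For the height bound, suppose toward a contradiction that $\pi\colon I\Sigma_{k+1}(X)\vdash TI(\prec)$ with $|{\prec}| > 2_{k}(\omega^{h+1})\cdot 2+\omega$ yet $height(\pi)\leq h$. Put $h':=height(\pi)$. The map $j\mapsto 2_{k}(\omega^{j+1})\cdot 2+\omega$ is nondecreasing (ordinal exponentiation, the tower $2_{k}$, right multiplication by $2$, and $+\,\omega$ each preserve $\leq$), so $h'\leq h$ gives $|{\prec}| > 2_{k}(\omega^{h'+1})\cdot 2+\omega$. By Theorem \ref{ProofIsStrategyGk} applied to $\pi$, \textbf{Prover} wins $\mathcal{G}_{k+1}(\prec,h')$. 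Since $TI(\prec)$ consists of two quantified formulae and so is not an Initial Sequent, the root of $\tau$ cannot be a leaf, whence $h'=height(\pi)\geq 1>0$; thus Proposition \ref{DelayerwinsGk+1} applies at parameter $h'$ and gives \textbf{Delayer} a winning strategy for the very same game. As this game ends in finitely many steps with a declared winner, both players winning is impossible, a contradiction. Hence $height(\pi)>h$.

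For the unprovability clause I would show the thresholds are cofinal in and bounded by $\omega_{k+2}$. Concretely I plan to prove by induction on $k$ the two claims: (a) $2_{k}(\omega^{h+1})<\omega_{k+2}$ for every finite $h$; and (b) $\sup_{h<\omega}2_{k}(\omega^{h+1})=\omega_{k+2}$. The base $k=0$ is $\sup_{h}\omega^{h+1}=\omega^{\omega}=\omega_{2}$. For the step, (a) at $k$ yields $2_{k+1}(\omega^{h+1})=2^{\,2_{k}(\omega^{h+1})}\leq\omega^{\,2_{k}(\omega^{h+1})}<\omega^{\omega_{k+2}}=\omega_{k+3}$, giving (a) at $k+1$; and since $x\mapsto 2^{x}$ is continuous and $h\mapsto 2_{k}(\omega^{h+1})$ is strictly increasing, (b) at $k$ gives $\sup_{h}2^{\,2_{k}(\omega^{h+1})}=2^{\omega_{k+2}}$. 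The single identity doing the real work is $2^{\omega^{\delta}}=\omega^{\omega^{\delta}}$ for $\delta\geq\omega$, proved from $1+\delta=\delta$ via $2^{\omega^{\delta}}=2^{\omega\cdot\omega^{\delta}}=(2^{\omega})^{\omega^{\delta}}=\omega^{\omega^{\delta}}$; applied with $\delta=\omega_{k+1}$ it gives $2^{\omega_{k+2}}=\omega^{\omega_{k+2}}=\omega_{k+3}$, closing (b). Granting this, if $|{\prec}|\geq\omega_{k+2}$ and $I\Sigma_{k+1}(X)\vdash TI(\prec)$ via some $\pi$ with $h'':=height(\pi)$ finite, then by (a) we have $|{\prec}|\geq\omega_{k+2}>2_{k}(\omega^{h''+1})\cdot 2+\omega$, so the first part applied with $h:=h''$ forces $height(\pi)>h''=height(\pi)$, which is absurd; hence $I\Sigma_{k+1}(X)\not\vdash TI(\prec)$.

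I expect the only genuinely delicate point to be the ordinal computation in the last paragraph — verifying $2^{\omega^{\delta}}=\omega^{\omega^{\delta}}$ at limit exponents and confirming that the suprema equal $\omega_{k+2}$ exactly rather than merely bounding it from below. Everything else reduces to the direct contradiction between Theorem \ref{ProofIsStrategyGk} and Proposition \ref{DelayerwinsGk+1} via determinacy, together with the trivial monotonicity of the threshold function.
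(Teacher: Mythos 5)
Your proposal is correct and follows essentially the route the paper intends for this corollary: the paper obtains it by pitting Theorem \ref{ProofIsStrategyGk} (whose proof gives \textbf{Prover} a winning strategy at $h'=height(\pi)$) against Proposition \ref{DelayerwinsGk+1} under the determinacy of $\mathcal{G}_{k+1}$, exactly as you do, with the estimate $2_{k}(\omega^{h+1})\cdot 2+\omega<\omega_{k+2}$ yielding the second clause. The only step you elide is that passing from your claim (a) to the full inequality $2_{k}(\omega^{h''+1})\cdot 2+\omega<\omega_{k+2}$ uses the additive indecomposability of $\omega_{k+2}=\omega^{\omega_{k+1}}$, which is immediate since its exponent is a power of $\omega$.
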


\section{Acknowledgement}
The author is deeply grateful to Toshiyasu Arai for his patient and sincere support, numerous constructive suggestions, and for his leading my attention to ordinal analysis. 
We also thank Mykyta Narusevych, Jan Kraj\'{i}\v{c}ek, Neil Thapen, Erfan Khaniki, and Pavel Pudl\'{a}k for their comments.
This work is supported by JSPS KAKENHI Grant Number 22KJ1121, Grant-in-Aid for JSPS Fellows, and
FoPM program at the University of Tokyo.

\section{Appendix}\label{Appendix}
\subsection{The observation in Proposition \ref{DelayerwinsG1} is tight}
\begin{prop}\label{ProverwinsG1}
Let $h > 0$. If $|{\prec}| \leq \omega^{h+1}+\omega$, then \textbf{Prover} has a winning strategy for $\mathcal{G}_{1}(\prec,h)$.
\end{prop}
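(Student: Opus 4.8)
The plan is to exhibit an explicit winning strategy for \textbf{Prover} that mirrors \textbf{Delayer}'s window strategy from Proposition \ref{DelayerwinsG1}. As there, I identify each natural number with its order type under $\prec$, so that the elements conceded to $X$ become ordinals below $|{\prec}|\le\omega^{h+1}+\omega$, and I use the scaled clock $\widetilde O_1(T)=\omega\cdot T$, which by Lemma \ref{clockismonotone} decreases at every transition. Writing $m:=\min_{\prec}(\rho^{-1}(1))$ for the $\prec$-least element currently conceded to lie in $X$ (read as an ordinal), \textbf{Prover} maintains, after each completed round, the invariant
\[
m<\widetilde O_1(T)=\omega\cdot T .
\]
The opening move will convert the only available starting guarantee, $m_0<\omega^{h+1}+\omega$, into this invariant; this is the sole place where the hypothesis $|{\prec}|\le\omega^{h+1}+\omega$ is used.

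A single round runs as follows. Given $m<\omega\cdot T$, let $q<T$ be the ordinal with $m\in[\omega q,\ \omega q+\omega[$; then the tail $[\omega q,\ m[$ is \emph{finite}, so \textbf{Prover} queries $Q=\{m\}\cup[\omega q,\ m[$. Because $\rho(m)=1$, \textbf{Delayer} must answer $\rho'(m)=1$ on pain of immediately contradicting $\rho$, and the answer clause of Definition \ref{DefG1} then forces a conceded $p\prec m$; as every element of $[\omega q,\ m[$ has just been queried, no such $p$ can have order type in $[\omega q,\ m[$, so the new minimum satisfies $m'<\omega q$. \textbf{Prover} now spends this round (and, if necessary, a few follow-up rounds with empty queries, during which $m'$ is frozen) stepping the clock down by legal deepening/backtracking moves to a value $T'$ with $q\le T'<T$; this keeps $m'<\omega q\le\omega\cdot T'$, so the invariant is restored. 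The opening move is the same computation at $T=\omega^h$, collapsing the top block $[\omega^{h+1},\ m_0[$, which is finite precisely by the hypothesis on $|{\prec}|$.

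Termination and victory follow from well-foundedness. The ordinals $\widetilde O_1(T)$ strictly decrease along the play, so it is finite; and since the invariant forces the $\omega$-quotient of $m$ below $T$, by the time the clock is brought down to its least value $1$ the invariant reads $m<\omega\cdot 1=\omega$, making $m$ a genuine finite ordinal (indeed \textbf{Prover} wins at the first round where $m<\omega$). At such a round \textbf{Prover} queries $Q=\{m\}\cup[0,\ m[$, a finite set; \textbf{Delayer} must keep $m$ conceded yet produce a conceded $p\prec m$, which is impossible since every predecessor of $m$ was queried. So \textbf{Delayer} contradicts $\rho$ and \textbf{Prover} wins. The extra summand $\omega$ is exactly the single top block consumed by the opening move; for $|{\prec}|>\omega^{h+1}+\omega$ it cannot be collapsed and Proposition \ref{DelayerwinsG1} hands the game to \textbf{Delayer}, so $\omega^{h+1}+\omega$ is the sharp dividing line.

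I expect the heart of the work to lie in two combinatorial verifications, both dual to the monotonicity computation already carried out for Lemma \ref{clockismonotone} and Proposition \ref{DelayerwinsG1}. First, the \emph{reachability} of the clock: \textbf{Prover} must be able to step $T$ down to a prescribed value in the window $[q,\ T[$ without overshooting below $q$, and it is here that the full height $h$ of the comb is needed, so as to follow \textbf{Delayer}'s refuelling through the successive powers of $\omega$. Second, the quotient bookkeeping ensuring that $m$ becomes finite exactly when the clock bottoms out, which is what locks the threshold at $\omega^{h+1}+\omega$ rather than a rounder ordinal.
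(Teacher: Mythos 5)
Your overall plan---maintain an invariant tying the least conceded element $m$ to the clock, force $m$ below $\omega q$ by querying the finite block between $\omega q$ and $m$, then re-position the clock inside $\left[q,T\right[$---is in essence the same strategy the paper uses, and your threshold analysis is right. But the proposal leaves its central step unproven, and you say so yourself: the ``reachability'' claim that \textbf{Prover} can always legally step the clock to some $T'$ with $q\le T'<T$. This is not a routine verification dual to Lemma \ref{clockismonotone}: which clock values are reachable depends on the \emph{shape} of the comb, not merely on its ordinal value. Deepening ($o=0$) is illegal once $c(T)$ has height $h$, and backtracking ($o=1$) at a given level is legal only if the corresponding node of the leftmost path already has a right sibling in $T$---otherwise \textbf{Prover} loses on the spot. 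So one must prove, as part of the induction, that the comb \textbf{Prover} has built always contains the siblings needed, and your invariant $m<\omega\cdot T$ is too coarse to record this. The paper's proof does exactly that bookkeeping: writing $\min\rho^{-1}(1)=\omega^{h}k_{1}+\cdots+\omega^{0}k_{h+1}$, it maintains that at each level $l$ of the leftmost path the tree contains the siblings $(i_1,\ldots,i_{l-1},i_l+j)$ for all $j\in[0,k_l+1]$; then the prescribed move (deepen when the first coefficient that dropped lies at or below the current height, backtrack to that level otherwise) is legal by construction. Alternatively your gap can be closed directly, but it needs an argument: if $height(c(T))<h$, a single move $\langle 0,b\rangle$ with $b$ large lands in $\left[q,T\right[$, because the reachable values $T-\omega^{h-g}+(b+1)\cdot\omega^{h-g-1}$ are cofinal in $T$; if $height(c(T))=h$, then $T$ is a successor ordinal (the leftmost leaf contributes $\omega^{0}=1$), and cutting at the deepest node of the leftmost path possessing a right sibling (such a node exists whenever $T>1$) decreases the clock by exactly $1$, hence into $\left[q,T\right[$. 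Either way, something must be proved where you have only stated an expectation; that expectation is the heart of the proposition.

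Two smaller points. First, your justification of the forced drop is off: after the query $\{m\}\cup\left[\omega q, m\right[$, \textbf{Delayer} may perfectly well concede elements of $\left[\omega q,m\right[$; what forces the new minimum below $\omega q$ is the finite-chain argument (every conceded element of the queried set needs a strictly $\prec$-smaller conceded element, and a strictly decreasing chain must exit the finite queried interval), and the same repair is needed in your endgame with $Q=[0,m]$. Second, in your ``follow-up rounds with empty queries'' the value $m'$ is \emph{not} frozen: \textbf{Delayer}'s answer only has to cover $Q$ and may volunteer further concessions. This happens to be harmless---extra concessions can only lower the minimum, preserving $m'<\omega q$---but it should be said, since your invariant is stated in terms of that minimum.
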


\begin{proof}
The following strategy of \textbf{Prover} suffices:
\begin{enumerate}
 \item First, \textbf{Prover} forces \textbf{Delayer} to answer $\rho(x)=1$ for some $x < \omega^{h+1}$.
 Without loss of generality, we may assume that the initial position is of the form: 
 \[(T_0, \{\omega^{h+1} + n \mapsto 1\}).\]
 Then cast the query $Q:=[\omega^{h+1},\omega^{h+1} + n]$.
 Let $\rho'$ be \textbf{Delayer}'s answer.
 Then $(\rho')^{-1}(1)$ must include an element less than $\omega^{h+1}$.
 Suppose 
 \[\min (\rho')^{-1}(1) = \omega^{h}\cdot k_{1} + \cdots + \omega^1 \cdot k_{h}+\omega^0 \cdot k_{h+1} \quad (\forall i.\ k_i< \omega).\]
 Choose the option $\langle 0,k_{i}+1 \rangle$.
 Note that the next position $(T',\rho'\cup\rho)$ satisfies $height(T')=1$.
 \item Suppose the current position is $(T,\rho)$, 
 \[\min \rho^{-1}(1) = \omega^{h}\cdot k_{1} + \cdots + \omega^1 \cdot k_{h}+\omega^0 \cdot k_{h+1} \quad (\forall i.\ k_i< \omega),\]
 $g=height(T)\in [1,h]$,
 $c(T) = (i_1, \ldots, i_{g})$, and 
 \[(i_1, \ldots, i_{l-1},i_l + j) \in T \quad (\forall l \in [1,g] \forall j \in [0,k_l+1]).\]

 Then cast the query $[\omega^{h}\cdot k_{1} + \cdots + \omega^1 \cdot k_{h}, \min \rho^{-1}(1)]$.
 If \textbf{Delayer} cannot answer, \textbf{Prover} wins.
 Hence, suppose \textbf{Delayer} answered $\rho'$ and let $m:=\min \dom(\rho')$.
 Similarly to the previous item, we have $m < \omega^{h}\cdot k_{1} + \cdots + \omega^1 \cdot k_{h}$.
 Write 
 \[m = \omega^{h}\cdot k_{1} + \cdots + \omega^{h-e+1}\cdot k_{e} + \omega^{h-e}\cdot l_{e+1}+ \cdots +\omega^0 \cdot l_{h+1} \quad (e+1 \leq h,\ l_{e+1}<k_{e+1}).\]

 Now, choose an option as follows:
 \begin{enumerate}
     \item If $e > g$, play $\left\langle 0, k_g+1\right\rangle$. 
     \item If $e \leq g$, play $\left\langle 1, \ceil{(i_1,\ldots,i_e)}\right\rangle$.
 \end{enumerate}
 Note that \textbf{Prover} does not lose by taking these options because of the assumption, and the next position again satisfies the corresponding assumption.
\end{enumerate}
In this way, \textbf{Prover} can continue playing without losing, and therefore  \textbf{Prover} wins eventually by Lemma \ref{G1determined}.
\end{proof}

\end{document}